\documentstyle[amssymb,amsfonts,12pt]{amsart}

\newtheorem{theorem}{Theorem}[section]
\newtheorem{lemma}[theorem]{Lemma}

\newtheorem{proposition}[theorem]{Proposition}
\newtheorem{definition}[theorem]{Definition}

\newtheorem{question}[theorem]{Question}\newtheorem{conjecture}[theorem]{Conjecture}

\theoremstyle{remark}

\newtheorem*{ack*}{Acknowledgment}

\textwidth16cm
\topmargin0cm
\oddsidemargin0cm
\evensidemargin0cm
\textheight22.5cm
\def\QSet{\mbox{\rm\kern.24em
\vrule width.03em height1.48ex depth-.051ex \kern-.26em Q}}

\def\x{{\bf x}}
\def\T{{\mathbb T}}

\def\R{{\mathbb R}}
\def\E{{\mathbb E}}
\def\N{{\mathbb N}}
\def\C{{\mathbb C}}

\def\Z{{\mathbb Z}}
\def\P{{\mathcal P}}
\def\H{{\mathcal T}}
\def\W{{\mathcal L}}
\def\A{{\mathcal N}}
\def\dist{{\operatorname{dist}}}
\def\supp{{\operatorname{supp}}}
\def\bas{\begin{align*}}
\def\eas{\end{align*}}
\def\bi{\begin{itemize}}
\def\ei{\end{itemize}}
\newenvironment{proof}{\noindent {\bf Proof} }{\endprf\par}
\def \endprf{\hfill  {\vrule height6pt width6pt depth0pt}\medskip}
\def\emph#1{{\it #1}}

\begin{document}

\title[Incidence theory and restriction estimates]{Incidence theory and discrete restriction estimates}

\author{Ciprian Demeter}
\address{Department of Mathematics, Indiana University, 831 East 3rd St., Bloomington IN 47405}
\email{demeterc@@indiana.edu}

\keywords{incidence theory, restriction estimates}
\thanks{The author is partially supported by  the NSF Grant DMS-1161752}
\begin{abstract}
We investigate the interplay between the discrete restriction phenomenon and incidence theory. Two angles are explored. One is a refinement of the machinery developed by Thomas Wolff, which when combined with a recent subcritical estimate of Bourgain leads to new Strichartz estimates for irrational tori. The other one connects various additive energies with the Szemer\'edi-Trotter Theorem.  The combination of the two approaches  recovers  the best known Strichartz estimates for the classical torus in dimensions $n\ge 3$, without any appeal to number theory.
\end{abstract}
\maketitle

\section{The discrete restriction phenomenon}
\bigskip

We start by recalling the Stein-Tomas Theorem  for  the sphere $S^{n-1}$ and the truncated paraboloid
$$P^{n-1}:=\{(\xi_1,\ldots,\xi_{n-1},\xi_1^2+\ldots+\xi_{n-1}^2)\in\R^n:\;|\xi_i|\le 1/2\}.$$
Throughout the whole paper we will implicitly assume $n\ge 2$.
\begin{theorem}
Let $S$ be either $S^{n-1}$ or $P^{n-1}$ and let $d\sigma$ denote its natural surface measure. Then for $p\ge \frac{2(n+1)}{n-1}$ and $f\in L^2(S,d\sigma)$ we have
$$\|\widehat{fd\sigma}\|_{L^p(\R^n)}\lesssim\|f\|_{L^2(S)}.$$
\end{theorem}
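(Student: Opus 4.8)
The plan is the classical $TT^*$ argument combined with analytic interpolation. Put $Tf=\widehat{fd\sigma}$, so that $T\colon L^2(S,d\sigma)\to L^p(\R^n)$ is exactly the operator whose boundedness is asserted. Since $\|T\|=\|TT^*\|^{1/2}$ and a direct computation gives $TT^*F=F*\widehat{d\sigma}$ (up to a harmless reflection and constant), it suffices to prove the single endpoint bound
$$\|F*\widehat{d\sigma}\|_{L^p(\R^n)}\lesssim\|F\|_{L^{p'}(\R^n)},\qquad p=\frac{2(n+1)}{n-1};$$
all larger exponents then follow by interpolating this with the trivial estimate $\|F*\widehat{d\sigma}\|_{L^\infty}\le\|\widehat{d\sigma}\|_{L^\infty}\|F\|_{L^1}$. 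The sphere carries a smooth surface measure already; for the truncated paraboloid a routine argument lets one replace $d\sigma$ by $\psi\,d\xi'$ with $\psi\in C_c^\infty$. In either case the essential analytic input is the stationary-phase decay bound
$$|\widehat{d\sigma}(\xi)|\lesssim(1+|\xi|)^{-(n-1)/2},$$
which holds because $S^{n-1}$ and the graph part of $P^{n-1}$ have everywhere non-vanishing Gaussian curvature.

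To see where the exponent comes from, and to dispose of the open range $p>\frac{2(n+1)}{n-1}$ directly, decompose $\widehat{d\sigma}=\sum_{j\ge0}K_j$ with $K_j$ frequency-localized to $|\xi|\sim2^j$ for $j\ge1$, while $K_0\in C_c^\infty$ is handled by Young's inequality. The decay bound gives $\|K_j\|_{L^\infty}\lesssim2^{-j(n-1)/2}$, so convolution with $K_j$ maps $L^1\to L^\infty$ with norm $\lesssim2^{-j(n-1)/2}$. On the other hand $\widehat{K_j}$ equals $d\sigma$ convolved with an $L^1$-normalized bump at scale $2^{-j}$, so $\|\widehat{K_j}\|_{L^\infty}\lesssim2^{jn}\sup_x\sigma(B(x,2^{-j}))\lesssim2^{jn}\cdot2^{-j(n-1)}=2^{j}$; hence convolution with $K_j$ maps $L^2\to L^2$ with norm $\lesssim2^{j}$. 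Riesz--Thorin interpolation between these two bounds gives, for $2\le p\le\infty$,
$$\|K_j*F\|_{L^p}\lesssim2^{\,j\left(\frac{n+1}{p}-\frac{n-1}{2}\right)}\|F\|_{L^{p'}},$$
and the geometric series in $j$ converges exactly when $\frac{n+1}{p}<\frac{n-1}{2}$, i.e.\ $p>\frac{2(n+1)}{n-1}$.

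The real obstacle is the endpoint $p=\frac{2(n+1)}{n-1}$, where this series diverges only logarithmically; the standard remedy is Stein's theorem on interpolation of an analytic family of operators. For the sphere, embed $\widehat{d\sigma}$ into the family of Fourier multipliers $\gamma_z(\xi)=(1-|\xi|^2)_+^{z-1}/\Gamma(z)$, which is a locally integrable function for $\Re z>0$ and is continued analytically in $z$; one has $\gamma_0=c\,d\sigma_{S^{n-1}}$, and by the classical formula for the Fourier transform of $(1-|\xi|^2)_+^{\lambda}$ its inverse transform is $\check\gamma_z(x)=c(z)\,|x|^{1-n/2-z}J_{n/2+z-1}(2\pi|x|)$ with $c(z)$ bounded on vertical lines. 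On the line $\Re z=1$ the multiplier $\gamma_z$ is bounded, so $T_zF=F*\check\gamma_z$ satisfies $\|T_z\|_{L^2\to L^2}=1/|\Gamma(z)|\lesssim e^{c|\Im z|}$ (with $c<\pi$) by Plancherel; on the line $\Re z=-\frac{n-1}{2}$ the small- and large-argument asymptotics of $J_\nu$ show that $\check\gamma_z$ is uniformly bounded, whence $\|T_z\|_{L^1\to L^\infty}\lesssim e^{c|\Im z|}$. Since $0=\theta\cdot1+(1-\theta)\bigl(-\frac{n-1}{2}\bigr)$ with $\theta=\frac{n-1}{n+1}$, interpolating $(L^1\to L^\infty,\,L^2\to L^2)$ with weights $(1-\theta,\theta)$ lands precisely at $L^{p'}\to L^p$ with $\frac1p=\frac\theta2=\frac{n-1}{2(n+1)}$, so Stein's theorem yields the endpoint bound for $T_0=c\,(\widehat{d\sigma}\,*\,)$. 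For $P^{n-1}$ one runs the same scheme with the analogous family built from the one-dimensional Riesz distributions $(\xi_n-|\xi'|^2)_+^{z-1}/\Gamma(z)$ against a smooth cutoff in $\xi'$, using stationary phase in $\xi'$ to recover the uniform kernel bound along $\Re z=-\frac{n-1}{2}$. The technical heart is the verification of these two end-line operator bounds with the admissible (sub-exponential in $\Im z$) growth that Stein's theorem requires: for $L^2\to L^2$ this is Stirling's estimate for $1/\Gamma(z)$, and for $L^1\to L^\infty$ it is the corresponding control of Bessel functions of complex order.
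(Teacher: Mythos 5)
The paper does not prove this statement: it is quoted as the classical Stein--Tomas theorem, so there is no internal argument to compare yours against. Your proposal is the standard and correct proof --- the $TT^*$ reduction, the dyadic decomposition of the kernel $\widehat{d\sigma}$ giving the open range $p>\frac{2(n+1)}{n-1}$, and Stein's analytic interpolation with the family $(1-|\xi|^2)_+^{z-1}/\Gamma(z)$ (resp.\ the Riesz distributions $(\xi_n-|\xi'|^2)_+^{z-1}/\Gamma(z)$ for the paraboloid) for the endpoint --- and the exponent bookkeeping checks out. One small slip: in the $TT^*$ framework the variable of $\widehat{d\sigma}$ is the physical variable, so your pieces $K_j$ are spatially (not frequency) localized to $|x|\sim 2^j$; the two estimates you derive for them are nonetheless the correct ones, and the points you flag as the technical heart (Stirling for $1/\Gamma$, uniform Bessel bounds for complex order, and the reduction of the truncated paraboloid to a smooth compactly supported density) are exactly the standard ones.
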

We will use the notation $e(a)=e^{2\pi i a}$. For fixed $p\ge \frac{2(n+1)}{n-1}$, it is an easy exercise to see that this theorem is equivalent with the statement that
$$(\frac{1}{|B_R|}\int_{B_R}|\sum_{\xi\in \Lambda}a_\xi e(\xi \cdot\x)|^{p})^{1/p}\lesssim \delta^{\frac{n}{2p}-\frac{n-1}{4}}\|a_\xi\|_{l^2(\Lambda)},$$
for each $0\le 1<\delta$, each $a_\xi\in\C$, each ball $B_R\subset \R^n$ of radius $R\sim\delta^{-1/2}$  and each $\delta^{1/2}$ separated set $\Lambda\subset S$.  Thus, the Stein-Tomas  Theorem measures the average $L^p$ oscillations of exponential sums at spatial scale equal to the inverse of the separation of the frequencies. It will be good to keep in mind that for each $R\gtrsim \delta^{-1/2}$
\begin{equation}
\label{EE43}
\|\sum_{\xi\in \Lambda}a_\xi e(\xi \cdot\x)\|_{L^2(B_R)}\sim |B_R|^{1/2}\|a_\xi\|_2,
\end{equation}
as can be seen using Plancherel's Theorem.

It has been observed that stronger cancellations occur at the larger scale $R\gtrsim \delta^{-1}$. Based on the evidence so far it seems very reasonable to conjecture the following.
\begin{conjecture}[General discrete restriction]
\label{c1}
Let $\Lambda\subset S$ be a  $\delta^{1/2}$- separated set and let $R\gtrsim \delta^{-1}$. Then for each $\epsilon>0$
\begin{equation}
\label{EE42}
(\frac{1}{|B_R|}\int_{B_R}|\sum_{\xi\in \Lambda}a_\xi e(\xi \cdot\x)|^{p})^{1/p}\lesssim_{\epsilon} \delta^{\frac{n+1}{2p}-\frac{n-1}{4}-\epsilon}\|a_\xi\|_2
\end{equation}
if $p\ge \frac{2(n+1)}{n-1}$, and
\begin{equation}
\label{EE44}
(\frac{1}{|B_R|}\int_{B_R}|\sum_{\xi\in \Lambda}a_\xi e(\xi \cdot\x)|^{p})^{1/p}\lesssim_{\epsilon} \delta^{-\epsilon}\|a_\xi\|_2,
\end{equation}
if $1\le p<  \frac{2(n+1)}{n-1}$.
\end{conjecture}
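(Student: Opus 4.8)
I do not know a proof of Conjecture~\ref{c1} in full; the plan below is the route I would take, and it does at least settle several natural families of $\Lambda$. Write $E_\Lambda a(\x)=\sum_{\xi\in\Lambda}a_\xi e(\xi\cdot\x)$ and $p_c=\frac{2(n+1)}{n-1}$. The first step is to reduce everything to the single endpoint inequality
\begin{equation}\label{PP1}
\Big(\frac1{|B_R|}\int_{B_R}|E_\Lambda a(\x)|^{p_c}\,d\x\Big)^{1/p_c}\lesssim_\epsilon\delta^{-\epsilon}\|a_\xi\|_{l^2(\Lambda)},\qquad R\gtrsim\delta^{-1}.
\end{equation}
Since $d\x/|B_R|$ is a probability measure on $B_R$, log-convexity of $L^p$-norms lets me interpolate \eqref{PP1} against the two trivial bounds $\big(\frac1{|B_R|}\int_{B_R}|E_\Lambda a|^2\big)^{1/2}\sim\|a_\xi\|_2$ (which is \eqref{EE43}) and $\|E_\Lambda a\|_{L^\infty}\le\|a_\xi\|_1\le(\#\Lambda)^{1/2}\|a_\xi\|_2\lesssim\delta^{-(n-1)/4}\|a_\xi\|_2$. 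Interpolating \eqref{PP1} with the $L^2$ bound gives \eqref{EE44} for $2\le p\le p_c$, and the range $1\le p<2$ then follows by H\"older in the probability space; interpolating \eqref{PP1} with the $L^\infty$ bound gives \eqref{EE42} for $p\ge p_c$, with the exponent matching $\frac{n+1}{2p}-\frac{n-1}{4}$ precisely because $p_c=\frac{2(n+1)}{n-1}$. Thus the conjecture is equivalent to \eqref{PP1}.

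Inequality \eqref{PP1} is an $l^2$-decoupling statement at the critical exponent, and I would attack it by induction on $\delta^{-1}$. After localizing $S^{n-1}$ to caps of diameter $\delta^{1/2-\epsilon}$, on each of which $S^{n-1}$ becomes a small piece of $P^{n-1}$ under an affine change of variables, the problem reduces to the paraboloid, whose decisive feature is parabolic rescaling: a $\rho^{1/2}$-cap of $P^{n-1}$ is affinely equivalent to all of $P^{n-1}$, and this maps the part of $\Lambda$ inside the cap onto a $(\delta/\rho)^{1/2}$-separated subset of $P^{n-1}$. I would then use a Bourgain--Guth broad/narrow dichotomy: the broad part, where $|E_\Lambda a(\x)|$ is controlled by $n$ quantitatively transverse caps, is handled losslessly by the multilinear restriction (or multilinear Kakeya) inequality, while in the narrow regime one rescales and reapplies the inductive hypothesis at an intermediate scale $\rho$ and at scale $\delta/\rho$ inside the relevant cap. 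Bourgain's subcritical moment inequality is exactly the input that lets this iteration be started at an exponent below $p_c$, and a Wolff-type accounting of how the $\delta^{-\epsilon}$ factors accumulate over the $\sim\log(1/\delta)$ scales of the induction is what one must arrange so that the final constant remains $\delta^{-\epsilon}$.

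For the arithmetically structured sets $\Lambda$ that occur in torus Strichartz estimates --- $\Lambda$ a dilate of a sublattice of $\Z^{n-1}$ lifted to $S$ --- I would instead route through additive energies. By Cauchy--Schwarz, $\int_{B_R}|E_\Lambda a|^4\lesssim|B_R|\big(\sup_s r_\Lambda(s)\big)\|a_\xi\|_2^4$, where $r_\Lambda(s)=\#\{(\xi_1,\xi_2)\in\Lambda^2:\xi_1+\xi_2=s\}$; it is here that $R\gtrsim\delta^{-1}$ is used, so that the additive constraint is effectively exact. Fixing the value of $\xi_1+\xi_2$ forces $\xi_1$ onto a sphere, so $\sup_s r_\Lambda(s)$ is bounded by the largest number of points of $\Lambda$ on such a sphere; more robustly, the full energy $E_2(\Lambda)=\sum_s r_\Lambda(s)^2$, and likewise the higher energies $E_k(\Lambda)$, can be rewritten as incidence counts between the points of $\Lambda$ and suitable families of circles or curves, which the Szemer\'edi--Trotter theorem then bounds --- bypassing the number-theoretic input (estimates for representations by quadratic forms) used in the classical arguments. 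When $4\ge p_c$, that is $n\ge3$, the resulting $L^4$ bound sits at or above the critical exponent (for $n=3$ it already is \eqref{PP1}), so interpolating it against the subcritical estimates of the previous paragraph lands exactly at $p_c$, and then the reduction above yields the conjecture for these $\Lambda$.

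In both routes the real obstacle is the endpoint $p=p_c$. In the induction-on-scales argument the multilinear input is lossless only in the broad regime, so the narrow regime must be controlled uniformly over the $\sim\log(1/\delta)$ scales of the induction without conceding any power of $\delta$; this is what makes a direct proof of \eqref{PP1} hard. In the incidence-theoretic argument the limitation is that Szemer\'edi--Trotter is a two-dimensional statement: it controls $E_2$, hence the $L^4$ norm, but not the higher energies $E_k$ with $k\ge3$ that would be needed for $n=2$, nor the genuinely higher-dimensional incidence counts a frontal attack in large $n$ would demand. For this reason I expect the conjecture in full --- as opposed to its lattice specializations --- to be out of reach of these methods alone.
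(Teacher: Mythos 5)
The statement you are addressing is a \emph{conjecture}, and the paper does not prove it; you are candid about not having a proof either, so the right comparison is between your plan and the paper's partial results. Your opening reduction is correct and is exactly the paper's own remark after Conjecture \ref{c1}: the critical case $p=\frac{2(n+1)}{n-1}$ implies both \eqref{EE42} and \eqref{EE44} by interpolating against \eqref{EE43} and the trivial $L^\infty$ bound $\|a_\xi\|_1\le|\Lambda|^{1/2}\|a_\xi\|_2\lesssim\delta^{-\frac{n-1}{4}}\|a_\xi\|_2$, and your exponent bookkeeping checks out. Your second paragraph, however, is a research program rather than an argument: the broad/narrow dichotomy with lossless multilinear input at the critical exponent is precisely the step nobody knows how to close here, and the paper's actual mechanism (Sections \ref{s3}--\ref{s7}) is different in substance --- a weak-type reformulation $P(p,\alpha)$, Wolff-style wave packets and the two-point incidence bound of Proposition \ref{incwolffplandp}, with Bourgain's subcritical Theorem \ref{thm:Bor} used only to dispose of small $\lambda$ --- and it stalls at $p>\frac{2(n+2)}{n-1}$ (Theorems \ref{thmmm1} and \ref{thmmmm2}), strictly above the conjectured threshold.

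Two concrete defects in your third paragraph. First, the Szemer\'edi--Trotter theorem does \emph{not} bound point--circle incidences; the paper explicitly treats the circle-point incidence bound as an open conjecture, and its way around this for $n=3$ is the observation that an additive quadruple on $P^2$ projects to two diametrically opposite pairs on a common circle, so that each quadruple produces a right angle and Theorem \ref{P-Sthm} (Pach--Sharir) applies. Without that observation your energy bound $\E_2(\Lambda')\lesssim_\epsilon|\Lambda'|^{2+\epsilon}$ does not follow from what you cite. Second, your claim that $R\gtrsim\delta^{-1}$ makes the additive constraint ``effectively exact'' fails for general $\delta^{1/2}$-separated sets: distinct sums $\xi_1+\xi_2$ can be nonzero but arbitrarily small, which is why the paper's Theorem \ref{thmmmm4} only holds for $R\gtrsim|\Lambda|^2/\upsilon$ with $\upsilon$ the minimal nonzero value of $|\eta_1+\eta_2-\eta_3-\eta_4|$ --- i.e.\ it proves the limit Conjecture \ref{c3}, not Conjecture \ref{c1}. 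Your restriction to lattices saves this step in your intended application, but then the passage from the $n=3$ critical $L^4$ bound to general $n\ge4$ by ``interpolating against the subcritical estimates of the previous paragraph'' circularly invokes the unproven induction-on-scales step. In short: the first paragraph reproduces a known reduction, and the rest correctly identifies the obstacles but does not overcome them.
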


Throughout the whole paper, the implicit constants hidden inside the notation  $\lesssim$ will in general  depend on $p$ and $n$, but we will not record this dependence.

In addition to $p=\infty$,  the only trivial range in Conjecture \ref{c1} is $1\le p\le 2$, in light of \eqref{EE43} and H\"older.
Note that both the supercritical estimate \eqref{EE42} and the subcritical one \eqref{EE44} would follow  if the conjectured estimate held true for the critical index $p=\frac{2(n+1)}{n-1}$. One can see this via H\"older and the estimate for $p=\infty$.

It is reasonable to hope that in the subcritical regime one may be able to replace  $\delta^{-\epsilon}$  by a constant $C_{p,n}$ independent of $\delta$. This is indeed known (and fairly immediate, via a geometric argument) when $n=2$ and $p\le 4$, but seems to be in general an extremely difficult question. To the author's knowledge, no other examples of $2<p<\frac{2(n+1)}{n-1}$ are known for when this holds.
\bigskip

A very robust machinery for proving discrete restriction estimates is the one relying on  what we will refer to in this paper as {\em decouplings}. In short, the contribution from pieces of the Fourier transform localized on pairwise disjoint caps on $S$ will get decoupled via analytic, geometric and topological mechanisms.

Let $\A_\delta$ be the $\delta$ neighborhood of $P^{n-1}$ and let $\P_\delta$ be a finitely overlapping cover of  $\A_\delta$ with curved regions $\theta$ of the form
\begin{equation}
\label{EE14}
\theta=\{(\xi_1,\ldots,\xi_{n-1},\eta+\xi_1^2+\ldots+\xi_{n-1}^2):\;(\xi_1,\ldots,\xi_{n-1})\in C_\theta,\;|\eta|\le 2\delta\},
\end{equation}
where $C_\theta$ runs over all cubes   $c+[-\frac{\delta^{1/2}}{2},\frac{\delta^{1/2}}{2}]^{n-1}$ with $c\in \frac{\delta^{1/2}}{2}\Z^{n-1}\cap [-1/2,1/2]^{n-1}$. Note that each $\theta$ sits inside a $\approx\delta^{1/2}\times \ldots\delta^{1/2}\times \delta$ rectangular box. A similar decomposition exists for the sphere and we will use the same notation $\P_\delta$ for it. We will denote by $f_\theta$ the Fourier restriction of $f$ to $\theta$. It seems reasonable to conjecture the following.
\begin{conjecture}[$l^2$ weak decoupling conjecture]
\label{c2}
Let $S$ be either $P^{n-1}$ or $S^{n-1}$ and assume $\supp(\hat f)\subset \A_\delta$.
Then for  $p\ge \frac{2(n+1)}{n-1}$ and $\epsilon>0$ we have
$$
\|f\|_p\lesssim_\epsilon \delta^{-\frac{n-1}{4}+\frac{n+1}{2p}-\epsilon}(\sum_{\theta\in \P_\delta}\|f_\theta\|_p^2)^{1/2},
$$
while if $2\le p\le \frac{2(n+1)}{n-1}$ we have
$$
\|f\|_p\lesssim_\epsilon \delta^{-\epsilon}(\sum_{\theta\in \P_\delta}\|f_\theta\|_p^2)^{1/2}.
$$
\end{conjecture}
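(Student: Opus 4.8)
\medskip
\noindent\textbf{Proof strategy.} As this is stated as a conjecture, I describe the line of attack I would pursue rather than a finished proof. The first step is a sequence of standard reductions. It suffices to establish the inequality at the single critical exponent $p_c:=\frac{2(n+1)}{n-1}$; the remaining ranges follow by interpolating with the elementary identity $\|f\|_2\sim(\sum_\theta\|f_\theta\|_2^2)^{1/2}$ (Plancherel together with the bounded overlap of the regions $\theta$) and by a routine application of Bernstein's inequality on balls of radius $\delta^{-1}$. Next, by Minkowski's inequality and a partition of $\R^n$ into such balls $B$, it is enough to prove a local inequality $\|f\|_{L^{p_c}(B)}\lesssim_\epsilon\delta^{-\epsilon}(\sum_\theta\|f_\theta\|_{L^{p_c}(w_B)}^2)^{1/2}$ with $w_B$ a weight adapted to $B$; after parabolic rescaling this becomes an inequality at scale $\delta$ whose best constant $D(\delta)$ one seeks to control by induction on the scale. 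Finally the sphere case reduces to the paraboloid, since at unit scale $S^{n-1}$ is a union of boundedly many paraboloid-type caps.

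\medskip
The heart of the argument is a broad--narrow (Bourgain--Guth) decomposition. Fix a large constant $K$, tile $[-\tfrac12,\tfrac12]^{n-1}$ by cubes of side $K^{-1}$, and split $f=\sum_\tau f_\tau$ accordingly. At each $x\in B$ one of two things holds: either a single piece $f_\tau$ controls $|f(x)|$ up to a factor $K^{O(1)}$ (the \emph{narrow} alternative), or there are $n$ of the pieces $f_{\tau_1},\dots,f_{\tau_n}$ that are quantitatively transversal and each satisfy $|f_{\tau_j}(x)|\gtrsim K^{-O(1)}|f(x)|$ (the \emph{broad} alternative). On the narrow part one applies the desired decoupling inside the single cap $\tau$ at the finer relative scale, and a parabolic rescaling of $\tau$ to the unit paraboloid returns the same inequality at scale $\sim K^2\delta$, contributing a factor $(K^2\delta)^{-\epsilon}$ to $D(\delta)$. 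The essential refinement over the original treatment of the cone is to carry out this dichotomy so that all losses are powers of $K$ alone and never powers of $\delta$ --- only then can the recursion close.

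\medskip
The broad contribution is where geometry enters. One estimates the multilinear quantity $\big\|\prod_{j=1}^n|f_{\tau_j}|^{1/n}\big\|_{L^{p_c}(B)}$ using the multilinear restriction theorem of Bennett--Carbery--Tao --- itself proved by incidence-geometric means, via multilinear Kakeya --- together with an induction on scales (relating scale $\delta$ to scale $\delta^{1/2}$ by parabolic rescaling) interlocked both with the linear recursion above and with an induction on the dimension $n$, so that lower-dimensional decoupling for paraboloids over affine subspaces is available at each stage. The broad and narrow estimates then combine into a recursive inequality bounding $D(\delta)$ by $K^{O(1)}$ times a product of a power of $D(K^2\delta)$ and a power of the multilinear constant; iterating this recursion and optimizing in $K$ forces $D(\delta)\lesssim_\epsilon\delta^{-\epsilon}$ for every $\epsilon>0$. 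Any such bound immediately yields Conjecture \ref{c1}, since a $\delta^{1/2}$-separated set $\Lambda$ places $\lesssim 1$ point in each $\theta$, reducing each $\|f_\theta\|_p$ to a single-frequency estimate.

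\medskip
The main obstacle is making this recursion genuinely self-improving, and here three points are delicate. First, one must execute the broad/narrow split losing only powers of $K$; this is precisely what obstructed the naive Bourgain--Guth approach for the cone and needs a more careful pigeonholing. Second, one needs the multilinear decoupling at the \emph{sharp} exponent $p_c$, not merely at the larger exponent that multilinear restriction supplies for free, which forces one to feed the linear decoupling back into the multilinear induction. Third, one must control how the constants compound over the intermediate scales so that the total loss stays subpolynomial in $\delta$. The low-dimensional case, by contrast, is elementary: for $n=2$ and $p\le4$ one expands $\|f\|_4^4=\|(\sum_\theta f_\theta)^2\|_2^2$ and uses that on the parabola the sumsets $\theta_1+\theta_2$ overlap boundedly to get the estimate with an absolute constant, and the $\ell^p$ version of the critical inequality is reachable by older exponential-sum methods; it is the $\ell^2$ formulation for $n\ge3$ that demands the full apparatus sketched above.
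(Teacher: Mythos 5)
This statement is posed in the paper as a \emph{conjecture}; the paper contains no proof of it, so there is nothing to match your argument against line by line. What the paper actually establishes is strictly weaker: Theorem \ref{thmmm1} gives the supercritical decoupling only for $p>\frac{2(n+2)}{n-1}$ (not down to the critical $\frac{2(n+1)}{n-1}$), and it does so by a route quite different from yours --- a refinement of Wolff's induction-on-scales machinery (wave packet decompositions into balanced $N$-functions, a weak-type reformulation $P(p,\alpha)$, the two-points-determine-a-line incidence lemma of Proposition \ref{incwolffplandp} to localize to smaller cubes, and parabolic rescaling), with the transversality input entering only through the black-boxed subcritical estimate \eqref{Snew29} at $p=\frac{2n}{n-1}$. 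Your sketch instead follows the Bourgain--Guth broad/narrow dichotomy combined with multilinear restriction and a double induction on scale and dimension. That is a genuinely different program, and it is in fact the one that later resolved the full conjecture; your preliminary reductions (to the critical exponent, to a local estimate, from the sphere to the paraboloid, and the elementary $n=2$, $p\le 4$ case) are all correct and consistent with remarks made in the paper.

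That said, what you have written is a strategy, not a proof, and you should not represent it as settling the statement. The three difficulties you flag at the end are precisely the substance of the theorem: you never derive the recursive inequality for $D(\delta)$, never prove the multilinear decoupling at the sharp exponent $p_c$ (multilinear restriction alone gives it only for larger $p$, and upgrading it requires the ball-inflation/$L^2$-orthogonality bookkeeping across all intermediate scales), and never verify that the accumulated losses over the $\log\log(1/\delta)$ or $\log(1/\delta)$ many scales stay subpolynomial. Until those steps are executed the recursion does not close, and the conjecture remains open at the level of this write-up. If your goal is to prove something within reach of the present paper's toolkit, the correct target is the restricted range of Theorem \ref{thmmm1}, where the incidence lemma substitutes for transversality at the cost of the gap between $\frac{2(n+1)}{n-1}$ and $\frac{2(n+2)}{n-1}$.
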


We point out that Wolff \cite{TWol} has initiated the study of $l^p$ weak decouplings, $p>2$. His work provides a lot of the inspiration for our paper. We mention briefly that there is a stronger form of decoupling, sometimes referred to as {\em square function estimate}, which predicts that
\begin{equation}
\label{kjfehrfyryufjhfuygyrufdhcblkskdiopweru}
\|f\|_p\lesssim_\epsilon \delta^{-\epsilon}\|(\sum_{\theta\in \P_\delta}|f_\theta|^2)^{1/2}\|_p,
\end{equation}
for $2\le p\le \frac{2n}{n-1}$. Minkowski's inequality shows that \eqref{kjfehrfyryufjhfuygyrufdhcblkskdiopweru} is indeed stronger than Conjecture \ref{c2} in the range $2\le p\le \frac{2n}{n-1}$. This  is also confirmed by the lack of any results for \eqref{kjfehrfyryufjhfuygyrufdhcblkskdiopweru} when $n\ge 3$.

In analogy to Conjecture \ref{c1}, the result for the critical index $p=\frac{2(n+1)}{n-1}$ in Conjecture \ref{c2} would imply the result for all other values of $p$. Conjecture \ref{c2} is false for $p<2$. This can easily be seen by testing it with functions of the form $g_\theta(x)=f_\theta(x+c_\theta)$, where the numbers $c_\theta$ are very far apart from each other.

It has been observed in \cite{Bo2} that Conjecture \ref{c2} for a given $p$ implies Conjecture \ref{c1} for the same $p$. Here is a sketch of the argument. First, note that the statement
$$\|f\|_p\lesssim \delta^{c_p}(\sum_{\theta\in \P_\delta}\|f_\theta\|_p^2)^{1/2},\;\text{whenever }\supp(\hat f)\subset \A_\delta$$
easily implies that for each $f:S\to\C$ and $R\gtrsim \delta^{-1}$
\begin{equation}
\label{EE45}
(\int_{B_R}|\widehat{fd\sigma}|^p)^{1/p}\lesssim \delta^{c_p}(\sum_{\theta\in \P_\delta}\|\widehat{(f^\theta d\sigma})w_{B_{R}}\|_{L^p(\R^n)}^2)^{1/2},
\end{equation}
where  $f^\theta=f1_\theta$ is the restriction of $f$ to the $\delta^{1/2}$- cap $\theta$ on $S$ and $$w_{B_R}(x)=(1+\frac{|x-c(B_R)|}{R})^{-100}.$$
It now suffices to use $f=\sum_{\xi\in\Lambda}a_\xi \sigma(U(\xi,\tau))^{-1}1_{U(\xi,\tau)}$ in \eqref{EE45}, where $U(\xi,\tau)$ is a $\tau$- cap on $S$ centered at $\xi$, and to let $\tau\to 0$.
\bigskip

Conjectures \ref{c2} and \ref{c1} have been recently verified by Bourgain for $p=\frac{2n}{n-1}$, using induction on scales and multilinear theory.
\begin{theorem}[\cite{Bo2}]
\label{thm:Bor}
Assume $\supp(\hat f)\subset \A_\delta$.
For each $\epsilon>0$ and $p=\frac{2n}{n-1}$
\begin{equation}
\label{Snew29}
\|f\|_p\lesssim_\epsilon \delta^{-\epsilon}(\sum_{\theta\in \P_\delta}\|f_\theta\|_p^2)^{1/2}.
\end{equation}
\end{theorem}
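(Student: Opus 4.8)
\medskip
The plan is to prove \eqref{Snew29} by an induction on scales driven by the multilinear restriction theorem of Bennett--Carbery--Tao, in the spirit of the Bourgain--Guth method. Write $D(\delta)$ for the best constant in the $l^2$ decoupling inequality at scale $\delta$ for the critical exponent $p=\frac{2n}{n-1}$; after localizing with the weights $w_{B_R}$, $R=\delta^{-1}$, this is equivalent to a statement about the extension operator $Ef=\widehat{fd\sigma}$ on a ball of radius $\delta^{-1}$. Two standard reductions are used repeatedly: \emph{parabolic rescaling}, by which decoupling the functions Fourier--supported in a single cap $\tau$ of diameter $\sigma^{1/2}$ into $\delta^{1/2}$--subcaps costs only $D(\delta/\sigma)$; and the coincidence that $\frac{2n}{n-1}$ is exactly the Lebesgue exponent for which $\|\prod_{j=1}^n|E_jg_j|^{1/n}\|_{L^p(B_R)}\lesssim_\epsilon R^\epsilon\prod_j\|g_j\|_{L^2}^{1/n}$ holds for transverse caps $\tau_1,\dots,\tau_n$. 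The induction on dimension has base case $n=2$, $p=4$, where \eqref{Snew29} is the classical $L^4$ orthogonality estimate for the parabola (each sum $\theta+\theta'$ of two caps determines the unordered pair with bounded multiplicity).

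Fix a large parameter $K$, to be a \emph{constant}, and cover $S$ by caps $\tau$ of diameter $K^{-1}$. The Bourgain--Guth dichotomy bounds $|Ef(x)|$ pointwise by $K^{O(1)}$ times the larger of a \emph{narrow} term $\max_V\big|\sum_{\tau\subset V}Ef_\tau(x)\big|$, with $V$ ranging over $O(1)$--neighbourhoods of hyperplanes, and a \emph{broad} term $\max\prod_{j=1}^n|Ef_{\tau_j}(x)|^{1/n}$ over transverse $n$--tuples. The broad term is controlled by Bennett--Carbery--Tao combined with $L^2$ orthogonality inside each $\tau_j$ and then parabolic rescaling to pass from $K^{-1}$--caps down to $\delta^{1/2}$--caps. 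The narrow term is handled by the induction on dimension: the frequencies inside a slab $V$ lie, after an affine change of variables, on a copy of $P^{n-2}$, and since $p=\frac{2n}{n-1}$ lies in the range already established for $P^{n-2}$ (indeed $\frac{2n}{n-1}\le\frac{2(n-1)}{n-2}$), the $(n-1)$--dimensional case decouples the caps $\tau\subset V$ with acceptable loss, after which parabolic rescaling again restores the $\delta^{1/2}$ scale.

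Combining the two contributions yields a self--improving inequality bounding $D(\delta)$ in terms of $D(\delta^{1/2})$ (or $D$ at some intermediate scale) times the losses, which are: a power of the fixed constant $K$; the $\delta^{-\epsilon}$ factor from Bennett--Carbery--Tao; and the lower--dimensional decoupling constant. Iterating this step down the scales $\delta,\delta^{1/2},\delta^{1/4},\dots$ until one reaches scale $\asymp1$ takes $\asymp\log\log\frac1\delta$ iterations; since $K$ is a constant, the accumulated $K$--losses amount to $\big(\log\frac1\delta\big)^{O(1)}\lesssim_\epsilon\delta^{-\epsilon}$, while the $\delta^{-\epsilon}$--type losses, applied at the geometrically shrinking scales, sum to $\delta^{-O(\epsilon)}$. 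As $D$ is $O(1)$ at scale $1$, this gives $D(\delta)\lesssim_\epsilon\delta^{-\epsilon}$. The sphere is treated identically: locally $S^{n-1}$ is a graph over $P^{n-1}$ with comparable curvature, and every estimate used is affine--invariant or robust under this perturbation.

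The main obstacle is to arrange the induction so that \emph{no} step leaks a power of $\delta^{-1}$. The crucial delicate point is that multilinear restriction, applied once, controls the broad term only through $L^2$ norms $\|f_{\tau_j}\|_2$ of the cap pieces, which on a ball of radius $\delta^{-1}$ exceed the $L^p$ norms $\|f_{\tau_j}\|_p$ by a genuine power $\delta^{-c(n)}$; the whole point of the scale--induction is to discharge this debt in $\asymp\log\log\frac1\delta$ small installments, each only a power of $K$ and of $\log\frac1\delta$, and verifying that this is so is the technical heart of the argument. A secondary difficulty is the combinatorial geometry of the broad/narrow split --- bounding the number of slabs $V$, the multiplicity with which a cap occurs in transverse tuples, and confirming that $S\cap V$ really does reduce to an $(n-1)$--dimensional decoupling --- while the remaining ingredients (parabolic rescaling, Bernstein/H\"older comparisons, and the transition between the $\supp(\hat f)\subset\A_\delta$ and extension--operator formulations) are routine.
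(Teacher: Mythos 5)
You should first note that the paper does not prove Theorem \ref{thm:Bor}: it is quoted from Bourgain \cite{Bo2}, with only the remark that the proof uses induction on scales and multilinear theory. Your architecture --- a broad/narrow decomposition with a constant parameter $K$, Bennett--Carbery--Tao for the broad part, lower-dimensional decoupling for the narrow part (the exponent check $\frac{2n}{n-1}<\frac{2(n-1)}{n-2}$ is correct, as is the $L^4$ base case for the parabola), and parabolic rescaling to restore the scale $\delta^{1/2}$ --- is indeed the architecture of the cited proof, so the strategy is sound.

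Two points in your description of how the argument closes are wrong, however. First, you locate the ``technical heart'' in discharging, over many scales, a loss $\delta^{-c(n)}$ incurred because multilinear restriction returns $L^2$ rather than $L^p$ norms of the cap pieces. At $p=\frac{2n}{n-1}$ there is no such loss: with $R=\delta^{-1}$ one has $\|f_\theta\|_{L^2(w_{B_R})}\le |B_R|^{\frac12-\frac1p}\|f_\theta\|_{L^p(w_{B_R})}$ and $|B_R|^{\frac12-\frac1p}=R^{n(\frac12-\frac1p)}=R^{1/2}$, which exactly cancels the factor $R^{-1/2}$ arising in the passage between $\|g\|_{L^2(d\sigma)}$ and $\|Eg\|_{L^2(w_{B_R})}$. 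Hence a single application of Bennett--Carbery--Tao, followed by $L^2$ orthogonality inside each $\tau_j$ and H\"older, decouples the broad part all the way down to $\delta^{1/2}$-caps with only the $R^\epsilon$ loss; this is precisely why $p=\frac{2n}{n-1}$ is accessible, and it is in direct tension with your own (correct) observation that $\frac{2n}{n-1}$ is the BCT exponent. The multiscale ``debt-discharging'' you describe is the heart of the much harder proof of the full decoupling conjecture at $p=\frac{2(n+1)}{n-1}$, not of this theorem; here the induction on scales is needed only to handle the narrow part. Second, your iteration bookkeeping is internally inconsistent: with $K$ a fixed constant, parabolic rescaling of a $K^{-1}$-cap relates $D(\delta)$ to $D(K^2\delta)$, so the recursion runs for $\sim\log(1/\delta)/\log K$ steps, not $\log\log(1/\delta)$; reaching $\delta^{1/2}$ in one step would force $K\sim\delta^{-1/4}$, which is not a constant and makes the $K^{O(1)}$ losses fatal. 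The correct closing argument is the standard Bourgain--Guth bookkeeping for a recursion of the form $D(\delta)\le C_\epsilon(K)\delta^{-\epsilon}+C(K)\,D(K^2\delta)$, where one must check that the per-step factor multiplying $D(K^2\delta)$ is small enough (relative to $\log K$) that its $\log(1/\delta)/\log K$-fold product is $\lesssim\delta^{-\epsilon}$ for $K$ large.
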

\bigskip

We close this section by mentioning that the discrete restriction phenomenon has also been investigated in the special case when the frequency points  $\Lambda$ belong to a (re-scaled lattice). There is extra motivation in considering these problems coming from PDEs, see for example Section \ref{s2} below. The best known result for the paraboloid $$P^{n-1}(N):=\{\xi:=(\xi_1,\ldots,\xi_n)\in\Z^n:\;\xi_n=\xi_1^2+\ldots+\xi_{n-1}^2,\;|\xi_1|,\dots,|\xi_{n-1}|\le N\}$$ is due to Bourgain \cite{Bo3}, \cite{Bo2}.
\begin{theorem}[Discrete restriction: the lattice case (paraboloid)]

Let $n\ge 4$. For each $a_\xi\in\C$ and each $\epsilon>0$ we have
\begin{equation}
\label{EE47}
\|\sum_{\xi\in P^{n-1}(N)}a_\xi e(\xi\cdot \x)\|_{L^p(\T^n)}\lesssim_\epsilon N^{\frac{n-1}{2}-\frac{n+1}{p}+\epsilon}\|a_\xi\|_{l^2},
\end{equation}
for  $p\ge \frac{2(n+2)}{n-1}$ and
\begin{equation}
\label{EE46}
\|\sum_{\xi\in P^{n-1}(N)}a_\xi e(\xi\cdot \x)\|_{L^p(\T^n)}\lesssim_\epsilon N^{\epsilon}\|a_\xi\|_{l^2},
\end{equation}
for $1\le p\le \frac{2n}{n-1}$.
\end{theorem}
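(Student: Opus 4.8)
Write $F=\sum_{\xi\in P^{n-1}(N)}a_\xi e(\xi\cdot\x)$. The estimate splits into two essentially independent parts, which call for very different inputs. \emph{The subcritical range $1\le p\le\frac{2n}{n-1}$}: the plan is to transfer Theorem \ref{thm:Bor} to the lattice by parabolic rescaling, in the spirit of the passage from $l^2$ decoupling to \eqref{EE45} recalled above. Put $\delta=N^{-2}$ and let $g$ be the exponential sum obtained from $F$ by the substitution $x_j\mapsto x_j/N$ for $1\le j\le n-1$ and $x_n\mapsto x_n/N^2$; its frequencies $(\xi_1/N,\dots,\xi_{n-1}/N,\xi_n/N^2)$ lie on $P^{n-1}$, after a harmless rescaling by a constant, and form a $\delta^{1/2}$-separated set $\Lambda$. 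Since the cubes $C_\theta$ in the definition of $\P_\delta$ have side $\delta^{1/2}=N^{-1}$ while the horizontal coordinates of $\Lambda$ lie in $\frac1N\Z^{n-1}$, each $\theta\in\P_\delta$ meets $\Lambda$ in $O(1)$ points, so every $g_\theta$ is a single exponential up to a bounded factor. Applying \eqref{EE45} at $p=\frac{2n}{n-1}$ with the decoupling constant $\delta^{-\epsilon}$ supplied by Theorem \ref{thm:Bor}, letting $\tau\to0$ as in the discussion after \eqref{EE45}, and using $\|w_{B_R}\|_{L^p(\R^n)}\sim|B_R|^{1/p}$, I would obtain, on a ball $B_R$ with $R$ a large multiple of $\delta^{-1}=N^2$,
\[
\Big(\frac{1}{|B_R|}\int_{B_R}|g|^p\Big)^{1/p}\lesssim_\epsilon\delta^{-\epsilon}\|a_\xi\|_2.
\]
As $g$ is periodic with a fundamental cell of side $N$ in $n-1$ of the coordinates and $N^2$ in the last, and $B_R$ is tiled by copies of that cell up to a negligible boundary contribution, the left-hand side is comparable to $\|F\|_{L^p(\T^n)}$; recalling $\delta^{-\epsilon}=N^{2\epsilon}$ and relabelling $\epsilon$ gives \eqref{EE46} at $p=\frac{2n}{n-1}$. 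The remaining range $1\le p<\frac{2n}{n-1}$ follows by H\"older from this and the Parseval identity $\|F\|_{L^2(\T^n)}=\|a_\xi\|_2$.

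\emph{The supercritical range $p\ge\frac{2(n+2)}{n-1}$}: here Theorem \ref{thm:Bor}, being confined to exponents below $\frac{2(n+1)}{n-1}$, gives nothing, and genuine number theory must be invoked. The plan is to establish \eqref{EE47} at the endpoint $p_0:=\frac{2(n+2)}{n-1}$ and then interpolate, by H\"older, with the trivial bound $\|F\|_{L^\infty(\T^n)}\le(\#P^{n-1}(N))^{1/2}\|a_\xi\|_2\lesssim N^{(n-1)/2}\|a_\xi\|_2$: since the exponent $\frac{n-1}{2}-\frac{n+1}{p}$ in \eqref{EE47} is an affine function of $1/p$, this reproduces \eqref{EE47} for every $p\in[p_0,\infty]$. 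The endpoint itself is Bourgain's moment inequality \cite{Bo3}, proved by the Hardy-Littlewood circle method: one decomposes $\T^n$ into major arcs centred at rationals of small denominator and a minor-arc remainder. On the minor arcs one uses Weyl-type estimates for $\sum_{|\xi_j|\le N}e(\alpha\cdot\xi+\beta(\xi_1^2+\cdots+\xi_{n-1}^2))$, which enjoys extra cancellation there; combined with the Parseval bound $\|F\|_{L^2(\T^n)}=\|a_\xi\|_2$, this controls the minor-arc part of $\int_{\T^n}|F|^p$ once $p$ is sufficiently large. On the major arcs $F$ is well approximated by Gauss sums times smooth profiles, and its contribution is governed by a singular series whose convergence requires exactly $p>\frac{2(n+2)}{n-1}$. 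This loss of one full unit of $p$ over the conjectured critical exponent $\frac{2(n+1)}{n-1}$ is precisely the price of the minor-arc analysis, and it is this delicate balance that forces $n\ge4$; the cases $n=2,3$ are treated separately and in fact admit stronger estimates.

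Of these ingredients, the parabolic rescaling in the subcritical range and the $L^\infty$-interpolation in the supercritical range are entirely routine once Theorem \ref{thm:Bor} is granted. The main obstacle is the circle-method estimate at $p_0=\frac{2(n+2)}{n-1}$, and within it the minor-arc Weyl-sum analysis, for which no soft substitute is known; it is exactly this arithmetic input that the remainder of the paper is organised to avoid---for the classical torus in dimensions $n\ge3$---by routing the argument through Szemer\'edi-Trotter incidence bounds instead.
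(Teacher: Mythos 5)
Your subcritical half is sound and is essentially the intended route: Theorem \ref{thm:Bor} at $p=\frac{2n}{n-1}$, the decoupling-to-discrete-restriction implication \eqref{EE45} with $\delta=N^{-2}$, and periodicity of the rescaled exponential sum over a fundamental cell of size $N\times\cdots\times N\times N^2$ inside $B_R$, $R\gg N^2$; the observation that each $\theta\in\P_\delta$ captures $O(1)$ rescaled lattice points is the right one. The interpolation of \eqref{EE47} between the endpoint $p_0=\frac{2(n+2)}{n-1}$ and the trivial $L^\infty$ bound is also correct.

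The genuine gap is in your account of the endpoint itself. You assert that in the supercritical range ``Theorem \ref{thm:Bor}\dots gives nothing'' and that the estimate at $p_0=\frac{2(n+2)}{n-1}$ is already in \cite{Bo3} as a pure circle-method fact. That is not so: the Stein--Tomas argument implemented via the circle method (major-arc Gauss-sum asymptotics for the kernel $\sum_{|\xi_j|\le N}e(\xi\cdot x'+x_n|\xi|^2)$ fed into a $TT^*$/level-set analysis) only reaches the strictly weaker range $p\ge\frac{2(n+3)}{n-1}$, which is what \cite{Bo3} proves. The passage to $\frac{2(n+2)}{n-1}$ is precisely the content of \cite{Bo2}, and it is obtained by inserting the subcritical decoupling estimate of Theorem \ref{thm:Bor} into the circle-method level-set analysis --- the two ingredients must be run together, not assigned to disjoint ranges of $p$. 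This is what the paper means by ``the proof combines the implementation of the Stein--Tomas argument via the circle method with the weak decoupling.'' So the ``main obstacle'' you isolate (a minor-arc Weyl-sum analysis reaching $p_0$ on its own, with a singular series converging exactly for $p>p_0$) does not correspond to any existing argument, and your proof of \eqref{EE47} as written does not close. (Separately, the paper's Section \ref{s2} gives a different, number-theory-free proof of \eqref{EE47} as a corollary of Theorem \ref{thmmmm2}, but that rests on the new Theorem \ref{thmmm1} rather than on the circle method.)
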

The proof combines the implementation of the Stein-Tomas argument via the circle method with the weak decoupling \ref{thm:Bor}.
It is conjectured that \eqref{EE46} will hold for $p\le \frac{2(n+1)}{n-1}$. This was indeed proved to be true in \cite{Bo3} in the case $n=2$ and $n=3$  by a simple argument which uses the fact that circles in the plane contain "few" lattice points. We present an alternative argument in Section \ref{s8}, that does not rely on number theory.

There is an analogue problem for the sphere, but we will not be able to say anything new about it in this paper. The reader is referred to \cite{BD1} and \cite{BD2} for the best results in this case.
\begin{ack*}
The author has benefited from helpful conversations with  Jean Bourgain and  Nets Katz.
\end{ack*}
\bigskip

\section{New results}
\label{s2}
\bigskip

In the first part of our  paper we refine  Wolff's technology from \cite{TWol} and combine this  with Theorem \ref{thm:Bor} to answer Conjecture \ref{c2} for $P^{n-1}$ in  a large part of the supercritical regime.
\begin{theorem}
\label{thmmm1}
Let $S=P^{n-1}$ and assume $\supp(\hat{f})\subset \A_\delta$.
Then for $p> \frac{2(n+2)}{n-1}$ and $\epsilon>0$
\begin{equation}
\label{Snew26}
\|f\|_p\lesssim_\epsilon \delta^{-\frac{n-1}{4}+\frac{n+1}{2p}-\epsilon}(\sum_{\theta\in \P_\delta}\|f_\theta\|_p^2)^{1/2}.
\end{equation}
\end{theorem}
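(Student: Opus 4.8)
The plan is to interpolate between the critical-index decoupling of Bourgain (Theorem \ref{thm:Bor}) at the exponent $p_0 = \frac{2n}{n-1}$ and a trivial $L^\infty$-type bound, but this alone only gives \eqref{Snew26} for $p$ above some threshold that is typically worse than $\frac{2(n+2)}{n-1}$; the real work is to feed Bourgain's estimate into Wolff's induction-on-scales machinery to push the exponent down. So the first step is to set up the bilinear (or multilinear) reduction in the style of Wolff \cite{TWol}: one replaces the full decoupling constant $D_p(\delta)$ — the best constant in $\|f\|_p \lesssim D_p(\delta)(\sum_\theta \|f_\theta\|_p^2)^{1/2}$ for $\supp\hat f\subset\A_\delta$ — by a bilinear version $D_p^{bil}(\delta)$ controlling $\|(f\,g)^{1/2}\|_p$ for functions $f,g$ supported on \emph{transverse} (separated) subsets of the paraboloid. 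The standard Tao–Vargas–Vega / Bourgain-type argument (a Whitney decomposition of $\P_\delta\times\P_\delta$ away from the diagonal, plus a pigeonholing over the diagonal scales) should give $D_p(\delta) \lesssim_\epsilon \delta^{-\epsilon}\sup_{\delta\le\sigma\le 1} \sigma^{(\text{gain})} D_p^{bil}(\sigma)$, reducing everything to the bilinear constant.

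Next I would prove a recursive inequality for the bilinear constant that engages the parabolic scaling structure. The key geometric input is that on a cap of size $\sigma^{1/2}$ the paraboloid, after parabolic rescaling, looks like the full paraboloid again at scale $\delta/\sigma$; iterating this through $\sim\log(1/\delta)$ stages is Wolff's scheme. At each stage one needs an honest gain, and this is where Bourgain's Theorem \ref{thm:Bor} enters: it supplies a clean $\ell^2$ decoupling with \emph{no loss} (only $\delta^{-\epsilon}$) at $p_0=\frac{2n}{n-1}$, which one uses at the finest scale of the induction in place of the weaker estimates Wolff had available. Combining the no-loss base case at $p_0$ with the bilinear transversality gain — which for the paraboloid in $\R^n$ is the familiar $\sigma$-power coming from the size of the transverse overlap, and is exactly strong enough once $p>\frac{2(n+2)}{n-1}$ — should close the induction and yield $D_p^{bil}(\delta)\lesssim_\epsilon \delta^{-\frac{n-1}{4}+\frac{n+1}{2p}-\epsilon}$ in that range. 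Unwinding the bilinear-to-linear reduction from the first step then gives \eqref{Snew26}.

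I expect the main obstacle to be the bookkeeping in the bilinear-to-linear passage combined with the iteration: one must track how the transversality parameter, the current scale, and the exponent $p$ interact across the $\log(1/\delta)$ stages, and verify that the accumulated constants remain $\delta^{-\epsilon}$ rather than degrading to a genuine power. In particular the range restriction $p>\frac{2(n+2)}{n-1}$ (strict, and with the $+2$ rather than the conjectured $+1$) is not an artifact of laziness — it is the precise threshold at which the transversality gain in the Whitney decomposition beats the loss incurred by summing the dyadic scales, so one has to be careful that the argument genuinely breaks at $p=\frac{2(n+2)}{n-1}$ and that this matches the exponent appearing in \eqref{EE47}. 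A secondary technical point is handling the weights $w_{B_R}$ and the passage between the neighborhood formulation ($\supp\hat f\subset\A_\delta$) and the extension-operator formulation, but this is routine and follows the template already sketched around \eqref{EE45}.
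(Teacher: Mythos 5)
You have correctly identified the two inputs (Wolff's induction-on-scales technology and Bourgain's subcritical Theorem \ref{thm:Bor}), but the concrete mechanism you propose for combining them is not the one that works here, and the steps where the theorem would actually be proved are asserted rather than argued. There is no bilinear reduction via a Whitney decomposition of $\P_\delta\times\P_\delta$ in this argument, and no recursion for a bilinear decoupling constant. The engine of the proof is instead: (i) a weak-type reformulation $P(p,\alpha)$ of the decoupling inequality as a level-set estimate $|\{|f|>\lambda\}|\lesssim(\delta^{-\frac{n-1}{4}+\frac{n+1}{2p}-\frac{\alpha}{p}}/\lambda)^p\|f\|_2^2$, shown in Proposition \ref{equivproppalpha} to self-upgrade to the $\|f\|_{p,\delta}$ form via the wave packet decomposition into balanced $N$-functions; (ii) the self-improvement $P(p,\alpha)\Rightarrow P(p,\gamma\alpha)$ of Proposition \ref{mainmech}, iterated in $\alpha$ (not in the spatial scale); and (iii) inside each self-improvement step, the \L aba--Wolff incidence bound of Proposition \ref{incwolffplandp} --- a Cauchy--Schwarz count of tube-cube incidences resting on the axiom that two points determine a line, producing the term $|\H|^{1/2}|W|$ --- which localizes a wave packet sum to $N^{1-\epsilon_p}$-cubes provided the number of tubes satisfies $|\H(f)|\lesssim\lambda^2$, after which parabolic rescaling and the induction hypothesis at scale $\delta^{1-\epsilon_p}$ apply. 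None of this appears in your outline, and without the localization lemma there is no gain to iterate.

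More importantly, your explanation of where the threshold $p>\frac{2(n+2)}{n-1}$ comes from is not correct, and this is the one place where a proof sketch must be quantitatively honest. The threshold is not "the point at which the transversality gain in the Whitney decomposition beats the dyadic summation loss"; it arises from matching two regimes of the level $\lambda$. Bourgain's estimate at $p_0=\frac{2n}{n-1}$ plus Chebyshev handles $\lambda\lesssim N^{\frac{n-1}{4}-\frac{n-1}{2(pn-p-2n)}}$, while the incidence/localization step (Lemma \ref{hfuiy43578tyufrugf}) requires, after the wave packet decomposition of $T_\Delta f$ on $N^{1/2}$-cubes, that the tube count $|\H(f_{\Delta,h})|\lesssim N^{\frac{n-1}{4}}h^{-2}$ be dominated by $(\lambda/h)^2$, i.e.\ $\lambda\gtrsim N^{\frac{n-1}{8}}$ up to $\epsilon$-losses. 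These two ranges of $\lambda$ cover everything precisely when $\frac{n-1}{4}-\frac{n-1}{2(pn-p-2n)}\ge\frac{n-1}{8}$, which is $p\ge\frac{2(n+2)}{n-1}$. If a genuinely bilinear/transverse argument of the kind you describe could be run, one would expect to beat this exponent (the multilinear input is already fully absorbed into Theorem \ref{thm:Bor}); the $+2$ is the signature of the purely linear, Cauchy--Schwarz-based incidence count. As written, your proposal would need the entire Sections \ref{s3}--\ref{s7} apparatus supplied before it becomes a proof.
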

As explained before, this implies
\begin{theorem}
\label{thmmmm2}
Let $\Lambda\subset P^{n-1}$ be a  $\delta^{1/2}$- separated set and let $R\gtrsim \delta^{-1}$. Then for each $\epsilon>0$
\begin{equation}
\label{EE49}
(\frac{1}{|B_R|}\int_{B_R}|\sum_{\xi\in \Lambda}a_\xi e(\xi \cdot\x)|^{p})^{1/p}\lesssim_{\epsilon} \delta^{\frac{n+1}{2p}-\frac{n-1}{4}-\epsilon}\|a_\xi\|_2
\end{equation}
if $p\ge \frac{2(n+2)}{n-1}$.
\end{theorem}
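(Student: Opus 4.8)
We deduce \eqref{EE49} from the decoupling inequality \eqref{Snew26} of Theorem \ref{thmmm1} via the transference scheme outlined in Section 1; the plan is to carry out that scheme for $p>\frac{2(n+2)}{n-1}$ and then recover the endpoint $p=\frac{2(n+2)}{n-1}$ by interpolation. Put $c_p:=\frac{n+1}{2p}-\frac{n-1}{4}-\epsilon$. First I would turn \eqref{Snew26} into its extension form \eqref{EE45}. Given $f:P^{n-1}\to\C$ and $R\gtrsim\delta^{-1}$, let $\psi_R$ be a Schwartz bump with $\psi_R\ge1$ on $B_R$ and $\widehat{\psi_R}$ supported in the ball of radius $\sim R^{-1}\lesssim\delta$, and set $F:=(\widehat{fd\sigma})\psi_R$. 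Then $\supp(\hat F)\subset\A_\delta$, $|F|\ge|\widehat{fd\sigma}|$ on $B_R$, and $\|F_\theta\|_p\lesssim\|\widehat{(f^\theta d\sigma)}\,w_{B_R}\|_{L^p(\R^n)}$ for each $\theta\in\P_\delta$, so applying \eqref{Snew26} to $F$ produces \eqref{EE45} with this $c_p$ and no further loss in $\delta$. This is the one step that is not purely formal, but it is by now routine (cf.\ the remarks after \eqref{EE45}).

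Next I would specialize \eqref{EE45} to $\Lambda$, exactly as sketched in Section 1: insert $f=\sum_{\xi\in\Lambda}a_\xi\,\sigma(U(\xi,\tau))^{-1}1_{U(\xi,\tau)}$, with $U(\xi,\tau)$ the $\tau$-cap on $P^{n-1}$ centered at $\xi$, and let $\tau\to0$. Since $|\sigma(U(\xi,\tau))^{-1}\widehat{1_{U(\xi,\tau)}d\sigma}|\le1$ and $w_{B_R}\in L^p(\R^n)$, dominated convergence sends the left side of \eqref{EE45} to $\|\sum_{\xi\in\Lambda}a_\xi e(\xi\cdot\x)\|_{L^p(B_R)}$ and each summand on the right to $\|(\sum_{\xi\in\Lambda\cap\theta}a_\xi e(\xi\cdot\x))\,w_{B_R}\|_{L^p(\R^n)}$. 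Because $\Lambda$ is $\delta^{1/2}$-separated while $\P_\delta$ is finitely overlapping with each $\theta$ of diameter $\sim\delta^{1/2}$, every $\theta$ carries $O(1)$ points of $\Lambda$ and every $\xi$ lies in $O(1)$ caps; hence, using $\|w_{B_R}\|_{L^p(\R^n)}\sim|B_R|^{1/p}$ and Cauchy--Schwarz inside each cap, the limiting right side of \eqref{EE45} is $\lesssim_\epsilon\delta^{c_p}|B_R|^{1/p}\|a_\xi\|_2$. Dividing by $|B_R|^{1/p}$ yields \eqref{EE49} for every $p>\frac{2(n+2)}{n-1}$.

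Finally, for the endpoint $p_0:=\frac{2(n+2)}{n-1}$, where the target exponent $\frac{n+1}{2p_0}-\frac{n-1}{4}=-\frac{n-1}{4(n+2)}$ is negative, I would interpolate. Given $\epsilon>0$, choose $p_1>p_0$ close to $p_0$ and apply H\"older on the probability space $(B_R,|B_R|^{-1}dx)$ between \eqref{EE49} at $p_1$ (exponent $\frac{n+1}{2p_1}-\frac{n-1}{4}-\frac\epsilon2$) and the identity \eqref{EE43} at exponent $2$ (exponent $0$): with $s\in(0,1)$ defined by $\frac1{p_0}=\frac{1-s}2+\frac s{p_1}$ one obtains \eqref{EE49} at $p_0$ with exponent $s\bigl(\frac{n+1}{2p_1}-\frac{n-1}{4}-\frac\epsilon2\bigr)$, and since $s\to1$ as $p_1\downarrow p_0$ while the bracket tends to $-\frac{n-1}{4(n+2)}-\frac\epsilon2>-\frac{n-1}{4(n+2)}-\epsilon$, choosing $p_1$ close enough to $p_0$ makes this exponent at least $\frac{n+1}{2p_0}-\frac{n-1}{4}-\epsilon$. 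The only genuinely non-formal input in all of this is Theorem \ref{thmmm1}: the thickening of $fd\sigma$, the $\tau\to0$ limit, the $O(1)$ overlap counting and the $\epsilon$-arithmetic at the endpoint are routine bookkeeping, so I do not anticipate a serious obstacle there.
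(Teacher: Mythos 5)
Your proof is correct and follows the same route as the paper, which obtains Theorem \ref{thmmmm2} from Theorem \ref{thmmm1} ("as explained before") by exactly the transference argument sketched around \eqref{EE45}: thicken $fd\sigma$ to a function with Fourier support in $\A_\delta$, apply the decoupling, insert $f=\sum_{\xi\in\Lambda}a_\xi\sigma(U(\xi,\tau))^{-1}1_{U(\xi,\tau)}$ and let $\tau\to0$, using the $\delta^{1/2}$-separation to see that each cap carries $O(1)$ frequencies. Your final interpolation with \eqref{EE43} to capture the endpoint $p=\frac{2(n+2)}{n-1}$, which Theorem \ref{thmmm1} does not cover directly, correctly fills in a detail the paper leaves implicit.
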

These theorems can also be shown to hold for $S^{n-1}$, the only difference is in the details on how the re-scaling is done. The details are left to the reader.
\bigskip

There are two interesting consequences of Theorem \ref{thmmmm2}. First, note that we recover \eqref{EE47} without any use of number theory.

Second, we can generalize this and obtain new Strichartz estimates for the irrational tori. More precisely, fix $\frac1{2}<\theta_1,\ldots,\theta_{n-1}<2$. For $\phi\in L^2(\T^{n-1})$  consider its Laplacian
$$\Delta \phi(x_1,\ldots,x_{n-1})=$$
$$\sum_{(\xi_1,\ldots,\xi_{n-1})\in\Z^{n-1}}(\xi_1^2\theta_1+\ldots+\xi_{n-1}^2\theta_{n-1})\hat{\phi}(\xi_1,\ldots,\xi_{n-1})e(\xi_1x_1+\ldots+\xi_{n-1}x_{n-1})$$
on the irrational torus $\prod_{i=1}^{n-1}\R/(\theta_i\T)$. Let also
$$e^{it\Delta}\phi(x_1,\ldots,x_{n-1},t)=$$$$\sum_{(\xi_1,\ldots,\xi_{n-1})\in\Z^{n-1}}\hat{\phi}(\xi_1,\ldots,\xi_{n-1})e(x_1\xi_1+\ldots+x_{n-1}\xi_{n-1}+t(\xi_1^2\theta_1+\ldots+\xi_{n-1}^2\theta_{n-1})).$$
We prove
\begin{theorem}
\label{thmmmm3}Let $\phi\in L^2(\T^{n-1})$ with $\hat{\phi}\subset [-N,N]^{n-1}$.
Then for each $\epsilon>0$, $p\ge \frac{2(n+2)}{n-1}$ and each interval $I\subset\R$ with $|I|\gtrsim 1$ we have
\begin{equation}
\label{EE52}
\|e^{it\Delta}\phi\|_{L^{p}(\T^{n-1}\times I)}\lesssim_{\epsilon} N^{\frac{n-1}{2}-\frac{n+1}{p}+\epsilon}|I|^{1/p}\|\phi\|_2,
\end{equation}
and the implicit constant does not depend on $I$, $N$ and $\theta_i$.
\end{theorem}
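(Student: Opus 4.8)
The plan is to deduce \eqref{EE52} from the discrete restriction estimate of Theorem \ref{thmmmm2} by an affine reduction. The frequencies appearing in $e^{it\Delta}\phi$, namely $(\xi_1,\dots,\xi_{n-1},\theta_1\xi_1^2+\dots+\theta_{n-1}\xi_{n-1}^2)$ with $\xi\in\Z^{n-1}\cap[-N,N]^{n-1}$, lie on the graph of the elliptic form $\sum_i\theta_i\eta_i^2$, which is carried onto the standard paraboloid $P^{n-1}$ by the diagonal map $(\eta_1,\dots,\eta_{n-1})\mapsto(\sqrt{\theta_1}\eta_1,\dots,\sqrt{\theta_{n-1}}\eta_{n-1})$ in the base variables. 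Fixing a large absolute constant $c_0$ and putting
$$
\zeta(\xi):=\frac1{c_0N}\Bigl(\sqrt{\theta_1}\xi_1,\dots,\sqrt{\theta_{n-1}}\xi_{n-1},\ \theta_1\xi_1^2+\dots+\theta_{n-1}\xi_{n-1}^2\Bigr),
$$
the set $\Lambda:=\{\zeta(\xi):\ \hat\phi(\xi)\ne0\}$ is a $\delta^{1/2}$-separated subset of $P^{n-1}$ with $\delta^{1/2}\asymp N^{-1}$; since $1/2<\theta_i<2$ the implicit constants here, and everywhere below, are independent of the $\theta_i$. With the dual change of variables $X=(\tfrac{c_0N}{\sqrt{\theta_1}}x_1,\dots,\tfrac{c_0N}{\sqrt{\theta_{n-1}}}x_{n-1},c_0^2N^2t)$ one checks $e^{it\Delta}\phi(x,t)=F(X)$, where $F(X)=\sum_{\zeta\in\Lambda}a_\zeta e(\zeta\cdot X)$ and $a_{\zeta(\xi)}=\hat\phi(\xi)$, so $\|a_\zeta\|_{\ell^2(\Lambda)}=\|\phi\|_2$ by Parseval. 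Changing variables gives
$$
\|e^{it\Delta}\phi\|_{L^p(\T^{n-1}\times I)}^p\ \asymp\ N^{-(n+1)}\int_{Q_0\times \widetilde I}|F(X)|^p\,dX,
$$
where $Q_0=\prod_{i=1}^{n-1}[0,\tfrac{c_0N}{\sqrt{\theta_i}}]$ is the spatial period box of $F$ and $\widetilde I$ is the dilate of $I$ by $c_0^2N^2$, of length $\asymp N^2|I|$.

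The function $F$ is periodic with period box $Q_0$ in the first $n-1$ variables, but — and this is the only point at which the irrational torus differs from the rational one — it is \emph{not} periodic in $X_n$, because $\theta_i\xi_i^2\notin\Z$ in general. I would compensate by slicing the time variable. Partition $\widetilde I$ into intervals $J$ of length $\asymp N^2$; there are $\lesssim|I|$ of them, using $|I|\gtrsim1$. It then suffices to show
$$
\int_{Q_0\times J}|F|^p\ \lesssim_\epsilon\ N^{\frac{(n-1)p}{2}+\epsilon p}\,\|\phi\|_2^p
$$
for each such $J$ and to sum. To prove this, pick integers $M_1,\dots,M_{n-1}\asymp N$ and set $\widetilde Q:=\prod_{i=1}^{n-1}[0,M_i\tfrac{c_0N}{\sqrt{\theta_i}}]$, which is \emph{exactly} a disjoint union of $\prod_iM_i\asymp N^{n-1}$ translates of $Q_0$; by periodicity $\int_{Q_0\times J}|F|^p=(\prod_iM_i)^{-1}\int_{\widetilde Q\times J}|F|^p$. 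Now $\widetilde Q\times J$ has all sides $\asymp N^2$, hence lies inside some ball $B_R$ with $R\asymp N^2\gtrsim\delta^{-1}$, and Theorem \ref{thmmmm2} yields $\int_{B_R}|F|^p\lesssim_\epsilon|B_R|\,\delta^{(\frac{n+1}{2p}-\frac{n-1}{4}-\epsilon)p}\|\phi\|_2^p$. Since $|B_R|\asymp N^{2n}$ and $\delta\asymp N^{-2}$, the right-hand side is $\lesssim_\epsilon N^{\,n-1+\frac{(n-1)p}{2}+2\epsilon p}\|\phi\|_2^p$, and dividing by $\prod_iM_i\asymp N^{n-1}$ gives the claimed bound for $\int_{Q_0\times J}|F|^p$. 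Summing over the $\lesssim|I|$ intervals $J$ and undoing the change of variables produces $\|e^{it\Delta}\phi\|_{L^p(\T^{n-1}\times I)}^p\lesssim_\epsilon|I|\,N^{-(n+1)+\frac{(n-1)p}{2}+2\epsilon p}\|\phi\|_2^p$, which is \eqref{EE52} after relabelling $\epsilon$. (Specializing $\theta_i=1$, $I=\T$ recovers \eqref{EE47}, as promised.)

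The main thing to appreciate is why losing time-periodicity is harmless: Theorem \ref{thmmmm2} is a \emph{local} statement, valid on every ball of radius $\gtrsim\delta^{-1}$, so cutting time into windows of the natural length $\delta^{-1}\asymp N^2$ costs exactly the factor $|I|$ and nothing more once $|I|\gtrsim1$. The two points that require care are: (i) the diagonal straightening must distort neither the $\delta^{1/2}$-separation nor the scale $\delta$ by worse than bounded factors, which is precisely what $1/2<\theta_i<2$ buys and is the source of the uniformity in $\theta_i$; and (ii) one must genuinely exploit the \emph{spatial} periodicity in order to trade the ball-scale $L^p$-average supplied by Theorem \ref{thmmmm2} for an integral over the thin slab $Q_0\times J$ — skipping this step would replace $N^{\frac{(n-1)p}{2}}$ by $N^{\,n-1+\frac{(n-1)p}{2}}$, i.e.\ cost the spurious factor $N^{(n-1)/p}$ and yield only a Stein--Tomas-type exponent.
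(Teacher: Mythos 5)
Your proof is correct and follows essentially the same route as the paper: rescale the frequencies by the diagonal map onto a $\sim N^{-1}$-separated subset of $P^{n-1}$, change variables, exploit periodicity in the spatial variables, and apply Theorem \ref{thmmmm2} with $\delta\asymp N^{-2}$ (note only the harmless typo that the last coordinate of $\zeta$ must carry the factor $(c_0N)^{-2}$ rather than $(c_0N)^{-1}$ for $\zeta$ to lie on $P^{n-1}$ and for $\zeta\cdot X$ to reproduce the phase). The sole difference is bookkeeping: you slice time into $\lesssim|I|$ windows of length $N^2$ and invoke the theorem on balls of radius $\asymp N^2$, whereas the paper applies it once on a single ball of radius $\sim N^2|I|$; both are legitimate since Theorem \ref{thmmmm2} holds for every $R\gtrsim\delta^{-1}$, and both yield the same factor $|I|^{1/p}$.
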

\begin{proof}
For $-N\le \xi_1,\ldots ,\xi_{n-1}\le N$ define $\eta_i=\frac{\theta_i^{1/2}\xi_i}{4N}$ and  $a_{\eta}=\hat{\phi}(\xi)$. A simple change of variables shows that
$$\int_{\T^{n-1}\times I}|e^{it\Delta}\phi|^p\lesssim $$$$\frac{1}{N^{n+1}}\int_{|y_1|,\ldots,|y_{n-1}|\le 8N \atop{y_n\in I_{N^2}}}|\sum_{\eta_1,\ldots,\eta_{n-1}}a_\eta e(y_1\eta_1+\ldots +y_{n-1}\eta_{n-1}+y_n(\eta_1^2+\ldots \eta_{n-1}^2))|^pdy_1\ldots dy_n,$$
where $I_{N^2}$ is an interval of length $\sim N^2|I|$.
By periodicity in the $y_1,\ldots,y_{n-1}$ variables we bound the above by
$$\frac{1}{N^{n+1}(N|I|)^{n-1}}\int_{B_{N^2|I|}}|\sum_{\eta_1,\ldots,\eta_{n-1}}a_\eta e(y_1\eta_1+\ldots +y_{n-1}\eta_{n-1}+y_n(\eta_1^2+\ldots \eta_{n-1}^2))|^pdy_1\ldots dy_n,$$
for some ball $B_{N^2|I|}$ of radius $\sim N^2|I|$. Our result will be clear if we note that the points $$(\eta_1,\ldots,\eta_{n-1},\eta_1^2+\ldots \eta_{n-1}^2)$$ are $\sim \frac1{N}$ separated on $P^{n-1}$ and then apply Theorem \ref{thmmmm2} with $R\sim N^2|I|$.
\end{proof}
\bigskip

Note that without additional assumptions on $\theta_i$, Theorem \ref{thmmmm3} is sharp up to the $\epsilon$ loss, as the lattice case  $\theta_1=\ldots=\theta_{n-1}=1$  shows.

When $n=4$ Theorem \ref{thmmmm3} answers the Problem on page 3 in \cite{Bo4}. When $n=3$ we get
\begin{equation}
\label{EE59}
\|e^{it\Delta}\phi\|_{L^{4}(\T^{2}\times I)}\lesssim_{\epsilon} N^{\frac18+\epsilon}|I|^{1/4}\|\phi\|_2,
\end{equation}
by using H\"older, the $L^3$ bound from Theorem \ref{thm:Bor} and the $L^5$ bound \eqref{EE52}. This improves on the recent result in \cite{demirb}. See also the comment at the end of this section. There are further improvements of the results from \cite{CWW} and \cite{GOW} for $n\ge 3$, that are immediate consequences of our Theorem \ref{thmmmm3}.

\vspace{0.4in}

In the second part of the paper we take a different perspective, replace the hard analysis with soft incidence theory and analyze the following weaker form of Conjecture \ref{c1}
\begin{conjecture}[Limit discrete restriction]
\label{c3}
Let $\Lambda\subset S$ be a  $\delta^{1/2}$- separated set, where $S$ is either $P^{n-1}$ or $S^{n-1}$. Then for each $\epsilon>0$ and ball $B_R$ of large enough radius $R$ we have
\begin{equation}
\label{EE55}
(\frac{1}{|B_R|}\int_{B_R}|\sum_{\xi\in \Lambda}a_\xi e(\xi \cdot\x)|^{p})^{1/p}\lesssim_{\epsilon} \delta^{\frac{n+1}{2p}-\frac{n-1}{4}-\epsilon}\|a_\xi\|_2
\end{equation}
if $p\ge \frac{2(n+1)}{n-1}$, and
\begin{equation}
\label{EE56}
(\frac{1}{|B_R|}\int_{B_R}|\sum_{\xi\in \Lambda}a_\xi e(\xi \cdot\x)|^{p})^{1/p}\lesssim_{\epsilon} \delta^{-\epsilon}\|a_\xi\|_2,
\end{equation}
if $1\le p<  \frac{2(n+1)}{n-1}$.
\end{conjecture}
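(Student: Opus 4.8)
\smallskip
\noindent{\bf A proof proposal.} The plan is to trade the oscillatory sum for a counting problem and then feed it into incidence geometry. The feature of the \emph{limiting} formulation that makes this possible is that, on letting $R\to\infty$ and using the elementary orthogonality $\frac{1}{|B_R|}\int_{B_R}e(v\cdot\x)\,d\x\to 1$ if $v=0$ and $\to 0$ otherwise (for each fixed $v\in\R^n$), the $p$-th power of the left side of \eqref{EE55}--\eqref{EE56}, with $p=2k$ an even integer, converges to the \emph{exact} additive energy
$$E_k(\Lambda;a)=\sum_{\substack{\xi_1,\ldots,\xi_{2k}\in\Lambda\\ \xi_1+\cdots+\xi_k=\xi_{k+1}+\cdots+\xi_{2k}}}a_{\xi_1}\cdots a_{\xi_k}\,\overline{a_{\xi_{k+1}}\cdots a_{\xi_{2k}}}.$$
No $\delta$-neighbourhood survives in the limit, which is precisely what makes Conjecture \ref{c3} weaker than Conjecture \ref{c1}.

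Since $E_k(\Lambda;a)\ge 0$ and, by Cauchy--Schwarz, is largest when $a_\xi\in\{0,1\}$, the even-exponent cases of \eqref{EE55}--\eqref{EE56} reduce to counting additive $2k$-tuples in $\Lambda$. Rescaling $\Lambda$ to be $1$-separated on a dilated copy of $P^{n-1}$ (so that $|\Lambda|\lesssim\delta^{-(n-1)/2}$), the goal becomes a Vinogradov-type mean value bound: $E_k(\Lambda)\lesssim_\epsilon|\Lambda|^{2k-\frac{n+1}{n-1}+\epsilon}$ when $2k\ge\frac{2(n+1)}{n-1}$, and $E_k(\Lambda)\lesssim_\epsilon|\Lambda|^{k+\epsilon}$ (essentially the diagonal) when $2k<\frac{2(n+1)}{n-1}$. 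The remaining exponents $p$, and the critical index itself, then follow by interpolating these even-exponent bounds against the trivial $p=\infty$ estimate and the Stein--Tomas estimate.

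The heart of the matter is the energy bound, which I would obtain from the Szemer\'edi--Trotter theorem and its higher-dimensional point--variety relatives. Writing $\xi_i=(\omega_i,|\omega_i|^2)$ for the paraboloid, the constraint $\xi_1+\cdots+\xi_k=\xi_{k+1}+\cdots+\xi_{2k}$ is the conjunction of the $n-1$ linear equations $\sum_{i\le k}\omega_i=\sum_{i>k}\omega_i$ and the single quadratic equation $\sum_{i\le k}|\omega_i|^2=\sum_{i>k}|\omega_i|^2$. For $k=2$ this is a point--sphere incidence count: once $u=\omega_1+\omega_2$ is fixed, the admissible $\omega_1$ with a prescribed value of $|\omega_1|^2+|\omega_2|^2$ lie on a sphere centred at $u/2$, so $E_2(\Lambda)$ is a sum over centres and radii of squared incidence counts between the projected point set and a family of spheres; Szemer\'edi--Trotter (the classical point--circle bound when $n=3$, and the known point--sphere incidence estimates in higher dimensions, tensored over the remaining coordinates) then controls this by the asserted power of $|\Lambda|$. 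For $k\ge 3$ one peels off one summation variable at a time, reducing $E_k$ to a weighted $E_{k-1}$ plus an incidence term of the same shape; this is the ``additive energy $\leftrightarrow$ Szemer\'edi--Trotter'' dictionary promised in the abstract.

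The main obstacle I anticipate is that Szemer\'edi--Trotter is sharp for Cartesian grids, and a grid sits (after dilation) inside $P^{n-1}$, so the incidence bound \emph{alone} cannot establish Conjecture \ref{c3} in full once $n\ge 3$: it is lossy at the even exponents, and in any case $\frac{2(n+1)}{n-1}$ is generally not an even integer. The way forward --- and the genuine aim here --- is to interpolate the sharp incidence/energy bound at a well-chosen even exponent against the available analytic decouplings, namely Bourgain's Theorem \ref{thm:Bor} at $p=\frac{2n}{n-1}$ and the new supercritical decoupling of Theorem \ref{thmmm1} (equivalently the discrete estimate \eqref{EE49}) for $p>\frac{2(n+2)}{n-1}$. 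For $\Lambda$ a rescaled lattice this recovers the best known discrete restriction and Strichartz bounds for the classical torus --- in particular, for $n=3$, the full subcritical range $p\le\frac{2(n+1)}{n-1}$ in \eqref{EE46}, which \cite{Bo3} obtained by counting lattice points on circles --- with no appeal to number theory. Two further points need attention but are not serious: justifying the $R\to\infty$ limit for an \emph{arbitrary}, not necessarily lattice, $\delta^{1/2}$-separated $\Lambda$, which is harmless because the orthogonality above requires no Diophantine input; and tracking the uniformity of the incidence bounds in the separation scale after rescaling.
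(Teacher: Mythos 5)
You are attempting to prove a statement that the paper itself records as a conjecture: Conjecture \ref{c3} remains open there except for one special case, Theorem \ref{thmmmm4}, which is $n=3$, $S=P^2$ at the critical exponent $p=4$ (in a form that even drops the separation hypothesis). Your overall reduction --- let $R\to\infty$, use orthogonality to convert the $2k$-th moment into the additive energy $\E_k$, and reduce by restricted-type interpolation to indicator coefficients --- is exactly the paper's starting point in Section \ref{s8}. The genuine gap is in the incidence step. For $k=2$, $n=3$ you fix $u=\omega_1+\omega_2$, observe that the admissible points lie on circles, and invoke ``Szemer\'edi--Trotter (the classical point--circle bound)''. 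No such theorem exists: the Szemer\'edi--Trotter-quality bound for point--circle incidences is itself an open conjecture, which the paper states explicitly (``the point-circle and the unit distance conjectures are thought to be rather hard, and only partial results are known''), and it notes that \eqref{e1} \emph{would} follow from that conjecture. The paper's actual proof avoids it by a further structural observation: when $\eta_1+\eta_2=\eta_3+\eta_4$ on $P^2$, the projected pairs $P_1,P_2$ and $P_3,P_4$ are \emph{diametrically opposite} on the common circle \eqref{EEE2}, so each additive quadruple produces a distinct right angle among the projected points, and the Pach--Sharir repeated-angle theorem (Theorem \ref{P-Sthm}, a genuine consequence of Szemer\'edi--Trotter for lines) gives $\E_2(\Lambda')\lesssim|\Lambda'|^2\log|\Lambda'|$. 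Without this device, or the unproven circle conjecture, your flagship case does not close.

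The rest of the program overclaims in ways the paper itself documents. The proposed peeling of $\E_k$ down to $\E_{k-1}$ plus an incidence term does not preserve the required exponents: the paper carries out precisely this computation for $n=2$, $k=3$, and two applications of Szemer\'edi--Trotter yield only $\E_3(\Lambda)\lesssim_\epsilon|\Lambda|^{7/2+\epsilon}$ against the needed $|\Lambda|^{3+\epsilon}$; this is left as an open question. The right-angle trick that rescues $n=3$ fails for the sphere $S^2$ (point sets in $\R^3$ can determine $N^{7/3}$ right angles) and in higher dimensions (there are sets in $\R^4$ with $N^3$ right angles, by \cite{AS}), and for $n\ge 4$ the critical exponent $\frac{2(n+1)}{n-1}$ is not an even integer, so interpolating even-moment energy bounds against $p=\infty$ and Stein--Tomas cannot reach the critical or subcritical range. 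As written, then, your proposal reproduces the paper's reduction to additive energies but establishes none of the cases of the conjecture; the single case the paper does settle hinges on the diametrically-opposite/right-angle observation that is missing from your argument.
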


When $n=3$ and $S=P^2$ we prove a stronger version of this conjecture, one where the bound does not depend on the separation between points.
\begin{theorem}
\label{thmmmm4}
Let $\Lambda\subset P^2$ be an arbitrary collection of points. We do not assume these points are $\delta^{1/2}$ separated. Then for $R$ large enough, depending only on the geometry of $\Lambda$ and on its cardinality $|\Lambda|$ we have
\begin{equation}
\label{EE61}
(\frac{1}{|B_R|}\int_{B_R}|\sum_{\xi\in \Lambda}a_\xi e(\xi \cdot\x)|^{4})^{1/4}\lesssim_{\epsilon} |\Lambda|^{\epsilon}\|a_\xi\|_2.
\end{equation}
\end{theorem}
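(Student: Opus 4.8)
The plan is to expand the $L^4$ norm combinatorially and reduce matters to an incidence bound of Szemer\'edi--Trotter type. Writing $F(\x)=\sum_{\xi\in\Lambda}a_\xi e(\xi\cdot\x)$, we have
\[
\frac{1}{|B_R|}\int_{B_R}|F|^4 = \frac{1}{|B_R|}\int_{B_R}\Big|\sum_{\xi,\xi'\in\Lambda}a_\xi\overline{a_{\xi'}}\,e((\xi-\xi')\cdot\x)\Big|^2,
\]
so the quantity is governed by the number of additive quadruples $(\xi_1,\xi_2,\xi_3,\xi_4)\in\Lambda^4$ with $\xi_1+\xi_2=\xi_3+\xi_4$. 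Indeed, by choosing $R$ large enough (depending only on the finitely many differences $(\xi_1+\xi_2)-(\xi_3+\xi_4)$, hence only on the geometry of $\Lambda$ and on $|\Lambda|$), the off-diagonal oscillatory terms $\frac{1}{|B_R|}\int_{B_R}e(v\cdot\x)\,d\x$ with $v\neq 0$ become negligible, and the main term is exactly
\[
\frac{1}{|B_R|}\int_{B_R}|F|^4 \;\lesssim\; \sum_{\substack{\xi_1+\xi_2=\xi_3+\xi_4 \\ \xi_i\in\Lambda}} |a_{\xi_1}||a_{\xi_2}||a_{\xi_3}||a_{\xi_4}| \;=\; \sum_{v}\Big(\sum_{\xi_1+\xi_2=v}|a_{\xi_1}||a_{\xi_2}|\Big)^2 .
\]

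The next step is to bound this energy by $|\Lambda|^\epsilon\|a_\xi\|_2^4$. This is where the paraboloid enters: if $\xi=(\xi_1,\xi_2,\xi_1^2+\xi_2^2)$ and $\xi'=(\xi_1',\xi_2',(\xi_1')^2+(\xi_2')^2)$ lie on $P^2$ and $\xi+\xi'=v$, then the sum of the first two coordinates pins down $\xi_1+\xi_1'$ and $\xi_2+\xi_2'$, while the third coordinate pins down $\xi_1^2+\xi_2^2+(\xi_1')^2+(\xi_2')^2$; together these force the unordered pair $\{\xi,\xi'\}$ to lie on a fixed circle in the plane (the intersection of a plane with a sphere, after projecting to the first two coordinates). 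Thus counting additive quadruples on $P^2$ reduces to counting pairs of points of (the projection of) $\Lambda$ on circles, i.e. to a point-circle incidence problem. By the Szemer\'edi--Trotter theorem for circles (equivalently, for pseudolines, since two points and the "radius" constraint determine a bounded-degree family), the number of incidences between $m$ points and $k$ circles is $O((mk)^{2/3}+m+k)$, and a dyadic-pigeonholing argument over the multiplicity $\mu(v)=\#\{(\xi,\xi')\in\Lambda^2:\xi+\xi'=v\}$ yields
\[
\sum_v \mu(v)^2 \;\lesssim\; |\Lambda|^{2+\epsilon},
\]
which is the sharp additive energy bound for points on the paraboloid in two dimensions. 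Combining this with Cauchy--Schwarz in the weights $|a_\xi|$ (distributing the four factors as $|a_{\xi_1}|^2|a_{\xi_3}|^2$ and $|a_{\xi_2}|^2|a_{\xi_4}|^2$ and using the energy bound to control the number of surviving quadruples per fixed pair) gives the claimed $\|F\|_{L^4(\frac{d\x}{|B_R|})}\lesssim_\epsilon|\Lambda|^{\epsilon}\|a_\xi\|_2$.

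Two technical points need care. First, one must make the passage from $\frac{1}{|B_R|}\int_{B_R}|F|^4$ to the pure additive-quadruple sum quantitative: since $\Lambda$ is a fixed finite set, the nonzero values of $(\xi_1+\xi_2)-(\xi_3+\xi_4)$ form a finite set bounded away from $0$, so $\frac{1}{|B_R|}|\int_{B_R}e(v\cdot\x)|\to 0$ as $R\to\infty$ uniformly over these finitely many $v$; choosing $R$ past the threshold absorbs the error into the $|\Lambda|^\epsilon$. Second, and this is the main obstacle, is extracting the energy bound $\sum_v\mu(v)^2\lesssim|\Lambda|^{2+\epsilon}$ cleanly from Szemer\'edi--Trotter: one dyadically decomposes according to how many representations $v$ has, observes that the relevant level set of $v$'s corresponds to rich circles, and applies the incidence bound after verifying that the relevant circles form a family with the two-point bounded-multiplicity property required for the curve version of Szemer\'edi--Trotter. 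The rest — Cauchy--Schwarz bookkeeping and the choice of $R$ — is routine. I expect the circle-incidence reduction (recognizing that the additive constraint on $P^2$ is exactly a point-on-circle constraint, and that the resulting circles are "well-behaved") to be the conceptual heart of the argument, with everything else being standard harmonic-analytic and combinatorial manipulation.
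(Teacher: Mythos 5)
Your reduction of the left-hand side to the additive energy is the same as the paper's: expand the fourth power, note that the nonzero values of $\eta_1+\eta_2-\eta_3-\eta_4$ are bounded below by some $\upsilon>0$ depending only on $\Lambda$, and take $R\gtrsim |\Lambda|^2/\upsilon$ so that the off-diagonal oscillatory integrals are negligible. You have also correctly identified the key geometric fact that an additive pair $\xi+\xi'=v$ on $P^2$ projects to a pair of points on a fixed circle determined by $v$. However, the step where you close the argument --- ``by the Szemer\'edi--Trotter theorem for circles \dots the number of incidences between $m$ points and $k$ circles is $O((mk)^{2/3}+m+k)$'' --- is a genuine gap: this bound for general circles is an \emph{open conjecture}, not a theorem (the paper states this explicitly, and notes that the energy bound ``also follows immediately if one assumes the circle-point incidence conjecture''). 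Your parenthetical justification is incorrect: general circles are not reducible to pseudolines, because the two-point axiom of the curve version of Szemer\'edi--Trotter fails --- infinitely many circles pass through any two given points, and the circles arising here have arbitrary centers $(A/2,B/2)$ and arbitrary radii, so no bounded-multiplicity property is available. (Contrast this with the $n=2$ case later in the paper, where the relevant circles all have centers on a fixed line, or with unit circles; there the axiom does hold.)

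The missing idea is the paper's one new observation: when $\xi+\xi'=v$, the two projected points are not merely on the circle associated to $v$ --- they are \emph{diametrically opposite} on it. Hence each additive quadruple $\eta_1+\eta_2=\eta_3+\eta_4$ produces a right angle among the projected points (the angle at $P_3$ subtended by $P_1,P_2$, since $P_1P_2$ is a diameter of a circle through $P_3$), and distinct quadruples give distinct angle configurations. The energy bound $\E_2(\Lambda')\lesssim |\Lambda'|^2\log|\Lambda'|$ then follows from the Pach--Sharir theorem that a given angle repeats $O(N^2\log N)$ times among $N$ points in the plane (itself a consequence of Szemer\'edi--Trotter for \emph{lines}). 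Without this observation your incidence step cannot be completed with known results. A secondary, fixable issue: your passage from the energy bound (i.e., the case $a_\xi=1_{\Lambda'}$) to general coefficients via ``Cauchy--Schwarz in the weights'' does not work as sketched --- the count of pairs $(\xi_2,\xi_4)$ with a prescribed difference can be as large as $|\Lambda|$ --- and should instead be done by restricted-type interpolation over subsets $\Lambda'\subset\Lambda$, as the paper does.
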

Due to periodicity, this recovers Bourgain's result \cite{Bo3} for lattice points on the paraboloid $P^2(N)$. Our argument uses no number theory. Theorem \ref{thmmmm4} also shows that the exponent $\frac18+\epsilon$ in \eqref{EE59} can be replaced with just $\epsilon$ if $|I|$ is large enough, depending only on $N$ and the diophantine properties of $\theta_i$.

Note that \eqref{EE61} is a critical result, and thus provides further evidence that the General Discrete Restriction Conjecture \ref{c1} should be true. A tantalizing question concerns the case $n=2$ and will be described in Section \ref{s8}.

\bigskip
\section{Norms and wave packet decompositions}\label{s3}
\bigskip

Sections \ref{s3} - \ref{s7} are devoted to proving Theorem \ref{thmmm1}. Our presentation follows closely the argument in \cite{LW}. Something similar has been done in \cite{GSS} for the cone, but a weaker\footnote{While both the bilinear and the multilinear estimates are sharp, one can make stronger use of the multilinear estimate} bilinear estimate, rather than \eqref{Snew29} was used there.

We remind the reader that we use $C$ to denote various constants that are allowed to depend on $n,p,\alpha$, but never on $\delta$. Let $\delta\le 1$. To prove Theorem \ref{thmmm1} we can and  will implicitly assume $\delta$ to be a dyadic number, $\delta=2^{-k}$ for $k\in\N$.

We will use $|\cdot|$ to denote both the Lebesgue measure on $\R^n$ and the cardinality of finite sets. We write $A\lesssim B$ if $A\le CB$ for some large enough $C$ that is allowed  to depend only on $n$, $p$ and $\alpha$.
We write $A\lessapprox B$ if $A\le C\log (\frac1\delta)^CB$
for some large enough $C$ that is allowed  to depend only  on $n$, $p$ and $\alpha$.

For $2\le p\le \infty$  we define the norm
$$\|f\|_{p,\delta}=(\sum_{\theta\in\P_\delta}\|f_\theta\|_p^2)^{1/2},$$
where $f_\theta$ is the Fourier projection of $f$ onto $\theta$.
We note the following immediate consequence of H\"older's inequality
\begin{equation}
\label{EE1}
\|f\|_{p,\delta}\le \|f\|_{2,\delta}^{\frac2p}\|f\|_{\infty,\delta}^{1-\frac2p}
\end{equation}
and the fact that if $\supp(\hat{f})\subset \A_\delta$ then
$$\|f\|_{2,\delta}\sim\|f\|_2,$$

Let $\phi:\R^n\to\R$ be given by
$$\phi(x)=(1+|x|^2)^{-M},$$
for some $M$ large enough compared to $n$, whose value will become clear from the argument.

Let $\psi:\R^n\to\R$ be a rapidly decaying function such that $\psi=\eta^2$, where $\widehat{\eta}$ is supported in $B(0,\frac14)$, $|\psi|\gtrsim 1$ on $B(0,n^{1/2})$ and such that the $2^{-C}\Z^n$ translations of $\psi$ form a partition of unity for some positive integer $C$.
\bigskip

Unless specified otherwise, $f$ will always refer to a Schwartz function.
\begin{definition}
Let $N$ be a real number greater than 1. An $N$-tube $T$ is an $N^{1/2}\times\ldots N^{1/2}\times N$  rectangular parallelepiped in $\R^n$ which has dual orientation to some $\theta=\theta(T)\in \P_\delta$. We call a collection of $N$-tubes  separated if no more than $C$ tubes with a given orientation overlap.

An $N$-cube is a dyadic cube whose side length equals  $N$.
\end{definition}

Define $\phi_T=\phi\circ a_T$, where $a_T$ is the affine function mapping $T$ to the unit cube in $\R^n$ centered at the origin.
For each $N$-cube $Q$ in $\R^n$  we define $\psi_Q=\psi\circ a_Q$. We will use the fact that if $\widehat{f}$ is supported in $\A_\delta$, then $\widehat{f\psi_Q}$ is supported in $\A_{\frac\delta\sigma}$, for each $\sigma\delta^{-1}$-cube $Q$ and $\sigma\le \frac12$. We will also rely on the fact that $\inf_{x\in Q}|\psi_Q(x)|\gtrsim 1$ for each $Q$.

\begin{definition}
An $N$-function is a function $f:\R^n\to \C$ such that
$$f=\sum_{T\in \H(f)}f_T$$
where $\H(f)$ consists of finitely many separated $N$-tubes $T$ and moreover
$$|f_T|\le \phi_T,$$
$$\|f_T\|_p\sim |T|^{1/p},\;1\le p\le\infty$$
and
$$\supp (\widehat{f_T})\subset \theta(T).$$
A subfunction of $f$ will be a function of the form
$$\sum_{T\in \H'}f_T,$$
where $\H'\subset \H(f)$.

For $\theta\in \P_{1/N}$ let
$\H(f,\theta)$ denote the $N$-tubes in $\H(f)$ dual to $\theta$ .
An $N$-function is called balanced if
$|\H(f,\theta)|\le 2 |\H(f,\theta')|$
whenever $\H(f,\theta), \H(f,\theta')\not=\emptyset.$
\end{definition}
The $\|\cdot\|_{p,\delta}$ norms of $N$-functions are asymptotically determined by their  plate distribution over the sectors $\theta$.
\begin{lemma}
For each $N$-function $f$ and for  $2\le p\le \infty$
\begin{equation}
\label{EE2}
\|f\|_{p,1/N}\sim N^{\frac{n+1}{2p}}(\sum_{\theta}|\H(f,\theta)|^{\frac2p})^{1/2}.
\end{equation}
If the $N$-function is balanced then
\begin{equation}
\label{EE24}
\|f\|_{p,1/N}\sim N^{\frac{n+1}{2p}}M(f)^{\frac{1}{2}-\frac1p}|\H(f)|^{1/p},
\end{equation}
where $M(f)$ is the number of sectors $\theta$ for which $\H(f,\theta)\not=\emptyset$.
\end{lemma}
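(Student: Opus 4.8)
The plan is to unwind the definitions and reduce the claim to a counting statement about tubes. Recall that by definition $\|f\|_{p,1/N}^2=\sum_\theta \|f_\theta\|_p^2$, where $f_\theta$ is the Fourier projection onto $\theta$. The key observation is that for an $N$-function, $\supp(\widehat{f_T})\subset\theta(T)$, so when we project $f=\sum_T f_T$ onto a fixed $\theta\in\P_{1/N}$ the only surviving terms are those tubes $T$ with $\theta(T)=\theta$; that is, $f_\theta=\sum_{T\in\H(f,\theta)}f_T$. Thus the problem decouples across sectors, and it suffices to estimate $\|\sum_{T\in\H(f,\theta)}f_T\|_p$ from above and below by $N^{\frac{n+1}{2p}}|\H(f,\theta)|^{1/p}$ (up to constants depending only on $n,p$), after which \eqref{EE2} follows by summing the $p$-th powers is not quite right — rather by plugging into the square sum, since $(\sum_\theta (N^{\frac{n+1}{2p}}|\H(f,\theta)|^{1/p})^2)^{1/2}=N^{\frac{n+1}{2p}}(\sum_\theta|\H(f,\theta)|^{2/p})^{1/2}$.

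For the per-sector estimate, first I would note that the tubes in $\H(f,\theta)$ all share a common orientation, and by the separation hypothesis at most $C$ of them overlap at any point. The upper bound $\|\sum_{T}f_T\|_p\lesssim N^{\frac{n+1}{2p}}|\H(f,\theta)|^{1/p}$ should come from writing $|\sum_T f_T|\le\sum_T\phi_T$, controlling the overlap so that $\|\sum_T\phi_T\|_p^p\lesssim\sum_T\|\phi_T\|_p^p$ (here the bounded overlap is exactly what lets one pass from the $p$-norm of the sum to the sum of $p$-norms, using that $\sum_T\phi_T\lesssim C^{1-1/p}(\sum_T\phi_T^p)^{1/p}$ pointwise isn't literally true but the finite-overlap version is), together with $\|\phi_T\|_p\sim|T|^{1/p}=(N^{(n+1)/2})^{1/p}$, so the normalization $N^{(n+1)/2}$ for the volume of an $N$-tube is what produces the $N^{\frac{n+1}{2p}}$ factor. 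For the lower bound, the tails of $\phi_T$ point in a single direction, so one can find a spatial region — a slab transverse to the common orientation, or simply use almost-orthogonality / the dual $L^{p'}$ pairing against a well-chosen function — where the contributions of the $|\H(f,\theta)|$ tubes do not cancel, again giving $\gtrsim N^{\frac{n+1}{2p}}|\H(f,\theta)|^{1/p}$; alternatively one invokes $\|f_T\|_p\sim|T|^{1/p}$ for a single tube plus the fact that distinct translates in a single direction have essentially disjoint bulk supports.

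For the balanced case \eqref{EE24}, the point is purely bookkeeping: if $f$ is balanced then all nonempty $|\H(f,\theta)|$ are comparable, say all $\sim m:=|\H(f)|/M(f)$ where $M(f)$ is the number of active sectors. Then $\sum_\theta|\H(f,\theta)|^{2/p}\sim M(f)\cdot m^{2/p}=M(f)^{1-2/p}(M(f)m)^{2/p}=M(f)^{1-2/p}|\H(f)|^{2/p}$, and taking the square root and multiplying by $N^{\frac{n+1}{2p}}$ gives exactly $N^{\frac{n+1}{2p}}M(f)^{\frac12-\frac1p}|\H(f)|^{1/p}$. I expect the main obstacle to be the careful justification of the two-sided per-sector estimate — in particular the lower bound, where one must genuinely use the one-dimensional nature of the tube tails (or a transversality/almost-orthogonality argument) to rule out cancellation among the $f_T$; the upper bound's finite-overlap reduction and all the volume normalizations are routine once that is in place, and should only cost constants depending on $C$, $n$, $p$.
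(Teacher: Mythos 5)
Your reduction to a single sector (via $f_\theta=\sum_{T\in\H(f,\theta)}f_T$, up to the finite overlap of the regions $\theta$), the observation that the square sum over $\theta$ then produces exactly the right-hand side of \eqref{EE2}, and the bookkeeping for the balanced case \eqref{EE24} all match the paper. Where you diverge is the two-sided per-sector estimate $\|\sum_{T\in\H(f,\theta)}f_T\|_p\sim |T|^{1/p}|\H(f,\theta)|^{1/p}$: you attack $L^p$ directly (finite overlap for the upper bound, a non-cancellation argument for the lower bound), whereas the paper interpolates. It records the three endpoint facts $\|f\|_1\lesssim |T||\H(f)|$, $\|f\|_\infty\lesssim 1$ and $\|f\|_2\sim |T|^{1/2}|\H(f)|^{1/2}$, and then applies H\"older twice to get
$\|f\|_2^{2(p-1)/p}\|f\|_1^{(2-p)/p}\le\|f\|_p\le\|f\|_1^{1/p}\|f\|_\infty^{1/p'}$;
since $p\ge 2$ the exponent $(2-p)/p$ is nonpositive, so the $L^1$ upper bound feeds into the left-hand side and both bounds collapse to $(|T||\H(f)|)^{1/p}$. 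The payoff of that route is that the only genuine lower bound one ever needs is the $L^2$ one, where (almost-)orthogonality of the wave packets is the natural and available tool; your direct $L^p$ lower bound is the step you yourself flag as delicate, and rightly so.

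Indeed, your "essentially disjoint bulk supports" argument has a real soft spot: on the bulk of a tube $T$, the sum of the tails $\sum_{T'\ne T}\phi_{T'}$ of nearby (even overlapping, since up to $C$ tubes may overlap) tubes need not be small compared with $|f_T|\le\phi_T\le 1$, so pointwise non-cancellation does not follow from the listed axioms alone; one can even cook up $f_T=-f_{T'}$ for two overlapping tubes consistent with the definition. To make either argument airtight one must use that the $f_T$ arising from the wave packet decomposition of Lemma \ref{WPD} are an almost-orthogonal family, which is exactly what validates the $L^2$ lower bound the paper's interpolation rests on. So: same skeleton, but you should either import the paper's double-H\"older trick to localize the difficulty at $p=2$, or state explicitly the almost-orthogonality you are using for the $L^p$ lower bound.
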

\begin{proof}
It suffices to prove \eqref{EE2} when  $\H(f)=\H(f,\theta)$ for some $\theta$. We first observe the trivial estimates $\|f\|_1\lesssim |T||\H(f)|$, $\|f\|_\infty\lesssim 1$ and $\|f\|_2\sim |T|^{1/2}|\H(f)|^{1/2}$. Applying H\"older twice we get
$$\|f\|_2^{\frac{2(p-1)}{p}}\|f\|_1^{\frac{2-p}{p}}\le\|f\|_p\le \|f\|_1^{1/p}\|f\|_\infty^{1/p'},$$
which is exactly what we want.
\end{proof}
The crucial role played by balanced $N$-functions is encoded by
\begin{lemma}
\label{lemmabalancednfunc}

(i) Each $N$-function $f$ can be written as the sum of $O(\log|\H(f)|)$ balanced $N$-functions.

(ii) For each balanced $N$-function $f$ and $2\le p\le \infty$ we have the converse of \eqref{EE1}, namely
\begin{equation}
\label{EE10}
\|f\|_{p,1/N} \sim \|f\|_{2,1/N}^{\frac2p}\|f\|_{\infty,1/N}^{1-\frac2p}.
\end{equation}
\end{lemma}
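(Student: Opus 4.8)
\emph{Part (i).} The plan is a dyadic pigeonholing of the sectors according to how many tubes lie above each one. For every integer $j\ge 0$ set
$$\Theta_j=\{\theta\in\P_{1/N}:\ 2^j\le |\H(f,\theta)|<2^{j+1}\},\qquad f_j=\sum_{\theta\in\Theta_j}\ \sum_{T\in\H(f,\theta)}f_T .$$
Since $1\le |\H(f,\theta)|\le |\H(f)|$ whenever $\H(f,\theta)\neq\emptyset$, only the indices $0\le j\le\log_2|\H(f)|$ can occur, so there are $O(\log|\H(f)|)$ nonzero pieces, and every $T\in\H(f)$ is counted in exactly one of them, i.e. $f=\sum_j f_j$. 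Each $f_j$ is a subfunction of $f$, hence again an $N$-function, with $\H(f_j)=\bigcup_{\theta\in\Theta_j}\H(f,\theta)$; here one just checks that the three defining properties ($|f_T|\le\phi_T$, $\|f_T\|_p\sim|T|^{1/p}$, $\supp(\widehat{f_T})\subset\theta(T)$) and the separation of the tubes are inherited by any subcollection. Finally $f_j$ is balanced: if $\H(f_j,\theta)$ and $\H(f_j,\theta')$ are both nonempty then both cardinalities lie in $[2^j,2^{j+1})$, so $|\H(f_j,\theta)|<2^{j+1}\le 2\cdot 2^j\le 2|\H(f_j,\theta')|$. This gives (i).

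\emph{Part (ii).} The inequality $\|f\|_{p,1/N}\lesssim\|f\|_{2,1/N}^{2/p}\|f\|_{\infty,1/N}^{1-2/p}$ is exactly \eqref{EE1} and needs no hypothesis on $f$. For the reverse inequality, which is where balancedness enters, I would simply invoke \eqref{EE24} at the three exponents $p$, $2$ and $\infty$:
$$\|f\|_{p,1/N}\sim N^{\frac{n+1}{2p}}M(f)^{\frac12-\frac1p}|\H(f)|^{\frac1p},\qquad \|f\|_{2,1/N}\sim N^{\frac{n+1}{4}}|\H(f)|^{\frac12},\qquad \|f\|_{\infty,1/N}\sim M(f)^{\frac12}.$$
Raising the second estimate to the power $\frac2p$ and the third to the power $1-\frac2p$ and multiplying, the exponents of $N$, of $M(f)$ and of $|\H(f)|$ match term by term those in the first estimate; since $0\le\frac2p\le 1$ for $p\ge 2$, the implicit constants stay bounded under these fractional powers. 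Hence $\|f\|_{p,1/N}\sim\|f\|_{2,1/N}^{2/p}\|f\|_{\infty,1/N}^{1-2/p}$, which is \eqref{EE10}.

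\emph{Where the difficulty lies.} Honestly, neither part presents a genuine obstacle: (i) is a one-line dyadic decomposition and (ii) is pure bookkeeping on top of the already-established \eqref{EE24}. The only point worth stating carefully is \emph{why} balancedness is needed in (ii): for a general $N$-function one only has the weaker formula \eqref{EE2}, in which $\sum_\theta|\H(f,\theta)|^{2/p}$ is not a product of a power of $M(f)$ and a power of $|\H(f)|$, so the $L^p$ interpolation inequality cannot be reversed. Replacing $f$ by the balanced pieces produced in (i) is precisely the device that restores the reverse Hölder inequality, at the cost of the logarithmic factor recorded in (i); in the subsequent sections this logarithmic loss is absorbed into the $\lessapprox$ notation.
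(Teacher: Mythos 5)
Your proof is correct and follows exactly the route the paper intends: the paper's own proof consists of the two remarks that (i) is ``immediate by using dyadic ranges'' and (ii) ``follows from \eqref{EE24}'', and your argument is precisely the fleshed-out version of both — dyadic pigeonholing of the sectors by $|\H(f,\theta)|$ for (i), and evaluation of \eqref{EE24} at $p$, $2$, and $\infty$ for (ii). No discrepancies.
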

\begin{proof}
Note that $(i)$ is immediate by using dyadic ranges. Also, $(ii)$ will follow from \eqref{EE24}.
\end{proof}

In the remaining sections we will repeatedly use the fact that the contribution of $f$ to various inequalities comes from   logarithmically many $N$-functions. The basic mechanism is the following.

\begin{lemma}[Wave packet decomposition]
\label{WPD}
Assume $f$ is Fourier supported in $\A_\delta$. Then for each dyadic $0<\lambda\lesssim \|f\|_{\infty,\delta}$ there is an $N=\delta^{-1}$-function $f_\lambda$ such that
$$f=\sum_{\lambda\lesssim \|f\|_{\infty,\delta}}\lambda f_\lambda$$
and for each $2\le p<\infty$ we have
\begin{equation}
\label{EE8}
\lambda^pN^{\frac{n+1}{2}}|\H(f_\lambda)|\le \|\lambda f_\lambda\|_{p,\delta}^p\lesssim \|f\|_{p,\delta}^p.
\end{equation}
\end{lemma}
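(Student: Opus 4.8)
The plan is to build the wave packet decomposition of $f$ in two stages: first decompose each Fourier piece $f_\theta$ into its own wave packets at scale $N=\delta^{-1}$, then sort the resulting packets into dyadic groups according to the size of their coefficients, and finally renormalize each group into an honest $N$-function. First I would fix a Schwartz partition of unity adapted to a tiling of $\R^n$ by $N$-tubes dual to $\theta$: for each $\theta\in\P_\delta$, write $\R^n$ as a union of translates $T$ of a fixed $N^{1/2}\times\cdots\times N^{1/2}\times N$ box with dual orientation to $\theta$, and choose bump functions $\chi_T$ forming a partition of unity subordinate to these tubes and whose Fourier supports are small enough that $\widehat{f_\theta\chi_T}$ stays inside (a slight dilate of) $\theta$; this is possible since $\theta$ sits in a box of dimensions $\sim\delta^{1/2}\times\cdots\times\delta^{1/2}\times\delta = N^{-1/2}\times\cdots\times N^{-1/2}\times N^{-1}$. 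Setting $g_T := f_\theta\chi_T$, we have $f_\theta = \sum_T g_T$ with $\widehat{g_T}\subset\theta$, and by the usual almost-orthogonality of the $g_T$ (finite overlap of tubes of fixed orientation), $\|f_\theta\|_p^p \sim \sum_T\|g_T\|_p^p$ for $2\le p<\infty$, with $\lesssim$ for $p=\infty$ and $p=2$ coming from the localization of $\widehat{g_T}$.

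Next I would convert the $g_T$ into the normalized pieces $f_T$ with $|f_T|\le\phi_T$. Because $\widehat{g_T}$ is supported in $\theta$, $g_T$ is morally constant on $T$ and rapidly decaying off it, so $|g_T|\lesssim \big(\|g_T\|_\infty\big)\phi_T$ after absorbing Schwartz tails; write $\mu_T := \|g_T\|_\infty$, so $g_T = \mu_T \tilde f_T$ with $|\tilde f_T|\le\phi_T$ and $\widehat{\tilde f_T}\subset\theta$. The quantity $\|\tilde f_T\|_p$ is then comparable to $|T|^{1/p}$ up to constants; a genuinely scale-invariant normalization $f_T$ with $\|f_T\|_p\sim|T|^{1/p}$ for \emph{all} $p$ simultaneously requires a further (bounded) dyadic pigeonholing on the ratio $\|\tilde f_T\|_\infty/|T|^{-1}$\,, which costs only a logarithmic factor and an adjustment of $\mu_T$ by a bounded factor — harmless at the level of \eqref{EE8}. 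Now perform the dyadic decomposition: for each dyadic $\lambda>0$ let $\H_\lambda$ be the collection of tubes $T$ (over all $\theta$) with $\mu_T\sim\lambda$, set $f_\lambda := \lambda^{-1}\sum_{T\in\H_\lambda} g_T = \sum_{T\in\H_\lambda} f_T$, so that $f=\sum_\lambda \lambda f_\lambda$ and each $f_\lambda$ is, by construction, an $N$-function with $\H(f_\lambda)=\H_\lambda$. The sum runs over $\lambda\lesssim \max_T\mu_T$, and since $\mu_T=\|g_T\|_\infty\le\|f_\theta\|_\infty\lesssim\|f\|_{\infty,\delta}$ (using $\|f_\theta\|_\infty \le \|f\|_{\infty,\delta}$ for a single $\theta$), we get the stated range $\lambda\lesssim\|f\|_{\infty,\delta}$. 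One discards the finitely many $\lambda$ below a negligible threshold (controlled by the Schwartz tails times $\|f\|_{p,\delta}$), which is why the decomposition only claims dyadic $\lambda$ down to that scale.

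Finally, the two-sided estimate \eqref{EE8}. For the lower bound, apply \eqref{EE2} to the $N$-function $f_\lambda$: $\|\lambda f_\lambda\|_{p,1/N}^p = \lambda^p\|f_\lambda\|_{p,1/N}^p \gtrsim \lambda^p N^{\frac{n+1}{2}}\big(\sum_\theta|\H(f_\lambda,\theta)|^{2/p}\big)^{p/2} \ge \lambda^p N^{\frac{n+1}{2}}|\H(f_\lambda)|$, the last step being $\ell^{2/p}\hookrightarrow\ell^1$ since $2/p\le 1$. For the upper bound, one needs $\sum_\lambda\|\lambda f_\lambda\|_{p,\delta}^p\lesssim\|f\|_{p,\delta}^p$, equivalently $\sum_\lambda \lambda^p\sum_\theta\|(f_\lambda)_\theta\|_p^p \lesssim \sum_\theta\|f_\theta\|_p^p$; but for fixed $\theta$, $\sum_\lambda\lambda^p\|(f_\lambda)_\theta\|_p^p \sim \sum_\lambda\sum_{T\in\H_\lambda,\,\theta(T)=\theta}\|g_T\|_p^p = \sum_{T:\theta(T)=\theta}\|g_T\|_p^p\lesssim\|f_\theta\|_p^p$, which is exactly the almost-orthogonality from the first paragraph (here the $\lambda$-sum collapses because the $\H_\lambda$ partition the tubes). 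Summing over $\theta$ gives the claim. \emph{The main obstacle} is the first step — producing wave packets $g_T$ whose Fourier supports remain inside (a controlled dilate of) the plate $\theta$ while genuinely tiling physical space, so that the $\ell^2$-orthogonality of the $f_\theta$'s survives; this forces the partition of unity $\{\chi_T\}$ to have Fourier support of size $\ll\delta^{1/2}$ transversally and $\ll\delta$ in the distinguished direction, and one must check this is compatible with $\{\chi_T\}$ being a partition subordinate to $N$-tubes. The rest is bookkeeping: dyadic pigeonholing, a logarithmic loss hidden in $\lessapprox$, and the elementary $\ell^{2/p}\hookrightarrow\ell^1$ inequality. $\hfill\square$
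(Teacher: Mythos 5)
Your argument is correct and follows essentially the same route as the paper: there the tube-scale decomposition of each (Fourier-smoothed) $\tilde f_\theta$ is done via a windowed Fourier expansion $\sum_T\langle\tilde f_\theta,\varphi_T\rangle\varphi_T$ with $|T|^{1/2}|\varphi_T|\lesssim\phi_T$ --- so each packet is a scalar coefficient times a fixed bump and the normalizations $|f_T|\le\phi_T$, $\|f_T\|_p\sim|T|^{1/p}$ are automatic rather than needing the tail-absorption you gesture at --- after which the coefficients are pigeonholed dyadically and \eqref{EE8} is verified exactly as you do (lower bound from \eqref{EE2} and $\|\cdot\|_{l^{p/2}}\le\|\cdot\|_{l^1}$, upper bound from fixed-$\theta$ almost-orthogonality). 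One small correction: your reduction of $\|\lambda f_\lambda\|_{p,\delta}^p\lesssim\|f\|_{p,\delta}^p$ to an $l^p(l^p)$ statement is not an equivalence, since $\|\cdot\|_{p,\delta}$ is an $l^2$ sum over $\theta$ of $L^p$ norms; however, the bound your computation actually yields for each fixed $\theta$, namely $\lambda^p\|(f_\lambda)_\theta\|_p^p\lesssim\|f_\theta\|_p^p$, dominates the $l^2$ sum termwise and therefore does suffice.
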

\begin{proof}
Using a partition of unity write
$$f=\sum_{\theta\in \P_\delta}\tilde{f}_\theta$$
with $\tilde{f}_\theta=f_\theta*K_\theta$ Fourier supported in $\frac9{10}\theta$ with $\|K_\theta\|_1\lesssim 1$.
Consider a windowed Fourier series expansion for each $\tilde{f}_\theta$
$$\tilde{f}_\theta=\sum_{T\in \H_\theta}\langle \tilde{f}_\theta,\varphi_T\rangle\varphi_T,$$
where $\varphi_T$ are $L^2$ normalized Schwartz functions Fourier localized in $\theta$ such that $$|T|^{1/2}|\varphi_T|\lesssim \phi_T.$$
The tubes in $\H_\theta$ are separated.
Note that by H\"older
$$|a_T:=\frac{1}{|T|^{1/2}}\langle \tilde{f}_\theta,\varphi_T\rangle|\lesssim \|\tilde{f}_\theta\|_\infty\lesssim \|f_\theta\|_\infty\le \|f\|_{\infty,\delta}.$$
It is now clear that we should take
$$f_\lambda=\sum_{\theta}\sum_{T\in\H_\theta:\;|a_T|\sim\lambda}a_T\lambda^{-1}|T|^{1/2}\varphi_T.$$

To see \eqref{EE8} note that the first inequality follows from \eqref{EE2} and the fact that $\|\cdot\|_{l^{p/2}}\le \|\cdot\|_{l^{1}}$. To derive the second inequality, it suffices to prove that for each $\theta$
$$\|\sum_{T\in\H_\theta:\;|a_T|\sim\lambda}\langle \tilde{f}_\theta,\varphi_T\rangle\varphi_T\|_p\lesssim\|f_\theta\|_p.$$
Using \eqref{EE2} and the immediate consequence of H\"older $|a_T|^p\lesssim\int|\tilde{f}_\theta|^p|T|^{-1/2}\varphi_T$ we get
$$\|\sum_{T\in\H_\theta:\;|a_T|\sim\lambda}\langle \tilde{f}_\theta,\varphi_T\rangle\varphi_T\|_p^p\lesssim \lambda^p|T||\{T\in\H_\theta:\;|a_T|\sim\lambda\}|\lesssim$$
$$\lesssim |T|\sum_{T\in\H_\theta}|a_T|^p\lesssim \int|\tilde{f}_\theta|^p\sum_{T\in\H_\theta}\phi_T\lesssim \int|\tilde{f}_\theta|^p\lesssim \int|{f}_\theta|^p.$$
\end{proof}
\bigskip

\section{A weak-type reformulation of the problem}\label{s4}
\bigskip

In order to be able to use tools from incidence theory, it is convenient to rephrase the inequality in Theorem \ref{thmmm1}  in terms of  level set estimates.
\begin{definition}Fix $\frac{2(n+1)}{n-1}< p<\infty$ and $\alpha>0$. We say that $P(p,\alpha)$ holds if for each $\lambda>0$, each $0<\delta<1$ and for each  $f$ with Fourier support in $\A_{\delta}$ and with $\|f\|_{\infty,\delta}\le 1$ we have
\begin{equation}
\label{EE3}
|\{|f|>\lambda\}|\lesssim (\frac{\delta^{-\frac{n-1}{4}+\frac{n+1}{2p}-\frac{\alpha}{p}}}{\lambda})^p\|f\|_2^2,
\end{equation}
where the implicit constant depends only on $n,p,\alpha$.
\end{definition}
Note that due to multiplication invariance, $P(p,\alpha)$ is equivalent with asking that
$$|\{|f|>\lambda\}|\lesssim (\frac{\delta^{-\frac{n-1}{4}+\frac{n+1}{2p}-\frac{\alpha}{p}}}{\lambda})^p\|f\|_2^2\|f\|_{\infty,\delta}^{p-2}$$
for each  $f$ with Fourier support in $\A_{\delta}$. We next prove that $P(p,\alpha)$ undergoes a self-improvement, in that the quantity $\|f\|_2^2\|f\|_{\infty,\delta}^{p-2}$ on the right can be replaced by the smaller $\|f\|_{p,\delta}^p$. The two mechanisms that allow for this improvement are the decomposition using balanced $N$-functions and the equivalence \eqref{EE10} of these quantities in this case.

\begin{proposition}
\label{equivproppalpha}
Assume $P(p,\alpha)$ holds for some $p,\alpha$.
Then we also have
\begin{equation}
\label{EE4}
\|f\|_p\lessapprox\delta^{-\frac{n-1}{4}+\frac{n+1}{2p}-\frac{\alpha}{p}}\|f\|_{p,\delta}
\end{equation}
for each  $f$ with Fourier support in $\A_{\delta}$.
\end{proposition}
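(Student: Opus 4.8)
The plan is to deduce the strong-type bound \eqref{EE4} from the weak-type hypothesis $P(p,\alpha)$ by a standard dyadic pigeonholing on the level sets of $|f|$, combined with the wave packet decomposition of Lemma \ref{WPD} and the self-improvement afforded by balanced $N$-functions. First I would reduce to the normalized case $\|f\|_{\infty,\delta}\le 1$ using the multiplication invariance already noted after the definition of $P(p,\alpha)$; it then suffices to show $\|f\|_p^p\lessapprox \delta^{(-\frac{n-1}{4}+\frac{n+1}{2p}-\frac\alpha p)p}\|f\|_{p,\delta}^p$. Since $\|f\|_{\infty,\delta}\le 1$ we have $|f|\lesssim \log(1/\delta)^{O(1)}$ pointwise up to a negligible tail (the wave packet decomposition in Lemma \ref{WPD} sums over dyadic $\lambda\lesssim\|f\|_{\infty,\delta}\le 1$, and the super-level sets for $\lambda$ a small power of $\delta$ contribute harmlessly because $|\{|f|>\lambda\}|$ is controlled by a fixed polynomial factor against a fixed power of $R$), so I would write
$$\|f\|_p^p \sim \sum_{\mu \text{ dyadic}} \mu^p |\{|f|\sim \mu\}|,$$
restrict to the $O(\log\frac1\delta)$ relevant dyadic values of $\mu$, and bound each term.

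The heart of the argument is to estimate a single level set $|\{|f|\sim\mu\}|$ well. Applying $P(p,\alpha)$ directly gives $|\{|f|>\mu\}|\lesssim (\delta^{-\frac{n-1}{4}+\frac{n+1}{2p}-\frac\alpha p}/\mu)^p\|f\|_2^2\|f\|_{\infty,\delta}^{p-2}$ in the form noted after the definition, and summing $\mu^p$ times this over the logarithmically many scales would only yield the weaker RHS $\|f\|_2^2\|f\|_{\infty,\delta}^{p-2}$. To get the sharper $\|f\|_{p,\delta}^p$ I would instead feed into $P(p,\alpha)$ not $f$ itself but the pieces $\lambda f_\lambda$ produced by Lemma \ref{WPD}: decompose $f=\sum_{\lambda}\lambda f_\lambda$, and for each contributing $\lambda$ split $f_\lambda$ into $O(\log|\H(f_\lambda)|)=O(\log\frac1\delta)$ balanced $N$-functions via Lemma \ref{lemmabalancednfunc}(i). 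For a balanced $N$-function $g$ (with $N=\delta^{-1}$), the key point is the equivalence \eqref{EE10}: $\|g\|_{p,\delta}\sim \|g\|_{2,\delta}^{2/p}\|g\|_{\infty,\delta}^{1-2/p}$, so that applying $P(p,\alpha)$ to $g$ gives
$$|\{|g|>\mu\}| \lesssim \Big(\frac{\delta^{-\frac{n-1}{4}+\frac{n+1}{2p}-\frac\alpha p}}{\mu}\Big)^p \|g\|_2^2\|g\|_{\infty,\delta}^{p-2} \sim \Big(\frac{\delta^{-\frac{n-1}{4}+\frac{n+1}{2p}-\frac\alpha p}}{\mu}\Big)^p \|g\|_{p,\delta}^p,$$
using $\|g\|_2\sim\|g\|_{2,\delta}$ from the Fourier support condition. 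This is exactly the strong-type-type bound for each balanced piece.

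From here I would reassemble: for each dyadic $\mu$, $\{|f|\gtrsim\mu\}\subset\bigcup_{\lambda}\bigcup_{j}\{|g_{\lambda,j}|\gtrsim \mu/(\lambda\cdot(\#\text{pieces}))\}$ where $g_{\lambda,j}$ are the balanced constituents, but it is cleaner to first bound $\|f\|_p\le\sum_\lambda \lambda\|f_\lambda\|_p \le \sum_\lambda \lambda\sum_j\|g_{\lambda,j}\|_p$, then bound each $\|g_{\lambda,j}\|_p$ by its own dyadic sum over level sets and apply the displayed balanced-function estimate, yielding $\|g_{\lambda,j}\|_p\lessapprox \delta^{-\frac{n-1}{4}+\frac{n+1}{2p}-\frac\alpha p}\|g_{\lambda,j}\|_{p,\delta}$. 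Summing the $O(\log\frac1\delta)$ values of $j$ and then the $O(\log\frac1\delta)$ values of $\lambda$ costs only $\log(1/\delta)^{O(1)}$, which is absorbed into $\lessapprox$; at each stage I use $\sum_j\|g_{\lambda,j}\|_{p,\delta}\lessapprox \|f_\lambda\|_{p,\delta}$ (finitely overlapping, triangle inequality in $\ell^2$ over $\theta$) and $\sum_\lambda\lambda\|f_\lambda\|_{p,\delta}\lessapprox\|f\|_{p,\delta}$, the latter from the second inequality in \eqref{EE8} together with the fact that only $O(\log\frac1\delta)$ dyadic $\lambda$ matter. The main obstacle — and the reason the proposition is not entirely trivial — is bookkeeping the passage through the two layers of dyadic/balanced decomposition so that (a) $P(p,\alpha)$ is only ever applied to objects for which $\|\cdot\|_2^2\|\cdot\|_{\infty,\delta}^{p-2}\sim\|\cdot\|_{p,\delta}^p$, i.e. balanced $N$-functions, and (b) the reassembled $\ell^2(\theta)$-type norm of the pieces does not blow up; both are handled by \eqref{EE10}, \eqref{EE2}, \eqref{EE8} and finite overlap, at the cost of acceptable $\log(1/\delta)$ factors.
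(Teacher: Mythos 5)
Your overall strategy --- apply the weak-type hypothesis $P(p,\alpha)$ only to balanced $N$-functions, where \eqref{EE10} converts $\|\cdot\|_2^2\|\cdot\|_{\infty,\delta}^{p-2}$ into $\|\cdot\|_{p,\delta}^p$, and then reassemble via \eqref{EE2}, \eqref{EE8} and the finite overlap of the balanced pieces --- is exactly the paper's. The gap is in the truncation of the wave packet decomposition (equivalently, of the dyadic level sets) to $O(\log\frac1\delta)$ scales. You justify discarding the amplitudes $\lambda\lesssim\delta^{C'}$ by saying the corresponding level sets are ``controlled by a fixed polynomial factor against a fixed power of $R$'', but there is no ball $B_R$ in this proposition: \eqref{EE4} is a global estimate on $\R^n$ for an arbitrary Schwartz $f$, and $\{|f|>\lambda\}$ can have arbitrarily large measure when $\lambda$ is small. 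Concretely, Lemma \ref{WPD} produces infinitely many dyadic $\lambda\lesssim\|f\|_{\infty,\delta}$; for small $\lambda$ the piece $f_\lambda$ may consist of $|\H(f_\lambda)|\lesssim\lambda^{-p}N^{-\frac{n+1}{2}}\|f\|_{p,\delta}^p$ tubes spread over an enormous region, and the only available bound on $\|\lambda f_\lambda\|_{L^p(\R^n)}$ in terms of $\|f\|_{p,\delta}$ (via \eqref{EE8} and Cauchy--Schwarz over the $\sim\delta^{-\frac{n-1}{2}}$ directions) carries no decay in $\lambda$, so the triangle inequality over all scales does not close. The same problem appears if you pigeonhole the level sets of $f$ directly: Tchebyshev bounds the low levels by $\delta^{C(p-2)}\|f\|_{\infty,\delta}^{p-2}\|f\|_2^2$, and for a general $f$ (not an $N$-function) $\|f\|_2^2$ is not dominated by any $\delta^{-C}\|f\|_{p,\delta}^p$.

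The paper's fix is a preliminary spatial localization that your proposal omits: it first reduces \eqref{EE4} to the local estimate \eqref{EE7} on $\frac12\delta^{-1}$-cubes $Q$, using the partition $\sum_Q\psi_Q^2$, the rapid decay of $\psi$, and Minkowski's inequality to reassemble $\|f\|_{p,\delta}$ at the end. On a fixed cube $Q$ the tail is trivial, since $\|\sum_{\lambda\lesssim\delta^{C'}}\lambda f_\lambda\|_{L^p(Q)}\lesssim\delta^{C'}|Q|^{1/p}$ and $|Q|^{1/p}\sim\delta^{-n/p}$ is precisely the ``fixed power of $R$'' your argument appeals to; after that only $\lessapprox 1$ values of $\lambda$ survive and your balanced-function argument goes through (modulo a parabolic rescaling bringing the Fourier support of the localized pieces from $\A_{2\delta}$ back to $\A_\delta$ so that $P(p,\alpha)$ is invoked at the correct scale). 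You should insert this localization step; without it the restriction to logarithmically many scales is unjustified.
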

\begin{proof}
Fix $f$ with Fourier support in $\A_{\delta}$. We first note that the hypothesis implies that
\begin{equation}
\label{EE6}
\|f\|_p\lessapprox\delta^{-\frac{(n-1)}{4}+\frac{n+1}{2p}-\frac{\alpha}p}\|f\|_{2}^{\frac2p}\|f\|_{\infty,\delta}^{1-\frac2p}.
\end{equation}
To see this it suffices to assume $\|f\|_{\infty,\delta}=1$.
Since
\begin{equation}
\label{EE16}
\|f\|_\infty\le \delta^{-\frac{n-1}{4}}\|f\|_{\infty,\delta}
\end{equation}
we can write
$$|f|\approx \sum_{\lambda\text{ dyadic: }\lambda \lesssim \delta^{-\frac{n-1}{4}}}\lambda1_{|f|\approx \lambda}.$$
Note that by the triangle inequality and Tchebyshev's inequality
$$\|\sum_{\lambda\text{ dyadic: }\lambda \lesssim \delta^{C}}\lambda1_{|f|\approx \lambda}\|_p\le \sum_{\lambda\text{ dyadic: }\lambda \lesssim \delta^{C}}\lambda^{1-\frac2p}\|f\|_{2}^{\frac2p}\le \delta^{-\frac{(n-1)}{4}+\frac{n+1}{2p}-\frac{\alpha}p}\|f\|_{2}^{\frac2p}\|f\|_{\infty,\delta}^{1-\frac2p},$$
if $C$ is chosen large enough.

On the other hand, $P(p,\alpha)$ and the triangle inequality imply that
$$\|\sum_{\lambda \text{ dyadic}\atop{\delta^{C}\lesssim \lambda \lesssim \delta^{-\frac{n-1}{4}}}}\lambda1_{|f|\approx \lambda}\|_p\le \sum_{\lambda \text{ dyadic}\atop{\delta^{C}\lesssim \lambda \lesssim \delta^{-\frac{n-1}{4}}}}\delta^{-\frac{(n-1)}{4}+\frac{n+1}{2p}-\frac{\alpha}p}\|f\|_{2}^{\frac2p}\lessapprox$$
$$\lessapprox \delta^{-\frac{(n-1)}{4}+\frac{n+1}{2p}-\frac{\alpha}p}\|f\|_{2}^{\frac2p},$$
and \eqref{EE6} follows.

We next argue that to prove \eqref{EE4} it suffices to prove the localized version
\begin{equation}
\label{EE7}
\|f\|_{L^p(Q)}\lessapprox\delta^{-\frac{n-1}{4}+\frac{n+1}{2p}-\frac{\alpha}{p}}\|f\|_{p,2\delta}
\end{equation}
for each $\frac12\delta^{-1}$-cube $Q$ and each $f$ Fourier supported in $\A_{2\delta}$. Indeed, then by invoking the rapid decay of $\psi$ we get for each  $f$ Fourier supported in $\A_{\delta}$
$$\|f\|_{L^p(\R^n)}^p=\sum_{Q'}\|\sum_Q\psi_Q^2f\|_{L^p(Q')}^p\lesssim \sum_{Q,Q'}\|\psi_Qf\|_{L^p(Q')}^p(1+\frac{\dist(Q,Q')}{\delta^{-1}})^{-C}.$$
Apply now \eqref{EE7} to $\psi_Qf$ to further bound this by
$$\lessapprox\delta^{-\frac{p(n-1)}{4}+\frac{n+1}{2}-{\alpha}}\sum_Q\|\psi_Qf\|_{p,2\delta}^p.$$
It is easy to see that
$$\|\psi_Qf\|_{p,2\delta}\lesssim (\sum_{\theta\in \P_\delta}\|\psi_Qf_\theta\|_p^2)^{1/2},$$
Using Minkowski's inequality we conclude that
$$\|f\|_{L^p(\R^n)}^p\lessapprox \delta^{-\frac{p(n-1)}{4}+\frac{n+1}{2}-{\alpha}}(\sum_\theta\|\sum_Q\psi_Qf_\theta\|_p^2)^{p/2}\lesssim \delta^{-\frac{p(n-1)}{4}+\frac{n+1}{2}-{\alpha}}\|f\|_{p,\delta}^p.$$

Finally, we prove \eqref{EE7}.  Assume $\|f\|_{p,2\delta}=1$. Using this and Bernstein's inequality for each $\theta\in\P_{2\delta}$ we get that $\|f\|_{\infty,2\delta}\lesssim \delta^{-C}$, for some large enough $C$ whose value is not important.

Write now using Lemma \ref{WPD}
$$f=\sum_{\lambda\lesssim \delta^{-C}}\lambda f_\lambda.$$
As $\|f_\lambda\|_\infty\lesssim 1$, for $C'$ large enough we have
$$\|\sum_{\lambda\lesssim \delta^{C'}}\lambda f_\lambda\|_{L^p(Q)}\lesssim \delta^{-\frac{n-1}{4}+\frac{n+1}{2p}-\frac{\alpha}{p}},$$
so \eqref{EE7} is satisfied for this part of $f$. Since there are $\lessapprox 1$ dyadic $\lambda$ in $[\delta^{C'},\delta^{-C}]$, it remains to prove that for each such $\lambda$
$$\|\lambda f_\lambda\|_{L^p(\R^n)}\lessapprox\delta^{-\frac{n-1}{4}+\frac{n+1}{2p}-\frac{\alpha}{p}}.$$
Use \eqref{EE8} and Lemma \ref{lemmabalancednfunc} $(i)$ to argue that each such $f_\lambda$ is the sum of $\lessapprox 1$ balanced $N$-functions $f_{\lambda,i}$, where $N=\frac1{2\delta}$. It thus suffices to prove that for each $i$
\begin{equation}
\label{dchgchdyrerefuyregyifudtri7t}
\|\lambda f_{\lambda,i}\|_{L^p(\R^n)}\lesssim\delta^{-\frac{n-1}{4}+\frac{n+1}{2p}-\frac{\alpha}{p}}.
\end{equation}
Note that by parabolic rescaling like in Section \ref{s5}, this time with a factor of $1/2$, we can assume that $\supp(\widehat{f_{\lambda,i}})\subset \A_{\delta}$.
Then \eqref{dchgchdyrerefuyregyifudtri7t} follows by combining \eqref{EE6}, \eqref{EE10}, \eqref{EE2} and \eqref{EE8}.
\end{proof}
\bigskip

In the remaining part of the paper we will verify that $P(p,\alpha)$ holds for each $p>\frac{2(n+2)}{n-1}$ and each $\alpha>0$. Proposition \ref{equivproppalpha} will then immediately imply Theorem \ref{thmmm1}. The main tool  is induction on scales.
\begin{proposition}
\label{mainmech}
Fix $p>\frac{2(n+2)}{n-1}$ and suppose that $P(p,\alpha)$ holds for some $\alpha$. Then $P(p,\gamma\alpha)$ also holds, where $0<\gamma<1$ is independent of $\alpha$.
\end{proposition}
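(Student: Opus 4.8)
The plan is to run an induction-on-scales (self-improvement) argument in the spirit of Wolff's bilinear-to-linear machinery from \cite{LW}. Suppose $P(p,\alpha)$ holds. Fix $\delta$, a function $f$ with Fourier support in $\A_\delta$ and $\|f\|_{\infty,\delta}\le 1$, and a level $\lambda$. By Proposition \ref{equivproppalpha} we already know that $P(p,\alpha)$ upgrades to the strong $L^p$ inequality \eqref{EE4}, which we will freely use at scale $\delta$ and at all intermediate scales $\sigma^{-1}$ with $\delta\le\sigma\le 1$ (after parabolic rescaling of a cap of diameter $\sim\sigma^{1/2}$ to the full paraboloid, as in Section \ref{s5}). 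The first step is a standard pigeonholing: using the wave packet decomposition (Lemma \ref{WPD}) and Lemma \ref{lemmabalancednfunc}(i), it suffices to prove the level-set bound for a single balanced $N$-function $f$ with $N=\delta^{-1}$, whose $L^{p,\delta}$ and $L^\infty,L^2$ data are all comparable via \eqref{EE24}, \eqref{EE10}. For such an $f$ the level set $\{|f|>\lambda\}$ is, up to tails, a union of $N$-tubes, and \eqref{EE61}-style counting of tubes can be organized by their plate $\theta$.

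The core of the argument is a \emph{broad/narrow} (or bilinear) decomposition of the cube $Q$ of side $\sim\delta^{-1}$ on which we work. At each point $x$, either the mass of $\sum_\theta f_\theta(x)$ concentrates on a single slab of intermediate dimensions — the \emph{narrow} case, where one passes to a coarser scale $\sigma$, applies the inductive hypothesis $P(p,\alpha)$ after parabolic rescaling, and then sums the resulting estimates over the $\sim\sigma^{-1/2}$ sub-slabs (this is where the gain of a power $\sigma^{\text{something}\cdot\alpha}$ turns $\alpha$ into $\gamma\alpha$) — or else the mass is spread over $\gtrsim 2$ transversal caps, the \emph{broad} case, where one invokes Bourgain's decoupling Theorem \ref{thm:Bor} at the critical exponent $p_0=\frac{2n}{n-1}$. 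Since the target exponent is $p>\frac{2(n+2)}{n-1}>p_0$, one interpolates the $L^{p_0}$ decoupling bound \eqref{Snew29} against the trivial $L^\infty$ bound \eqref{EE16} to get, in the broad case, an $L^p$ bound with a \emph{strictly better} power of $\delta$ than the conjectured exponent — concretely, the exponent $\frac{2(n+2)}{n-1}$ is exactly the break-even point where the interpolated broad estimate matches the conjectured one, so for $p$ strictly larger the broad piece is acceptable with room to spare. Balancing the narrow losses against the broad gains and choosing the intermediate scale $\sigma$ as a small fixed power of $\delta$ forces the new exponent to be $\gamma\alpha$ with $\gamma<1$ depending only on $n,p$ (through the gap $p-\frac{2(n+2)}{n-1}$), not on $\alpha$.

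Concretely the steps are: (1) reduce to a balanced $N$-function on a single $\delta^{-1}$-cube via Lemma \ref{WPD} and Proposition \ref{equivproppalpha}; (2) partition $Q$ into the narrow set $Q_{\text{narr}}$ (mass dominated by one $\sigma^{1/2}$-slab) and the broad set; (3) on $Q_{\text{narr}}$, for each dominant slab apply parabolic rescaling mapping that slab's cap to $P^{n-1}$, apply $P(p,\alpha)$ at scale $\delta/\sigma$, rescale back, and sum the $L^p$ contributions of the $\lessapprox\sigma^{-(n-1)/2}$ slabs using $\|f\|_{p,\delta}^2=\sum_\theta\|f_\theta\|_p^2$; (4) on the broad set apply Theorem \ref{thm:Bor} interpolated with \eqref{EE16} to bound $\|f\|_{L^p(Q_{\text{broad}})}$ by $\delta^{-\frac{n-1}{4}+\frac{n+1}{2p}+c(p)}\|f\|_{p,\delta}$ with $c(p)>0$ when $p>\frac{2(n+2)}{n-1}$; (5) combine, convert back to a level-set statement via Tchebyshev, optimize $\sigma=\delta^{\kappa}$, and read off $\gamma$. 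The main obstacle — and the part requiring genuine care rather than bookkeeping — is Step (3): making the narrow-case rescaling rigorous requires tracking how the balanced $N$-function structure, the separation of plates, and the weights $w_{B_R}$ transform under parabolic rescaling, and ensuring that the sum over the many sub-slabs does not itself reintroduce a loss larger than the broad-case gain; this is exactly the delicate $l^2$-orthogonality bookkeeping that makes decoupling-flavored inductions subtle, and it is where following \cite{LW} closely will be essential.
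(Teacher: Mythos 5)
Your proposal replaces the paper's mechanism with a Bourgain--Guth style broad/narrow decomposition, and the broad case as you describe it does not close. Theorem \ref{thm:Bor} is a \emph{linear} decoupling inequality; interpolating it against the trivial bound $\|f\|_\infty\le\delta^{-\frac{n-1}{4}}\|f\|_{\infty,\delta}$ makes no use of broadness or transversality and yields, for every $p$, only $\|f\|_p^p\lesssim_\epsilon \delta^{-\epsilon}\,\delta^{-\frac{(n-1)p}{4}+\frac n2}\|f\|_2^2\|f\|_{\infty,\delta}^{p-2}$, which misses the conjectured exponent $-\frac{(n-1)p}{4}+\frac{n+1}{2}$ by a factor $\delta^{-1/2}$ \emph{uniformly in $p$}. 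There is no break-even at $p=\frac{2(n+2)}{n-1}$: what this input actually buys is the level-set bound \eqref{EE3} only for $\lambda\lesssim N^{\frac{n-1}{4}-\frac{n-1}{2(pn-p-2n)}}$, i.e.\ for levels bounded away from $\|f\|_\infty$, and never for the top dyadic range of $\lambda$ when $\alpha$ is small. A genuine broad/narrow argument would require a multilinear (transversal) estimate in the broad case, which is a different input from Theorem \ref{thm:Bor}; your step (4) as written is therefore a gap, and it is fatal because the $\delta^{-1/2}$ loss is independent of $\alpha$ and destroys the self-improvement $\alpha\to\gamma\alpha$.

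Relatedly, you have omitted the actual engine of the paper's proof: Wolff's incidence-theoretic localization. The paper decomposes $f$ over $N^{1/2}$-cubes into $N^{1/2}$-functions $f_{\Delta,h}$ and, for $\lambda\gtrsim N^{\frac{n-1}{4}-\frac{n-1}{2(pn-p-2n)}}$, verifies via \eqref{EE8} and \eqref{EE13} that the tube count $|\H(f_{\Delta,h})|$ is small enough that the two-points-determine-a-line counting (Proposition \ref{incwolffplandp}, Lemma \ref{hfuiy43578tyufrugf}) confines the level set to sub-level sets of subfunctions living on single $N^{1-\epsilon_p}$-cubes. Applying $P(p,\alpha)$ on those cubes, i.e.\ at the coarser frequency scale $\delta^{1-\epsilon_p}$, is what converts $\delta^{-\alpha/p}$ into $\delta^{-\alpha(1-\epsilon_p)/p}$ (Proposition \ref{dbncvyer78f589ty}), and Proposition \ref{dfyyer7guiouy8ugiogbhjgfvlkfgnbjkl} reassembles the cubes with the correct parabolic-rescaling gain. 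The threshold $p>\frac{2(n+2)}{n-1}$ arises precisely from matching the range of $\lambda$ handled by the Tchebyshev/$L^{\frac{2n}{n-1}}$ step against the range where the localization hypothesis $|\H(f_{\Delta,h})|\lesssim\lambda'^2$ holds --- not from an interpolation identity. To salvage your route you would have to supply a multilinear broad-case estimate and redo the numerology; as it stands the proposal does not prove the proposition.
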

We postpone the proof of the proposition to the next sections. For now, let us see how this proposition implies that $P(p,\alpha)$ holds for each $p>\frac{2(n+2)}{n-1}$ and each $\alpha>0$. It suffices to check that $P(p,\alpha)$ holds for large enough $\alpha$.

Fix $f$ with Fourier support in $\A_{\delta}$ and with $\|f\|_{\infty,\delta}\le 1$. Note first that $\{|f|>\lambda\}=\emptyset$ if $\lambda>\delta^{-\frac{n-1}{4}}$, due to \eqref{EE16}. On the other hand,
$$|\{|f|>\lambda\}|\le \lambda^{-2}\|f\|_2^2.$$
It now suffices to note that for $\alpha$ large enough (depending on $p,n$) we have
$$\lambda^{-2}\le (\frac{\delta^{-\frac{n-1}{4}+\frac{n+1}{2p}-\frac{\alpha}{p}}}{\lambda})^p$$
for each
$\lambda\le\delta^{-\frac{n-1}{4}}$.
\bigskip

\section{Parabolic rescaling}\label{s5}
\bigskip

Fix $\frac12\ge \sigma\ge \delta=\frac1N$. Let $Q$ be a $\sigma N$-cube and assume $\supp(\hat{f})\subset \A_\delta$. Define the operator $T_Qf=\psi_Qf$. Then recall that  $\supp(\widehat{T_Qf})\subset\A_{\frac\delta\sigma}$. The next lemma essentially shows the relation between $\|T_Qf\|_{p,\frac\delta\sigma}$ and $\|f\|_{p,\delta}$.
\begin{lemma}
We have for each $2\le p< \infty$ and $\sigma\gtrsim \delta^{1/2}$
\begin{equation}
\label{EE25}
\|T_Qf\|_{p,\frac\delta\sigma}\lessapprox \sigma^{\frac{n+1}{2p}-\frac{n-1}{4}}\|f\|_{p,\delta}.
\end{equation}
Moreover, if $\sigma=\delta^{1/2}$
\begin{equation}
\label{EE13}
\|T_Qf\|_{2}\lesssim N^{n/4}\|f\|_{\infty,\delta}.
\end{equation}
\end{lemma}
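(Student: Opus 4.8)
The plan is to treat the two estimates separately, both via the standard parabolic rescaling change of variables, which I will set up once. Fix $Q$ a $\sigma N$-cube, and write $Q = c_Q + [-\sigma N/2, \sigma N/2]^n$. Since $\supp(\hat f)\subset \A_\delta$ and $Q$ is a $\sigma\delta^{-1}$-cube, the remark in Section \ref{s3} gives $\supp(\widehat{T_Qf}) = \supp(\widehat{\psi_Q f})\subset \A_{\delta/\sigma}$. The affine map $L$ that sends $\A_{\delta/\sigma}$ (with its caps $\theta\in\P_{\delta/\sigma}$) to $\A_{(\delta/\sigma)\cdot\sigma^{-1}} = \A_{\delta/\sigma^2}$... — more precisely, I would use the anisotropic dilation $(\xi_1,\dots,\xi_{n-1},\xi_n)\mapsto(\sigma^{-1/2}\xi_1,\dots,\sigma^{-1/2}\xi_{n-1},\sigma^{-1}\xi_n)$ composed with a translation, which maps a cap $\theta\in\P_{\delta/\sigma}$ of dimensions $(\delta/\sigma)^{1/2}\times\cdots\times(\delta/\sigma)$ to a cap of dimensions $\delta^{1/2}\times\cdots\times\delta$, i.e. an element of $\P_\delta$, and sets up a bijection $\theta\leftrightarrow\theta'$ between $\P_{\delta/\sigma}$ and (a sub-collection of) $\P_\delta$. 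On the spatial side this is the transpose inverse dilation, which has Jacobian $\sigma^{(n+1)/2}$.

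For \eqref{EE25}: apply the change of variables in each term $\|(T_Qf)_\theta\|_p$, $\theta\in\P_{\delta/\sigma}$. Under the dilation, $\|(T_Qf)_\theta\|_p^p$ picks up the Jacobian factor $\sigma^{(n+1)/2}$, so $\|(T_Qf)_\theta\|_p = \sigma^{(n+1)/(2p)}\|g_{\theta'}\|_p$ where $g$ is the rescaled function. Since the original caps $\theta'$ live on $P^{n-1}$ rescaled by $\sigma^{-1/2}$ — i.e., on a paraboloid of "radius" $\sigma^{-1/2}$ rather than radius $1$ — one more rescaling reduces to unit scale, and matching the Fourier supports forces the loss $\sigma^{-(n-1)/4}$ (this is exactly the exponent in the discrete restriction / decoupling normalization, and is where the condition $\sigma\gtrsim\delta^{1/2}$ is used to guarantee $\delta/\sigma^2\le 1$). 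Squaring and summing over $\theta$, using that the bijection $\theta\mapsto\theta'$ is injective, gives $\|T_Qf\|_{p,\delta/\sigma}\lessapprox \sigma^{(n+1)/(2p)-(n-1)/4}\|g\|_{p,\delta}$, and then $\|g\|_{p,\delta}\lesssim\|f\|_{p,\delta}$ because $\psi_Q$ has $\|\widehat{\psi_Q}\|_1\lesssim 1$ after rescaling (convolution with an $L^1$-normalized kernel does not increase the $\|\cdot\|_{p,\delta}$ norm, up to the $\log$ losses absorbed in $\lessapprox$).

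For \eqref{EE13} with $\sigma=\delta^{1/2}$: here $Q$ is a $\delta^{-1/2}$-cube and $\supp(\widehat{T_Qf})\subset\A_{\delta^{1/2}}$. I would use Plancherel: $\|T_Qf\|_2^2 = \|\widehat{\psi_Q f}\|_2^2$. Since $\widehat{\psi_Q f} = \widehat{\psi_Q}*\hat f$ is supported in $\A_{\delta^{1/2}}$, whose Lebesgue measure is $\sim\delta^{1/2}=N^{-1/2}$... wait — that is too small; the correct bound is the $L^\infty$ estimate $\|\widehat{\psi_Q f}\|_\infty\lesssim \|\hat f * \widehat{\psi_Q}\|_\infty$. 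The cleaner route: $\|T_Qf\|_{L^2(\R^n)}\le \|T_Qf\|_{L^\infty}^{1/2}\cdot(\text{measure of effective support})^{1/2}$; the function $T_Qf = \psi_Q f$ is morally concentrated on $Q$ (a set of measure $\sim(\delta^{-1/2})^n = N^{n/2}$) by the rapid decay of $\psi$, and $\|T_Qf\|_\infty\le\|f\|_\infty\lesssim \delta^{-(n-1)/4}\|f\|_{\infty,\delta} = N^{(n-1)/4}\|f\|_{\infty,\delta}$ by \eqref{EE16}. Multiplying, $\|T_Qf\|_2\lesssim N^{n/4}\cdot N^{(n-1)/8}$... the exponents don't immediately match $N^{n/4}$, so the right accounting must instead control $\|f\|_\infty$ on $Q$ more efficiently, or use that $\|\psi_Q f\|_2^2\lesssim \int|f|^2\psi_Q \lesssim \|f\|_{\infty,\delta}^2\cdot|Q|$ combined with the correct $L^2\to L^\infty$-type bound $\|f\|_{L^2(Q)}^2\lesssim N^{n/2}\|f\|_{\infty,\delta}^2$; taking square roots gives $\|T_Qf\|_2\lesssim N^{n/4}\|f\|_{\infty,\delta}$ directly. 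I would carry this out by Cauchy–Schwarz on the wave-packet decomposition of $f$ restricted to $Q$ rather than by a crude pointwise bound.

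The main obstacle is bookkeeping the rescaling for \eqref{EE25} cleanly — in particular, tracking how the caps $\theta\in\P_{\delta/\sigma}$ correspond under the anisotropic dilation to caps in $\P_\delta$ on the unit paraboloid, and verifying that convolution with the (rescaled) kernel of $\psi_Q$ is harmless at the level of the $\|\cdot\|_{p,\delta}$ norm — this is routine but is where all the $\log$ losses and the constraint $\sigma\gtrsim\delta^{1/2}$ enter. The estimate \eqref{EE13} is comparatively soft: once one commits to the $L^\infty\times(\text{support})^{1/2}$ heuristic made rigorous via the rapid decay of $\psi$ and \eqref{EE16}, there is no real difficulty beyond matching the power of $N$.
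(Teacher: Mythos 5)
Your plan for \eqref{EE25} misses the actual mechanism of the estimate and the rescaling you propose does not do what you claim. First, the arithmetic: the dilation $(\xi',\xi_n)\mapsto(\sigma^{-1/2}\xi',\sigma^{-1}\xi_n)$ sends a cap of dimensions $(\delta/\sigma)^{1/2}\times\cdots\times(\delta/\sigma)$ to one of dimensions $\delta^{1/2}\sigma^{-1}\times\cdots\times\delta\sigma^{-2}$, i.e.\ an element of $\P_{\delta/\sigma^2}$ (as you first write), not of $\P_\delta$; the subsequent claim of a bijection with $\P_\delta$ contradicts this and the Jacobian bookkeeping that follows is therefore not anchored to anything. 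More fundamentally, a change of variables is norm-neutral and cannot produce the gain encoded in the exponent $\sigma^{\frac{n+1}{2p}-\frac{n-1}{4}}$: the left side of \eqref{EE25} measures $\psi_Qf$ on the \emph{coarse} caps $\P_{\delta/\sigma}$ while the right side measures $f$ on the \emph{fine} caps $\P_\delta$, and the inequality quantifies what spatial localization to a $\sigma N$-cube buys when you coarsen the decomposition. The paper's proof isolates this in two endpoint facts after reducing (via the wave packet decomposition and balanced $N$-functions, as in Proposition \ref{equivproppalpha}) to the bound \eqref{EE12} with $\|f\|_2^{2/p}\|f\|_{\infty,\delta}^{1-2/p}$ on the right: at $p=2$ one gains $\sigma^{1/2}$ because $\widehat{\psi_Qf_\theta}=\widehat{\psi_Q}*\widehat{f_\theta}$ smears the normal thickness of $\hat f_\theta$ from $\delta$ out to $\delta/\sigma$ (an $L^1$-normalized convolution over a $\sigma^{-1}$ times larger set lowers the $L^2$ norm by $\sigma^{1/2}$; this is where $\sigma\gtrsim\delta^{1/2}$ is needed, so that no smearing occurs tangentially), and at $p=\infty$ one loses exactly $\sigma^{-(n-1)/4}$ because each coarse cap contains $\sigma^{-(n-1)/2}$ fine caps. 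Neither ingredient, nor the interpolation that combines them, appears in your outline.

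For \eqref{EE13} you correctly discard the crude bound $\|T_Qf\|_2\le\|f\|_\infty|Q|^{1/2}$, which overshoots by $N^{(n-1)/4}$, but your replacement does not close either: ``Cauchy--Schwarz on the wave-packet decomposition'' gives $|f|^2\le(\#\theta)\sum_\theta|f_\theta|^2$ and merely reproduces the same loss of $N^{(n-1)/2}$ coming from the number of caps. What is needed is \emph{orthogonality}, not Cauchy--Schwarz: when $\sigma=\delta^{1/2}$ the kernel $\widehat{\psi_Q}$ has width $(\sigma N)^{-1}=\delta^{1/2}$, comparable to the cap separation, so the functions $\psi_Qf_\theta$ have finitely overlapping Fourier supports and hence $\|T_Qf\|_2^2\lesssim\sum_\theta\|\psi_Qf_\theta\|_2^2\le\sum_\theta\|f_\theta\|_\infty^2\|\psi_Q\|_2^2\lesssim N^{n/2}\|f\|_{\infty,\delta}^2$. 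This is the one-line argument the paper gives, and it is the step your proposal would need to supply to be correct.
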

\begin{proof}An argument based on the wave packet decomposition similar to the one from the last part of the proof of Proposition \ref{equivproppalpha} shows that \eqref{EE25} will follow if we prove
\begin{equation}
\label{EE12}
\|T_Qf\|_{p,\frac\delta\sigma}\lesssim \sigma^{\frac{n+1}{2p}-\frac{n-1}{4}}\|f\|_2^{\frac2p}\|f\|_{\infty,\delta}^{1-\frac2p}.
\end{equation}
It of course suffices to prove  \eqref{EE12} for $p=2$ and $p=\infty$. When $p=2$, this amounts to proving that
$$\|T_Qf\|_2\lesssim \sigma^{1/2}\|f\|_2.$$
Due to orthogonality, it will suffice to prove this if $\hat{f}$ is supported in a $\approx \delta^{1/2}\times\ldots\times \delta^{1/2}\times \delta$ rectangular box
containing a given $\theta\in \P_\delta$. Due to the  essential rotation invariance of $\psi_Q$, we may as well assume this box is $[0,C\delta^{1/2}]^{n-1}\times [0,C\delta]$. To evaluate $\|\widehat{T_Qf}\|_2$, note that convolution with the $L^1$ normalized kernel $\widehat{\psi_Q}$ does not affect the $L^2$ norm on the first $n-1$ coordinates, as the scale $(\sigma N)^{-1}$ of the convolution kernel is smaller than the scale $\delta^{1/2}$ of the support of $\hat{f}$. However, on the $n^{th}$ coordinate, the scale of the convolution kernel is $\sigma^{-1}$ times larger than that of $\hat{f}$, which accounts for the final $\sigma^{1/2}$ gain on the $L^2$ norm of $T_Qf$.

The bound for $p=\infty$ will follow since each sector $\alpha\in \P_{\frac\delta\sigma}$ intersects $\lesssim \sigma^{\frac{n-1}{2}}$ sectors $\theta\in \P_\delta.$

Finally, \eqref{EE13} will follow by noting that $T_Qf_\theta$ and $T_Qf_{\theta'}$ are orthogonal if $\theta,\theta'$ are not sufficiently close neighbors.
\end{proof}
\bigskip

We get in particular that
$$\|T_\Delta f\|_{p,\delta^{1/2}}^p\lessapprox\delta^{-\frac{p(n-1)}{8}+\frac{n+1}{4}}\|f\|_{p,\delta}^p$$
for each $N^{1/2}$-cube $\Delta$. We will need a version of this inequality where we sum over all $N^{1/2}$-cubes $\Delta$. To achieve this we will exploit the spatial almost orthogonality of the functions $T_\Delta f$ and parabolic rescaling.
\begin{proposition}
\label{dfyyer7guiouy8ugiogbhjgfvlkfgnbjkl}
Let $\supp(\hat{f})\subset\A_\delta$ and assume $P(p,\alpha)$ holds for some $p,\alpha$. Then
$$\sum_{\Delta\;\;N^{1/2}\text{-cube}}\|T_\Delta f\|_{p,\delta^{1/2}}^p\lessapprox\delta^{-\frac{p(n-1)}{8}+\frac{n+1}{4}-\frac{\alpha}{2}}\|f\|_{p,\delta}^p.$$
\end{proposition}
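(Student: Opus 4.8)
The plan is to reduce Proposition~\ref{dfyyer7guiouy8ugiogbhjgfvlkfgnbjkl} to two ingredients: the single-cube bound $\|T_\Delta f\|_{p,\delta^{1/2}}^p\lessapprox\delta^{-\frac{p(n-1)}{8}+\frac{n+1}{4}}\|f\|_{p,\delta}^p$ derived above (an immediate consequence of \eqref{EE25} with $\sigma=\delta^{1/2}$), and the hypothesis $P(p,\alpha)$, which after parabolic rescaling will be used to reclaim the extra factor $\delta^{-\alpha/2}$ coming from summing over the $\sim N^{(n-1)/2}$ cubes $\Delta$ of side $N^{1/2}$. Note that if we simply added the single-cube bounds we would pick up a factor of the \emph{number} of cubes, which is far too lossy; the point is that $T_\Delta f$ has Fourier support in $\A_{\delta^{1/2}}$, so after rescaling the $N^{1/2}$-cube $\Delta$ to a unit-sized object we are in the situation of Theorem~\ref{thmmm1} at scale $\delta^{1/2}$, where $P(p,\alpha)$ is exactly the available estimate.

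First I would fix $\Delta$, an $N^{1/2}$-cube, and apply the parabolic rescaling of Section~\ref{s5}: writing $\delta'=\delta^{1/2}=\sigma$, the function $T_\Delta f=\psi_\Delta f$ has Fourier support in $\A_{\delta/\sigma}=\A_{\delta^{1/2}}$. Since $P(p,\alpha)$ holds, by the self-improvement Proposition~\ref{equivproppalpha} we have $\|g\|_p\lessapprox (\delta^{1/2})^{-\frac{n-1}{4}+\frac{n+1}{2p}-\frac{\alpha}{p}}\|g\|_{p,\delta^{1/2}}$ for any $g$ Fourier supported in $\A_{\delta^{1/2}}$. Applying this with $g=T_\Delta f$ and raising to the $p$-th power gives
\[
\|T_\Delta f\|_p^p\lessapprox \delta^{-\frac{p(n-1)}{8}+\frac{n+1}{4}-\frac{\alpha}{2}}\,\|T_\Delta f\|_{p,\delta^{1/2}}^p.
\]
This is the right exponent already; the remaining task is to sum in $\Delta$ and produce $\|f\|_{p,\delta}^p$ on the right.

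Next I would sum over all $N^{1/2}$-cubes $\Delta$. The left side $\sum_\Delta \|T_\Delta f\|_p^p = \sum_\Delta\|\psi_\Delta f\|_p^p$ controls $\|f\|_p^p$ up to $\lessapprox 1$ factors by the rapidly-decaying partition-of-unity argument already used in Proposition~\ref{equivproppalpha} (the $\psi_\Delta^2$ being a partition of unity, with tails summable because of the $(1+\dist/\cdot)^{-C}$ decay of $\psi$), but in fact I don't need that direction — I need to bound $\sum_\Delta\|T_\Delta f\|_{p,\delta^{1/2}}^p$. Here I invoke \eqref{EE25} at $\sigma=\delta^{1/2}$ in the form $\|T_\Delta f\|_{p,\delta^{1/2}}^p\lessapprox \delta^{-\frac{p(n-1)}{8}+\frac{n+1}{4}}\|\psi_\Delta f\|_{p,\delta}^p$, valid for each $\Delta$ (the proof of \eqref{EE25} is local and $\psi_\Delta$ is essentially supported on $\Delta$), and then sum: $\sum_\Delta\|\psi_\Delta f\|_{p,\delta}^p \lesssim \|f\|_{p,\delta}^p$, which follows from Minkowski's inequality in $\ell^2_\theta$ combined with $\sum_\Delta\|\psi_\Delta f_\theta\|_p^p\lesssim \|f_\theta\|_p^p$ (again the partition of unity plus rapid decay, exactly as in the last display of the proof of Proposition~\ref{equivproppalpha}). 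Wait — this route would reintroduce the lossy single-cube factor $\delta^{-\frac{p(n-1)}{8}+\frac{n+1}{4}}$ \emph{twice}. So instead, the correct bookkeeping is: apply the rescaled estimate $\|T_\Delta f\|_p^p\lessapprox \delta^{-\frac{p(n-1)}{8}+\frac{n+1}{4}-\frac{\alpha}{2}}\|T_\Delta f\|_{p,\delta^{1/2}}^p$, then observe that summing the \emph{left} sides, $\sum_\Delta\|T_\Delta f\|_p^p\gtrsim \|f\|_p^p$ is the wrong inequality; what I want is to bound $\sum_\Delta\|T_\Delta f\|_{p,\delta^{1/2}}^p$ directly. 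So the proposition as stated is really obtained by applying \eqref{EE25} with $\sigma=\delta^{1/2}$ once to each term and then summing $\sum_\Delta\|\psi_\Delta f\|_{p,\delta}^p\lesssim\|f\|_{p,\delta}^p$ — but that gives exponent $-\frac{p(n-1)}{8}+\frac{n+1}{4}$ with \emph{no} $\alpha$-gain, which is weaker than claimed.

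The resolution, and the step I expect to be the main obstacle, is that the $\alpha$-gain must be extracted \emph{before} summing in $\Delta$, using the spatial almost-orthogonality of the $T_\Delta f$: the functions $\psi_\Delta f$ live on essentially disjoint cubes $\Delta$, so $\sum_\Delta\|\psi_\Delta f\|_{p,\delta^{1/2}}^p$ should be compared not to a sum of single-cube losses but to a \emph{global} estimate for $f$ at scale $\delta^{1/2}$. Concretely, I would run the argument as: for each $\Delta$, parabolically rescale $\Delta$ to a unit cube — this turns $\A_\delta\cap(\text{dual of }\Delta)$ into $\A_{\delta^{1/2}}$ and turns $f$ into a function $g_\Delta$ Fourier supported in $\A_{\delta^{1/2}}$ with $\|g_\Delta\|_{p,\delta^{1/2}}$ comparable to the corresponding local piece of $\|f\|_{p,\delta}$ — apply $P(p,\alpha)$ (via Proposition~\ref{equivproppalpha}) to $g_\Delta$ to gain the factor $(\delta^{1/2})^{-\alpha/p}$, undo the rescaling, and then sum in $\Delta$ using that the rescaled pieces are almost orthogonal so that $\sum_\Delta\|g_\Delta\|_{p,\delta^{1/2}}^p$ telescopes back to $\|f\|_{p,\delta}^p$ up to $\lessapprox 1$. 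The delicate points are (a) checking that parabolic rescaling of a single $N^{1/2}$-cube genuinely lands in $\A_{\delta^{1/2}}$ and respects the norm $\|\cdot\|_{p,\delta}\leftrightarrow\|\cdot\|_{p,\delta^{1/2}}$ with only the claimed power loss — this is precisely where \eqref{EE25} and the balanced-$N$-function machinery of Sections~\ref{s3}--\ref{s4} enter; and (b) the summation in $\Delta$, where the rapid decay of $\psi$ supplies the needed tail control and Minkowski's inequality handles the $\ell^2_\theta$-structure, exactly mirroring the two partition-of-unity computations already carried out in the proof of Proposition~\ref{equivproppalpha}.
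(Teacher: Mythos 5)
Your proposal correctly diagnoses why the two naive routes fail (summing the single-cube bound \eqref{EE25} over the $\sim N^{(n-1)/2}$ cubes loses the cube count, and it gives no $\alpha$-gain even term by term), but the resolution you then offer does not close the gap, for two concrete reasons. First, ``parabolically rescale $\Delta$ to a unit cube'' conflates spatial and frequency rescaling: the parabolic rescaling relevant here acts on the frequency caps $\tau\in\P_{\delta^{1/2}}$ (the map $L_\tau$ built from the center of the $\delta^{1/4}$-cube $C_\tau$), not on the spatial cubes $\Delta$; a spatial $N^{1/2}$-cube has no ``dual'' frequency cap to rescale, and $T_\Delta f$ has Fourier support in all of $\A_{\delta^{1/2}}$ simply because multiplication by $\psi_\Delta$ thickens $\A_\delta$. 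Second, and more seriously, applying $P(p,\alpha)$ via Proposition \ref{equivproppalpha} to your $g_\Delta$ produces an estimate of the form $\|g_\Delta\|_p\lessapprox\cdots\|g_\Delta\|_{p,\delta^{1/2}}$, i.e.\ it puts the scale-$\delta^{1/2}$ decoupled norm on the \emph{right}; but that is exactly the quantity you are trying to bound from above, so the inequality runs in the wrong direction and the argument is circular. You noticed this wrong-direction issue yourself earlier in the proposal and never actually escaped it.

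The missing idea is a two-step structure in which the $\Delta$-summation and the $\alpha$-gain are handled by entirely separate mechanisms. Step one: sum in $\Delta$ first, with no loss and no use of $P(p,\alpha)$. Since $(T_\Delta f)_\tau$ only involves the $O(1)$ frequency neighbors $\tilde f_{\tau'}$ of $\tau$ (a partition-of-unity decomposition of $f$ into the caps of $\P_{\delta^{1/2}}$), Minkowski's inequality in $\ell^{p/2}$ together with $\|\sum_\Delta|\psi_\Delta|^p\|_\infty\lesssim1$ gives $\sum_\Delta\|T_\Delta f\|_{p,\delta^{1/2}}^p\lesssim\bigl(\sum_{\tau\in\P_{\delta^{1/2}}}\|f_\tau\|_p^2\bigr)^{p/2}$. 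The point is that this converts the decoupled norms of the spatially localized pieces into plain $L^p$ norms of the frequency-restricted pieces $f_\tau$ of $f$ itself. Step two: for each fixed $\tau$, the affine map $L_\tau$ carries $\A_\delta\cap\tau$ onto $\A_{\delta^{1/2}}\cap([-\frac12,\frac12]^{n-1}\times\R)$ and carries the caps $\theta\in\P_\delta$ inside $\tau$ onto the caps of $\P_{\delta^{1/2}}$; applying \eqref{EE4} at scale $\delta^{1/2}$ to the rescaled function (this is where $P(p,\alpha)$ enters, and now in the correct direction, since the left-hand side $\|f_\tau\|_p$ is a genuine $L^p$ norm) yields $\|f_\tau\|_p^p\lessapprox\delta^{-\frac{p(n-1)}{8}+\frac{n+1}{4}-\frac{\alpha}{2}}(\sum_{\theta\cap\tau\neq\emptyset}\|f_\theta\|_p^2)^{p/2}$, with the Jacobian factors cancelling. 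Substituting this into the output of step one reassembles $\|f\|_{p,\delta}^p$ and produces the full exponent, including the $\frac{\alpha}{2}$-gain, which thus comes entirely from decoupling each $\tau$ into its constituent $\theta$'s and not from the spatial localization at all.
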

\begin{proof}
In the first step we get rid of the spatial cutoffs $\psi_\Delta$. By definition
$$\sum_{\Delta\;\;N^{1/2}\text{-cube}}\|T_\Delta f\|_{p,\delta^{1/2}}^p=\sum_{\Delta\;\;N^{1/2}\text{-cube}}(\sum_{\tau\in \P_{\delta^{1/2}}}\|(T_\Delta f)_\tau\|_p^2)^{p/2}.$$
Write as before using a partition of unity
$$f=\sum_{\tau'\in \P_{\delta^{1/2}}}\tilde{f}_{\tau'}$$
with $f_{\tau'}$ Fourier supported in $\tau'$.
The key observation is that
$$(T_\Delta f)_\tau=\sum_{\tau'}(T_\Delta \tilde{f}_{\tau'})_\tau,$$
where the sum on the right runs over the $O(1)$ neighbors of $\tau$. This is immediate due to frequency support considerations. We conclude that
$$\sum_{\Delta\;\;N^{1/2}\text{-cube}}(\sum_{\tau\in \P_{\delta^{1/2}}}\|(T_\Delta f)_\tau\|_p^2)^{p/2}\lesssim \sum_{\Delta\;\;N^{1/2}\text{-cube}}(\sum_{\tau\in \P_{\delta^{1/2}}}\|\psi_\Delta \tilde{f}_\tau\|_p^2)^{p/2}.$$
We further bound this first by using Minkowski's inequality, then by using that $$\|\sum_\Delta|\psi_\Delta|^p\|_\infty\lesssim 1$$ to get
$$\le (\sum_{\tau\in \P_{\delta^{1/2}}}(\sum_{\Delta\;\;N^{1/2}\text{-cube}}\|\psi_\Delta \tilde{f}_\tau\|_p^p)^{2/p})^{p/2}\lesssim (\sum_{\tau\in \P_{\delta^{1/2}}}\|f_\tau\|_p^2)^{p/2}.$$
\bigskip

 In the second step it remains to prove that for each $\tau\in \P_{\delta^{1/2}}$
$$\|f_\tau\|_p^p\lessapprox (\sum_{\theta\in \P_\delta:\:\theta\cap \tau\not=\emptyset}\|f_\theta\|_p^2)^{p/2}\delta^{-\frac{p(n-1)}{8}+\frac{n+1}{4}-\frac{\alpha}{2}}.$$
Let $a=(a_1,\ldots,a_{n-1})$ be the center of the  $\delta^{1/4}$-cube $C_\tau$, see \eqref{EE14}. We will perform the parabolic rescaling via the linear transformation
$$L_\tau(\xi_1,\ldots,\xi_n)=(\xi_1',\ldots,\xi_n')=(\frac{\xi_1-a_1}{\delta^{1/4}},\ldots,\frac{\xi_{n-1}-a_{n-1}}{\delta^{1/4}}, \frac{\xi_n-2\sum_{i=1}^{n-1}a_i\xi_i+\sum_{i=1}^{n-1}a_i^2}{\delta^{1/2}}).$$
Note that
$$\xi_n'-\sum_{i=1}^{n-1}{\xi_i'}^2={\delta^{-1/2}}(\xi_n-\sum_{i=1}^{n-1}\xi_i^2).$$
It follows that $L_\tau$ maps the Fourier support $\A_\delta\cap \tau$ of $f_\tau$ to $\A_{\delta^{1/2}}\cap ([-\frac12,\frac12]^{n-1}\times\R)$. Also, for each $\tau'\in \P_{\delta^{1/2}}$ we have that $L_\tau(\theta)=\tau'$ for some $\theta\in \P_\delta$ with $\theta\cap \tau\not =\emptyset$. Thus
$$\|f_\tau\|_p^p=\|g\|_p^p(\operatorname{det}(L_\tau))^{1-p},$$
where $g$ is the $L_\tau$ dilation of $f_\tau$ Fourier supported in $\A_{\delta^{1/2}}\cap ([-\frac12,\frac12]^{n-1}\times\R)$.
By invoking \eqref{EE4} at scale $\delta^{1/2}$ we get
$$\|g\|_p^p\lessapprox\delta^{-\frac{p(n-1)}{8}+\frac{n+1}{4}-\frac{\alpha}{2}}(\sum_{\tau'\in \P_{\delta^{1/2}}}\|g_{\tau'}\|_p^2)^{p/2}.$$
We are done if we use  the fact that
$$\|f_{\theta}\|_p^p=\|g_{\tau'}\|_p^p(\operatorname{det}(L_\tau))^{1-p}$$
whenever $L_\tau(\theta)=\tau'$.
\end{proof}

\bigskip

\section{Localization via incidence theory}\label{s6}
\bigskip

 The following is a rescaled version of Lemma 4.4 in \cite{LW}.

\begin{proposition}
\label{incwolffplandp}
Let $W\subset \R^n$ be a measurable set and let $\H$ be a collection of separated $N$-tubes. Let $\epsilon<\frac12$.  Then there exists a relation $\sim$ between $N^{1-\epsilon}$-cubes $Q$ in $\R^n$ and the tubes in $\H$ and there exists a constant $c_n$ depending only on $n$  such that
\begin{equation}
\label{EE18}
|\{Q:\;T\sim Q\}|\lessapprox 1\text{ for all }\;T\in\H
\end{equation}
\begin{equation}
\label{EE19}
\int_W\sum_{T\in\H:T\not\sim Q(x)}\phi_T(x)dx\lessapprox N^{\epsilon c_n}|\H|^{1/2}|W|,
\end{equation}
where (ignoring zero measure sets) $Q(x)$ denotes the unique $N^{1-\epsilon}$-cube containing $x$.
\end{proposition}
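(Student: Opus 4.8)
The plan is to build the relation $\sim$ by a greedy/iterative popularity argument in the spirit of Wolff's hairbrush-type estimates, and then to verify the two properties \eqref{EE18} and \eqref{EE19} by a counting argument that trades off the ``bush'' size against the number of tubes through a typical point. First I would fix the scale $\rho:=N^{1-\epsilon}$ and tile $\R^n$ by $\rho$-cubes. For each $\rho$-cube $Q$, call a tube $T\in\H$ \emph{concentrated at $Q$} if a definite proportion of its $\phi_T$-mass (say, at least $|T|/2$ out of the total $\sim |T|$) lies in $C\rho$-dilate of $Q$ — equivalently, $T$ is essentially contained in a bounded neighborhood of $Q$ since $\rho$ is comparable to but slightly smaller than the tube length $N$. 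The key geometric point is that because $\epsilon<\frac12$, we have $\rho\gtrsim N^{1/2}$ (the tube width) and $\rho$ is within a factor $N^{\epsilon}$ of $N$ (the tube length), so each tube $T$ is ``concentrated'' at only $\lessapprox N^{\epsilon c_n'}$ cubes $Q$; one should however arrange that $T\sim Q$ for only $O(\log^C N)$ of them, which is exactly \eqref{EE18}.

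For the construction of $\sim$ I would iterate: at each stage, look at the $\rho$-cube $Q$ that is ``heaviest'', meaning that $\sum_{T \text{ not yet assigned}}\int_Q \phi_T$ is largest among all cubes, declare $T\sim Q$ for all yet-unassigned $T$ that are concentrated at $Q$, remove those tubes, and repeat. The standard dyadic-pigeonholing refinement says we only need to run this over $O(\log N)$ dyadic levels of heaviness, so each tube is assigned to $\lessapprox 1$ cubes and \eqref{EE18} holds. The content of \eqref{EE19} is then: for $x$ in a fixed $\rho$-cube $Q(x)$, the tubes $T\not\sim Q(x)$ passing near $x$ are not concentrated at $Q(x)$, so each such $T$ spends only a small fraction of its length near $Q(x)$; summing $\phi_T(x)$ over all such $T$ and integrating over $W$, one bounds $\int_W \sum_{T\not\sim Q(x)}\phi_T$ by a quantity controlled by the number of tubes $|\H|$ times $|W|$ via Cauchy–Schwarz. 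Concretely, writing $\mu(x)=\sum_{T\not\sim Q(x)}\phi_T(x)$, one has $\int_W \mu \le |W|^{1/2}(\int_W \mu^2)^{1/2}$, and $\int_W\mu^2$ expands into a sum over pairs $(T,T')$ of $\int \phi_T\phi_{T'}$ restricted to where neither is $\sim$ the ambient cube; the separation hypothesis on $\H$ (no more than $C$ tubes per direction) plus the fact that unrelated tubes overlap in a region of measure $\lessapprox N^{\epsilon c_n}$ bounds this by $N^{\epsilon c_n}|\H|$, giving \eqref{EE19} after taking the square root and folding the $|W|^{1/2}\cdot |W|^{1/2}$ back together — one absorbs the losses into the $\lessapprox$ and the $N^{\epsilon c_n}$.

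The cleanest route is probably to mimic Lemma 4.4 of \cite{LW} verbatim after rescaling: there the statement is at unit scale with $\delta$-tubes, and here we have dilated everything by $N$, so the $N^{1-\epsilon}$-cubes here play the role of the $\delta^{-\epsilon\cdot(\text{something})}$-cubes there and the constant $c_n$ is inherited unchanged. I would state explicitly the dilation $x\mapsto x/N^{1/2}$ (or the affine maps $a_T$) that converts our $N$-tubes to the normalized tubes of \cite{LW}, check that separatedness and the weight $\phi_T$ are preserved up to constants, quote their lemma, and dilate back; the bookkeeping of how $N^{1-\epsilon}$-cubes transform is routine. The main obstacle — and the only place any real work happens — is making sure the exponent on the slightly-shrunk scale $N^{1-\epsilon}$ (rather than the full tube length $N$) is handled correctly in the overlap estimate: two tubes with an angle $\gtrsim N^{-1/2}$ between them and with the relation failing must genuinely be ``transverse at scale $N^{1-\epsilon}$'', and one needs $\epsilon<\frac12$ precisely so that over a length $N^{1-\epsilon}$ the tubes still separate by more than a width $N^{1/2}$; verifying this transversality-at-the-right-scale, and that it produces the clean $N^{\epsilon c_n}$ loss rather than something worse, is the crux.
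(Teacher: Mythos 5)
Your fallback plan in the last paragraph --- rescale and quote Lemma 4.4 of \cite{LW} --- is exactly what the paper does (it cites \cite{LW} and only sketches the point--line model case), so if you had stopped there the proposal would be fine. But the self-contained argument you sketch in the first two paragraphs has genuine gaps, and the main one is that your Cauchy--Schwarz is applied in the wrong variable and leans on the wrong incidence axiom. Writing $\mu(x)=\sum_{T\not\sim Q(x)}\phi_T(x)$ and using $\int_W\mu\le|W|^{1/2}(\int_W\mu^2)^{1/2}$ can only ever produce a bound of the form $|W|^{1/2}\cdot(\cdots)$; there is no second factor of $|W|^{1/2}$ to ``fold back together,'' so you cannot reach the target $|\H|^{1/2}|W|$ this way (and the claimed bound $\int_W\mu^2\lesssim N^{\epsilon c_n}|\H|$ is false on its face: the diagonal pairs alone contribute $\sum_T\int\phi_T^2\sim|\H|\,N^{(n+1)/2}$). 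Worse, your control of the off-diagonal pairs rests on ``unrelated tubes overlap in a region of measure $\lessapprox N^{\epsilon c_n}$,'' which is the two-lines-meet-in-one-point axiom --- precisely the axiom the paper points out, in the paragraph following the proposition, \emph{fails dramatically} in the tube--cube world without transversality. The correct mechanism (visible in the paper's model computation $I\le|\W'|^{1/2}(\tfrac{|\P|^2}{2}+I)^{1/2}$) is Cauchy--Schwarz over the \emph{tubes}: one bounds $\sum_T\bigl(\sum_{Q\not\sim T}|W\cap Q\cap T|\bigr)$ by $|\H|^{1/2}$ times the square root of a count of pairs of cubes lying on a common tube, and then uses the surviving axiom that two well-separated $N^{1-\epsilon}$-cubes determine only $\lessapprox N^{\epsilon c_n}$ tubes through both (this is where $\epsilon<\tfrac12$ enters, since the cube side $N^{1-\epsilon}$ must dominate the tube width $N^{1/2}$). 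That step, together with a definition of $\sim$ that absorbs the tubes meeting essentially only one cube of $W$, is the content of the lemma, and it is absent from your sketch.

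A secondary but real problem is your definition of the relation. A tube of length $N$ meets $\sim N^{\epsilon}$ cubes of side $N^{1-\epsilon}$, each carrying only a fraction $\sim N^{-\epsilon}$ of its $\phi_T$-mass, so for $\epsilon>0$ and $N$ large \emph{no} cube contains ``at least $|T|/2$'' of the mass: your notion of ``concentrated at $Q$'' is vacuous, and the claim that $N^{1-\epsilon}$ is ``comparable to'' the tube length $N$ is off by the power $N^{\epsilon}$ that the whole lemma is designed to track. The greedy heaviest-cube iteration built on this notion therefore assigns nothing, and even repaired (e.g.\ by relating $T$ to the cubes where it meets $W$ most heavily) you would still need the tube-side Cauchy--Schwarz above to get the $|\H|^{1/2}$. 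I would recommend either carrying out the rescaling of Lemma 4.4 of \cite{LW} explicitly, or redoing the counting argument with the roles of tubes and cubes as in the paper's point--line sketch.
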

The left hand side essentially counts the incidences between tubes and points. We refer the reader to \cite{LW} for the proof, which is remarkably simple. The key tool is the axiom that two points determine a line. We describe the proof in the case when we count incidences between finitely many points $\P$ and lines $\W$ in $\R^n$. We declare that $L\sim P$ if $P$ is the only point in $L\cap \P$. Note that
$$|\{P\in\P:\;L\sim P\}|\le 1\text{ for all }\;L\in\W,$$
in analogy to \eqref{EE18}.

Also, by Cauchy-Schwartz
$$I:=\sum_{P\in\P}\sum_{L\in\W\atop{L\not\sim P}}1=\sum_{L\in\W'}\sum_{P\in\P\cap L}1\le {|\W'|}^{1/2}(\frac{|\P^2|}{2}+I)^{1/2},$$
where $\W'$ denotes the lines in $\W$ containing more than one point from $\P$. Since $I\ge 2|\W'|$, we must have
$$I\le |\W'|^{1/2}|\P|\le |\W|^{1/2}|\P|,$$
which is the analogue of \eqref{EE19}.

There is no obvious way in which Proposition \ref{incwolffplandp} may be improved without any assumption on the structure of $W$. The generic example -relevant also for our forthcoming application- is when $W$ consists of $J$ pairwise disjoint $N^{1/2}$-cubes. Even in two dimensions, one can easily device such a $W$ and a collection of tubes $\H$ such that the number of incidences between the cubes and tubes is $\gg (J|\H|)^{2/3}+|\H|+J$, in sharp contrast with the Szemer\'edi-Trotter theorem. The explanation is that, without further assumption on $W$ or $\H$, the second crucial line-point axiom -the one about two lines intersecting in at most one point- fails dramatically in the tube-cube world. This axiom is regained if the tubes are transverse, a feature incorporated in a deep way by the multilinear theory. And indeed, the range we obtain in Theorem \ref{thmmm1} will follow by combining Wolff's technology with the more recent multilinear machinery, via Theorem \ref{thm:Bor}.
\bigskip

Here is how Proposition \ref{incwolffplandp} contributes to the general argument.
For $p>\frac{2(n+2)}{n-1}$ we fix a small enough $\epsilon_p>0$ whose value will be clear from the argument in the following sections.
\begin{definition}
Let $f$ be an $N$-function. We say that it $p$- localizes at some $\lambda>0$ if for each $N^{1-\epsilon_p}$-cube $Q$ there are subfunctions $f^Q$ of $f$ such that
\begin{equation}
\label{EE21}
|\{|f|\ge \lambda\}|\lesssim \sum_Q|Q\cap \{|f^Q|\ge \lambda/2\}|,
\end{equation}
and for each $\theta\in\P_{1/N}$ we have
\begin{equation}
\label{EE20}
\sum_Q|\H(f^Q,\theta)|\lessapprox |\H(f,\theta)|.
\end{equation}
\end{definition}
The reader will note that the concept of localization is sensitive to the value of $p$.
\begin{lemma}
\label{hfuiy43578tyufrugf}
Let $f$ be an $N$-function and let $\lambda>0$ be such that
$$
|\H(f)|\le C^{-1}N^{-4\epsilon_pc_n}\lambda^2,
$$
for some large enough $C$ depending on $n$.
Then $f$ $p$- localizes at $\lambda$.
\end{lemma}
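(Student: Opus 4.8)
I want to show that an $N$-function $f$ satisfying the smallness condition $|\H(f)|\le C^{-1}N^{-4\epsilon_p c_n}\lambda^2$ admits subfunctions $f^Q$ (one for each $N^{1-\epsilon_p}$-cube $Q$) satisfying \eqref{EE21} and \eqref{EE20}. The natural candidate is to invoke Proposition \ref{incwolffplandp} with $W=\{|f|\ge\lambda\}$, $\H=\H(f)$ and $\epsilon=\epsilon_p$, obtaining a relation $\sim$ between $N^{1-\epsilon_p}$-cubes and tubes, and then to \emph{define} $f^Q=\sum_{T\in\H(f):\,T\sim Q}f_T$. With this definition, property \eqref{EE20} is essentially immediate: \eqref{EE18} says $|\{Q:T\sim Q\}|\lessapprox 1$ for each $T$, so summing over $Q$ and over $T\in\H(f,\theta)$ gives $\sum_Q|\H(f^Q,\theta)|=\sum_{T\in\H(f,\theta)}|\{Q:T\sim Q\}|\lessapprox|\H(f,\theta)|$. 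So the real content is \eqref{EE21}.

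\textbf{Proving the covering inequality \eqref{EE21}.} The point is to show that on most of the level set $\{|f|\ge\lambda\}$, the ``bad'' part of $f$ — namely $f-f^{Q(x)}=\sum_{T\not\sim Q(x)}f_T$ — contributes less than $\lambda/2$ in absolute value, so that $|f^{Q(x)}(x)|\ge\lambda/2$ there; this forces $\{|f|\ge\lambda\}$ to be covered (up to a set handled separately) by $\bigcup_Q(Q\cap\{|f^Q|\ge\lambda/2\})$. To make this quantitative I would estimate the measure of the set where the bad part is large: using $|f_T|\le\phi_T$ and Chebyshev/Markov,
\[
\Big|\Big\{x\in\{|f|\ge\lambda\}:\ \sum_{T\not\sim Q(x)}\phi_T(x)>\lambda/2\Big\}\Big|\ \le\ \frac{2}{\lambda}\int_{\{|f|\ge\lambda\}}\sum_{T\in\H:\,T\not\sim Q(x)}\phi_T(x)\,dx\ \lessapprox\ \frac{N^{\epsilon_p c_n}|\H(f)|^{1/2}}{\lambda}\,|\{|f|\ge\lambda\}|,
\]
by \eqref{EE19} applied with $W=\{|f|\ge\lambda\}$. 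Now the hypothesis $|\H(f)|\le C^{-1}N^{-4\epsilon_p c_n}\lambda^2$ gives $N^{\epsilon_p c_n}|\H(f)|^{1/2}/\lambda\le C^{-1/2}N^{-\epsilon_p c_n}\le C^{-1/2}$, which for $C$ large enough (and absorbing the $\lessapprox$ logarithmic factors into $C$, recalling $N=\delta^{-1}$ so $\log N$ powers are polynomial-dominated) makes this measure at most $\tfrac12|\{|f|\ge\lambda\}|$. Hence on at least half of $\{|f|\ge\lambda\}$ we have $|f^{Q(x)}(x)|\ge|f(x)|-\sum_{T\not\sim Q(x)}|f_T(x)|\ge\lambda-\lambda/2=\lambda/2$, and this half is contained in $\bigcup_Q(Q\cap\{|f^Q|\ge\lambda/2\})$; since the cubes $Q$ partition $\R^n$ (up to measure zero) this yields $|\{|f|\ge\lambda\}|\le 2\sum_Q|Q\cap\{|f^Q|\ge\lambda/2\}|$, which is \eqref{EE21}.

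\textbf{Main obstacle and bookkeeping.} The main obstacle is bookkeeping the $\lessapprox$ versus $\lesssim$ distinction: Proposition \ref{incwolffplandp} produces $\lessapprox$ bounds (losing powers of $\log(1/\delta)=\log N$), both in \eqref{EE18} and in \eqref{EE19}, whereas the conclusion of Lemma \ref{hfuiy43578tyufrugf} as used downstream presumably wants \eqref{EE21} with a clean $\lesssim$ and \eqref{EE20} with a $\lessapprox$. The clean resolution is that the $\log N$ factor in \eqref{EE19} is harmless because the smallness hypothesis has a genuine power $N^{-4\epsilon_p c_n}$ to spare against the $N^{\epsilon_p c_n}$ in \eqref{EE19} — indeed $N^{-4\epsilon_p c_n}$ beats $N^{-2\epsilon_p c_n}(\log N)^{O(1)}$ for $N$ large — so after taking the square root one still gets a factor $\le C^{-1/2}$ and the $\lesssim$ in \eqref{EE21} survives. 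One should also dispatch the degenerate case $|\{|f|\ge\lambda\}|=0$ trivially, and note that the factor $2$ loss in $\{|f^Q|\ge\lambda/2\}$ is exactly why the definition uses $\lambda/2$ and not $\lambda$. Finally, one must confirm the $f^Q$ so defined are genuinely \emph{subfunctions} of $f$ in the sense of the definition — but that is automatic since $\H(f^Q)=\{T\in\H(f):T\sim Q\}\subset\H(f)$ and the $f_T$ are unchanged.
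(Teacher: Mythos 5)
Your proof is correct and follows essentially the same route as the paper's: apply Proposition \ref{incwolffplandp} to $W=\{|f|\ge\lambda\}$, define $f^Q=\sum_{T\sim Q}f_T$, use Chebyshev plus \eqref{EE19} and the smallness hypothesis (with the spare power $N^{-\epsilon_p c_n}$ absorbing the logarithmic losses) to show the off-relation tail is below $\lambda/2$ on a large portion of $W$, and deduce \eqref{EE20} directly from \eqref{EE18}. No gaps.
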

\begin{proof}
Let $W=\{|f|\ge \lambda\}$.
Using Proposition \ref{incwolffplandp} we get
$$\int_W\sum_{T\in\H(f):T\not\sim Q(x)}\phi_T(x)dx\lessapprox C^{-1}N^{-\epsilon_p c_n}\lambda|W|\le \frac{\lambda}4|W|,$$
if $C$ is chosen large enough.
Hence there is a set $W^*\subset W$ with $|W|\lesssim |W^*|$ such that
$$\sum_{T\in\H(f):T\not\sim Q(x)}\phi_T(x)\le \frac\lambda2,\;\text{ for }x\in W^*.$$
For each $Q$ define $f^Q=\sum_{T\in \H(f):T\sim Q}f_T$. Note that if $x\in W^*\cap Q$ we have
$$|f(x)-f^Q(x)|=|\sum_{T\in \H(f):T\not\sim Q}f_T(x)|\le \sum_{T\in\H(f):T\not\sim Q(x)}\phi_T(x)\le \frac\lambda2,$$
and \eqref{EE21} follows by summing over $Q$.

Finally, note that \eqref{EE20} is an immediate consequence of \eqref{EE18}.
\end{proof}
We are now able to verify Proposition \ref{mainmech} in the particular case when we deal with $N$-functions that localize.
\begin{proposition}
\label{dbncvyer78f589ty}
Assume $P(p,\alpha)$ holds for some $p>\frac{2(n+2)}{n-1}$ and $\alpha>0$. Let $f$ be an $N$-function which $p$-  localizes at some  $\lambda$.  Then $P(p,\beta)$ will hold for $f$ and $\lambda$ with $\beta:=\alpha(1-\frac{\epsilon_p}{2})$, more precisely
$$|\{|f|\ge \lambda\}|\lesssim(\frac{\delta^{-\frac{n-1}{4}+\frac{n+1}{2p}-\frac{\beta}{p}}}{\lambda})^p\|f\|_{p,\delta}^p.$$
\label{maingain}
\end{proposition}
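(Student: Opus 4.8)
The plan is to exploit $p$-localization to reduce the $L^p$ estimate for $f$ to the hypothesis $P(p,\alpha)$ applied at the smaller scale $N^{1-\epsilon_p}$ (i.e., to the functions $f^Q$ restricted to the cube $Q$), and then to sum the resulting contributions using the packing control \eqref{EE20}. First I would start from \eqref{EE21} to write
\[
|\{|f|\ge\lambda\}|\lesssim \sum_Q|Q\cap\{|f^Q|\ge\lambda/2\}|.
\]
Each $f^Q$ is a subfunction of the $N$-function $f$, hence Fourier supported in $\A_\delta$, with $\|f^Q\|_{\infty,\delta}\lesssim 1$. The key point is that on the cube $Q$ of side $N^{1-\epsilon_p}$ one should think of $f^Q$ as essentially Fourier supported at scale $\delta' := N^{-(1-\epsilon_p)} = \delta^{1-\epsilon_p}$ after the natural parabolic rescaling (localizing $\hat{f^Q}$ to a dual $\delta^{(1-\epsilon_p)/2}$-cap and rescaling as in Section \ref{s5}); the cube $Q$ becomes a $(\delta')^{-1}$-cube on which the level-set hypothesis $P(p,\alpha)$ applies with $\delta$ replaced by $\delta'$. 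Thus for each $Q$,
\[
|Q\cap\{|f^Q|\ge\lambda/2\}|\lesssim\Big(\frac{(\delta')^{-\frac{n-1}{4}+\frac{n+1}{2p}-\frac{\alpha}{p}}}{\lambda}\Big)^p\|f^Q\|_{2}^2,
\]
up to the bookkeeping of rescaling Jacobians, which cancel as in the proof of Proposition \ref{dfyyer7guiouy8ugiogbhjgfvlkfgnbjkl}.

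Next I would convert the right-hand side into plate counts using \eqref{EE2}: for the $N$-function $f$ one has $\|f\|_2^2\sim N^{\frac{n+1}{2}}|\H(f)|$ (summing the per-$\theta$ estimate), and similarly $\|f^Q\|_2^2\sim N^{\frac{n+1}{2}}|\H(f^Q)|\sim N^{\frac{n+1}{2}}\sum_\theta|\H(f^Q,\theta)|$. Summing the displayed bound over $Q$ and invoking \eqref{EE20}, $\sum_Q|\H(f^Q,\theta)|\lessapprox|\H(f,\theta)|$, gives
\[
|\{|f|\ge\lambda\}|\lessapprox\Big(\frac{(\delta^{1-\epsilon_p})^{-\frac{n-1}{4}+\frac{n+1}{2p}-\frac{\alpha}{p}}}{\lambda}\Big)^p N^{\frac{n+1}{2}}|\H(f)|.
\]
Now I would compare the exponent of $\delta$: writing $(\delta^{1-\epsilon_p})^{-\frac{n-1}{4}+\frac{n+1}{2p}-\frac{\alpha}{p}} = \delta^{-\frac{n-1}{4}+\frac{n+1}{2p}-\frac{\alpha}{p}}\cdot\delta^{\epsilon_p(\frac{n-1}{4}-\frac{n+1}{2p}+\frac{\alpha}{p})}$. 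Since $p>\frac{2(n+2)}{n-1}>\frac{2(n+1)}{n-1}$, we have $\frac{n-1}{4}-\frac{n+1}{2p}>0$, so the extra factor is $\delta^{\epsilon_p\frac{\alpha}{p}}$ times something $\ge\delta^{0}$ with a favorable (positive-exponent, hence $\le 1$) power of $\delta$ to spare; in particular the net change in the exponent of $\delta^{1/p}$ is at least $\epsilon_p\alpha$, and actually it suffices to absorb the positive part and retain the gain $\delta^{+\epsilon_p\alpha/(2p)}$ to cover the $\lessapprox$ logarithmic losses for $\delta$ small. This yields exactly the claimed bound with $\beta=\alpha(1-\frac{\epsilon_p}{2})$, using once more $\|f\|_{p,\delta}^p\gtrsim N^{\frac{n+1}{2}}|\H(f)|$ from \eqref{EE2} (the first inequality in \eqref{EE8} type reasoning) to replace $N^{\frac{n+1}{2}}|\H(f)|$ by $\|f\|_{p,\delta}^p$ on the right.

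The main obstacle I anticipate is the rescaling step: justifying rigorously that $f^Q$ on the cube $Q$ behaves like a function at scale $\delta^{1-\epsilon_p}$ to which $P(p,\alpha)$ genuinely applies. This requires (i) decomposing $\hat{f^Q}$ along $\delta^{(1-\epsilon_p)/2}$-caps $\tau$, (ii) applying the parabolic rescaling $L_\tau$ of Section \ref{s5} and checking that $\A_\delta\cap\tau$ maps into $\A_{\delta^{1-\epsilon_p}}$, (iii) tracking that the tubes $T\sim Q$, which have dimensions $N^{1/2}\times\cdots\times N^{1/2}\times N$, map to honest $N^{1-\epsilon_p}$-tubes and that the associated plate counts $|\H(f^Q,\theta)|$ transform correctly, and (iv) handling the spatial cutoff to $Q$ and the mild overlap of caps, all of which contribute only $\lessapprox 1$ losses. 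A secondary technical point is that $P(p,\alpha)$ as stated requires $\|\cdot\|_{\infty,\delta'}\le 1$; by multiplication invariance (the remark after the definition of $P(p,\alpha)$) one may instead pay $\|f^Q\|_{\infty,\delta'}^{p-2}$, which is $\lesssim 1$ since $f^Q$ is a subfunction of an $N$-function. None of these introduces powers of $\delta$, so the gain $\delta^{\epsilon_p\alpha/p}$ survives intact, proving the proposition.
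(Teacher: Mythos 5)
Your overall strategy is the paper's: use \eqref{EE21} to pass to the cubes $Q$, apply the inductive hypothesis at the coarser scale $\delta'=\delta^{1-\epsilon_p}$ on each $Q$, and resum via \eqref{EE20} and \eqref{EE2}. However, there are two genuine problems. First, the mechanism by which $f^Q$ becomes a function "at scale $\delta'$" is misdescribed. Parabolically rescaling a $\delta^{(1-\epsilon_p)/2}$-cap sends $\A_\delta\cap\tau$ to $\A_{\delta^{\epsilon_p}}$, not to $\A_{\delta^{1-\epsilon_p}}$, and no single such cap captures $\hat{f^Q}$ anyway; the correct device is purely spatial: one multiplies by $\psi_Q$, so that $T_Qf^Q=\psi_Qf^Q$ dominates $f^Q$ on $Q$ (since $|\psi_Q|\gtrsim 1$ there) and has Fourier support in $\A_{\delta'}$ because convolution with $\widehat{\psi_Q}$ thickens $\A_\delta$ by $(N^{1-\epsilon_p})^{-1}$. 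There are no Jacobians to track at this step.

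Second, and fatally for the induction, you apply the raw form of $P(p,\alpha)$ at scale $\delta'$ and assert that the price $\|f^Q\|_{\infty,\delta'}^{p-2}$ is $O(1)$ because $f^Q$ is a subfunction of an $N$-function. This is false: for an $N$-function occupying $M$ sectors one has $\|f^Q\|_{\infty,\delta}\sim M(f^Q)^{1/2}$, which can be as large as $\delta^{-\frac{n-1}{4}}$, and passing from the $\P_\delta$-decomposition to the $\P_{\delta'}$-decomposition costs a further factor up to $\delta^{-\frac{\epsilon_p(n-1)}{4}}$ (each $\theta'\in\P_{\delta'}$ contains $\delta^{-\frac{\epsilon_p(n-1)}{2}}$ caps of $\P_\delta$). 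Even after the balancedness reductions of Lemma \ref{lemmabalancednfunc} that would let you trade $\|\cdot\|_2^2\|\cdot\|_{\infty,\delta}^{p-2}$ for $\|\cdot\|_{p,\delta}^p$, the residual scale-change loss $\delta^{-\frac{\epsilon_p(n-1)(p-2)}{4}}$ must be weighed against your gain $\delta^{\epsilon_p\alpha}\cdot\delta^{\epsilon_p(\frac{p(n-1)}{4}-\frac{n+1}{2})}$; the net exponent is $\epsilon_p(\alpha-1)$, which is negative precisely when $\alpha<1$ --- the regime that the iteration of Proposition \ref{mainmech} must reach to prove the theorem. The paper's proof avoids this by invoking the self-improved form of the hypothesis, Proposition \ref{equivproppalpha}, which puts $\|T_Qf^Q\|_{p,\delta'}^p$ (not $\|f^Q\|_2^2\|f^Q\|_{\infty,\delta'}^{p-2}$) on the right, and then the rescaling estimate \eqref{EE25} with $\sigma=\delta^{\epsilon_p}$, whose factor $\sigma^{\frac{n+1}{2p}-\frac{n-1}{4}}$ cancels the non-$\alpha$ part of the exponent change exactly and leaves the clean gain $\delta^{\epsilon_p\alpha}$, of which half is kept and half absorbs the logarithmic losses. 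You need this route (or an equivalent control of the $\infty$-norms) for the proposition to do its job.
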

\begin{proof}
In the following, $Q$ will refer to $N^{1-\epsilon_p}$- cubes. We have due to localization
$$|\{|f|\ge \lambda\}|\lesssim\sum _Q|Q\cap\{|f^Q|\ge \lambda/2\}|\le \sum _Q|\{|T_Qf^Q|\gtrsim \lambda\}|.$$
Invoking $P(p,\alpha)$ and Proposition \ref{equivproppalpha} for $T_Qf^Q$ we can bound the above by
$$\lessapprox(\frac{\delta^{(-\frac{n-1}{4}+\frac{n+1}{2p}-\frac{\alpha}{p})(1-\epsilon_p)}}{\lambda})^p\sum _Q\|T_Qf^Q\|_{p,\delta^{1-\epsilon_p}}^p,$$
while \eqref {EE25} with $\sigma=\delta^{\epsilon_p}$ establishes the bound
$$\lessapprox(\frac{\delta^{(-\frac{n-1}{4}+\frac{n+1}{2p}-\frac{\alpha}{p})(1-\epsilon_p)}}{\lambda})^p\delta^{\frac{\epsilon_p(n+1)}{2}-\frac{\epsilon_p p(n-1)}{4}}\sum _Q\|f^Q\|_{p,\delta}^p.$$
Finally note that by \eqref{EE2}, Minkowski's inequality and \eqref{EE20}
$$\sum _Q\|f^Q\|_{p,\delta}^p
\sim \sum _Q N^{\frac{n+1}{2}}(\sum_{\theta}|\H(f^Q,\theta)|^{\frac2p})^{p/2}\le$$
$$\le N^{\frac{n+1}{2}}(\sum_\theta(\sum_Q|\H(f^Q,\theta)|)^{2/p})^{p/2}\lessapprox N^{\frac{n+1}{2}}(\sum_\theta|\H(f,\theta)|^{2/p})^{p/2}\sim \|f\|_{p,\delta}^p.$$
The three logarithmic losses due to $\lessapprox$ are compensated by the fact that we choose $\beta=\alpha(1-\frac{\epsilon_p}{2})$ rather than  $\beta=\alpha(1-{\epsilon_p})$
\end{proof}
\bigskip

\section{Proof of Proposition \ref{mainmech}}\label{s7}
\bigskip

We will show that we can take $\gamma=1-\frac{\epsilon_p}{8}$, if  $\epsilon_p$ is small enough.

Let $\delta=\frac1N$ and assume $\|f\|_{\infty,\delta}=1$.  Observe first that  \eqref{EE3} holds for each $\alpha>0$,  if $\lambda\lesssim N^{\frac{n-1}{4}-\frac{n-1}{2(pn-p-2n)}}$. Indeed, using the subcritical estimate \eqref{Snew29} and \eqref{EE1} we get
$$\|f\|_{\frac{2n}{n-1}}\lesssim_\epsilon{N^{\epsilon}}\|f\|_{\frac{2n}{n-1},\delta}\le {N^{\epsilon}}\|f\|_2^{\frac{n-1}{n}}.$$
Thus by Tchebyshev's inequality, for each $\lambda,\epsilon>0$
$$|\{|f|>\lambda\}|\lesssim_\epsilon N^{\epsilon}\lambda^{-\frac{2n}{n-1}}\|f\|_2^2.$$
It suffices now to note that for each  $\lambda\lesssim N^{\frac{n-1}{4}-\frac{n-1}{2(pn-p-2n)}}$ we have
$$\lambda^{-\frac{2n}{n-1}}N^{\epsilon}\le (\frac{\delta^{-\frac{n-1}{4}+\frac{n+1}{2p}-\frac{\alpha}{p}}}{\lambda})^p,$$
if $\epsilon$ is small enough.

We can now assume $\lambda\gtrsim N^{\frac{n-1}{4}-\frac{n-1}{2(pn-p-2n)}}$. First note that for each $\epsilon>0$
$$|\{|f|>\lambda\}|=|\{|\sum_{\Delta\;N^{1/2}-\text{cube}}T_\Delta f|>\lambda\}|\le\sum_{\Delta\;N^{1/2}-\text{cube}}|\{|T_\Delta f|\gtrsim_{\epsilon}\delta^{\epsilon}\lambda\}|,$$
due to the Schwartz decay of $\psi$ and the fact that $\|f\|_\infty\le N^{\frac{n-1}{4}}$.

By Lemma \ref{WPD}
$$T_\Delta f=\sum_{h\lesssim \|T_\Delta f\|_{\infty, \delta^{1/2}}}hf_{\Delta,h}.$$
We can further bound the above by
$$\sum_{\Delta\;N^{1/2}-\text{cube}}|\{|\sum_{\delta^{\epsilon}\lambda\lesssim_\epsilon h\lesssim \|T_\Delta f\|_{\infty, \delta^{1/2}}}hf_{\Delta,h}|\gtrsim_{\epsilon}\delta^{\epsilon}\lambda\}|\le$$
\begin{equation}
\label{EE41}
\sum_{\Delta\;N^{1/2}-\text{cube}}\sum_{\delta^{\epsilon}\lambda\lesssim_\epsilon h\lesssim \|T_\Delta f\|_{\infty, \delta^{1/2}}}|\{|f_{\Delta,h}|\gtrsim_{\epsilon}h^{-1}\delta^{2\epsilon}\lambda\}|,
\end{equation}
since there are $\lessapprox 1$ many $h$ in the summation.
The crucial gain will come from the fact that $f_{\Delta,h}$ $p$- localizes at  $h^{-1}\delta^{\epsilon_p'}\lambda$, if $\epsilon_p'$ is chosen small enough. Let us see why this holds.
From \eqref {EE8} with $p=2$ and \eqref{EE13} we obtain
$$h^2N^{\frac{n+1}{4}}|\H(f_{\Delta,h})|\lesssim \|T_\Delta f\|_{2,\delta^{1/2}}^2\lesssim N^{n/2}.$$
It is now an easy computation to check that the hypothesis of Lemma \ref{hfuiy43578tyufrugf} is satisfied with $f$ and $\lambda$ replaced by $f_{\Delta,h}$ and $ h^{-1}\delta^{\epsilon_p'}\lambda,$ if we select a small enough $\epsilon_p'$ and $\epsilon_p$.

With localization at hand, we use Proposition \ref{dbncvyer78f589ty}, then \eqref{EE8} and Proposition \ref{dfyyer7guiouy8ugiogbhjgfvlkfgnbjkl} to bound  \eqref{EE41} when $\epsilon=\frac{\epsilon_p'}{2}$ by
$$\sum_{\Delta\;N^{1/2}-\text{cube}}\sum_{\delta^{\frac{\epsilon_p'}2}\lambda\lesssim h\lesssim \|T_\Delta f\|_{\infty, \delta^{1/2}}}(h\frac{\delta^{-\frac{n-1}{8}+\frac{n+1}{4p}-\frac{\beta}{2p}-\epsilon_p'}}{\lambda})^p\|f_{\Delta,h}\|_{p,\delta^{1/2}}^p\lessapprox$$
$$\sum_{\Delta\;N^{1/2}-\text{cube}}(\frac{\delta^{-\frac{n-1}{8}+\frac{n+1}{4p}-\frac{\beta}{2p}-\epsilon_p'}}{\lambda})^p\|T_\Delta f\|_{p,\delta^{1/2}}^p\lessapprox$$
$$\delta^{-\frac{p(n-1)}{8}+\frac{n+1}{4}-\frac{\alpha}{2}}(\frac{\delta^{-\frac{n-1}{8}+\frac{n+1}{4p}-\frac{\beta}{2p}-\epsilon_p'}}{\lambda})^p\|f\|_{p,\delta}^p=
(\frac{\delta^{-\frac{n-1}{4}+\frac{n+1}{2p}-\frac{\beta'}{p}}}{\lambda})^p\|f\|_{p,\delta}^p,$$
where $$\beta'={\alpha(1-\frac{\epsilon_p}{4})+p\epsilon_p'}.$$
Choosing $\epsilon_p'$ small enough  will guarantee that $\beta'<\alpha(1-\frac{\epsilon_p}{8})$. This ends the proof.
\bigskip

\section{Additive energies and incidence theory}
\label{s8}
\bigskip

Given an integer $k\ge 2$ and a set $\Lambda$ in $\R^n$ we introduce its $k$- energy
$$\E(\Lambda)=|\{(\lambda_1,\ldots,\lambda_{2k})\in \Lambda^{2k}:\;\lambda_1+\ldots+\lambda_k=\lambda_{k+1}+\ldots\lambda_{2k}\}|.$$

We recall the point-line incidence theorem due to Szemer\'edi and Trotter
\begin{theorem}[\cite{ST}]
 There are $O(|\W|+|\P|+(|\W||\P|)^{2/3})$ incidences between any collections $\W$ and $\P$ of  lines and points in the plane.
\end{theorem}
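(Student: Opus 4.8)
The plan is to use Sz\'ekely's crossing-number argument, which delivers the stated bound in a few lines once the crossing lemma is in place. Write $I$ for the total number of incidences between $\W$ and $\P$. First I would strip off the degenerate contributions: lines of $\W$ containing at most one point of $\P$ contribute at most $|\W|$ incidences in all, and points of $\P$ lying on at most one line of $\W$ contribute at most $|\P|$ incidences in all. Discarding these, it suffices to bound the number $I'$ of remaining incidences, under the assumption that every surviving line passes through at least two surviving points, by $O((|\W||\P|)^{2/3})$.

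Next I would form the \emph{Sz\'ekely graph} $G$, drawn in the plane as follows: its vertex set is $\P$, and for each line $\ell\in\W$ one joins consecutive points of $\ell\cap\P$ along $\ell$ by an edge. Since two points determine a unique line, $G$ is a simple graph; a line carrying $m_\ell\ge 2$ points of $\P$ contributes exactly $m_\ell-1$ edges, so $G$ has $V:=|\P|$ vertices and $E:=\sum_{\ell}(m_\ell-1)\ge I'-|\W|$ edges. The only further geometric input is the dual axiom that two distinct lines meet in at most one point: in the drawing above this forces the number of crossings to be at most $\binom{|\W|}{2}\le\tfrac12|\W|^2$, since each pair of lines produces at most one crossing.

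Then I would invoke the crossing lemma: every graph drawn in the plane with $V$ vertices and $E\ge 4V$ edges has at least $E^3/(64V^2)$ crossings. I would prove this in passing from Euler's formula, which gives $\operatorname{cr}(G)\ge E-3V$ for any drawing, by retaining each vertex independently with probability $p$, taking expectations, and optimizing at $p\sim V/E$. If $E<4|\P|$ then $I'\le 4|\P|+|\W|$ and we are done; otherwise, comparing the lower bound $E^3/(64|\P|^2)$ with the upper bound $\tfrac12|\W|^2$ gives $E\lesssim(|\W||\P|)^{2/3}$, hence $I'\le E+|\W|\lesssim(|\W||\P|)^{2/3}+|\W|$. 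Restoring the incidences discarded in the first step yields $I=O(|\W|+|\P|+(|\W||\P|)^{2/3})$, as claimed.

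There is no serious obstacle along this route; essentially all of the content lies in the crossing lemma, whose probabilistic deletion step (the threshold $E\ge 4V$ and the choice of $p$) is the one point requiring care. If one prefers to avoid graph theory, the alternative is the original cell-decomposition argument of Szemer\'edi and Trotter: fix a parameter $r$, use $\sim r$ of the lines to cut the plane into $O(r^2)$ open cells each met by $O(|\W|/r)$ of the lines, estimate the incidences strictly inside each cell by the trivial bound $I(P,L)\le P+\binom{L}{2}$, treat separately the incidences that occur on the cutting lines, and optimize in $r$; this is elementary but the bookkeeping of the boundary incidences is more delicate than the crossing-number estimate.
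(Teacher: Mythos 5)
Your argument is correct. Note that the paper offers no proof of this statement at all---it is imported wholesale from \cite{ST}---so there is nothing internal to compare against; the only incidence bound the paper proves from first principles is the much weaker Cauchy--Schwarz estimate $I\le|\W|^{1/2}|\P|$ sketched in Section 6, which uses only the axiom that two points determine a line and deliberately avoids the second axiom (two lines meet in at most one point) that your proof exploits through the crossing count. Your Sz\'ekely argument is the standard modern proof and the key points are all in place: the graph is simple because a pair of points can be consecutive on at most one line; the number of crossings in the drawing is at most $|\W|^2/2$ because two lines meet at most once and such an intersection point lies in at most one open edge of each line; the crossing lemma with threshold $E\ge 4V$ is correctly stated and its probabilistic deletion proof correctly outlined; and the regime $E<4|\P|$ is properly absorbed into the linear terms. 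One substantive remark: the paper later (Section 8) needs a Szemer\'edi--Trotter bound not for lines but for circles centered on a fixed axis, i.e., for a family of curves in which two curves meet in $O(1)$ points and $O(1)$ curves pass through any two given points (it cites Theorem 8.10 in \cite{TV} for this). Your crossing-number route adapts to that setting with only minor changes---the Sz\'ekely graph becomes a multigraph of bounded edge multiplicity, which the crossing lemma tolerates---which is a genuine advantage over the cell-decomposition alternative you mention, where the boundary bookkeeping does not transfer as cleanly.
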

Up to extra logarithmic factors, the same thing is conjectured to hold if lines are replaced with circles. Another related conjecture is
\begin{conjecture}[The unit distance conjecture]
The number of unit distances between $N$ points in the plane is always $\lesssim_\epsilon N^{1+\epsilon}$
\end{conjecture}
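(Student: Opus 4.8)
The plan is to follow the point--circle incidence strategy of Spencer, Szemer\'edi and Trotter. Given a set $\Lambda$ of $N$ points in the plane, draw the unit circle $C_\lambda$ centered at each $\lambda\in\Lambda$, obtaining a family $\mathcal{C}$ of $N$ congruent circles. Every unit distance $|\lambda-\lambda'|=1$ contributes exactly two incidences to the bipartite structure $(\Lambda,\mathcal{C})$ (namely $\lambda'\in C_\lambda$ and $\lambda\in C_{\lambda'}$), so it suffices to bound the total number of point--circle incidences $I(\Lambda,\mathcal{C})$. The key structural input is that any two points of $\R^2$ lie on at most two circles of $\mathcal{C}$: a center of a unit circle through $p$ and $q$ lies on the perpendicular bisector of $pq$ and at distance $1$ from $p$, and a line meets a circle in at most two points. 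Thus $\mathcal{C}$ enjoys a ``two points determine $O(1)$ circles'' axiom, the metric analogue of the line--point axiom used after Proposition \ref{incwolffplandp}.

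First I would run the Sz\'ekely crossing-number argument (equivalently, the dual of the Szemer\'edi--Trotter proof sketched after Proposition \ref{incwolffplandp}): form the graph whose vertices are the points of $\Lambda$ and whose edges are the circular arcs joining consecutive incidences along each circle, bound its number of edges from below by $I(\Lambda,\mathcal{C})-O(N)$, bound its crossing number from above by $O(N^2)$ using that two circles meet in at most two points together with the bounded edge multiplicity coming from the two-rich property, and invoke the crossing-number inequality. This gives $I(\Lambda,\mathcal{C})\lesssim N^{4/3}$, hence at most $O(N^{4/3})$ unit distances unconditionally; the same output would follow from the conjectured Szemer\'edi--Trotter bound for circles mentioned above.

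The hard part is closing the gap from $N^{4/3}$ to $N^{1+\epsilon}$. The bound $N^{4/3}$ is sharp for \emph{general} two-rich families of circles, so any improvement must exploit that the circles in $\mathcal{C}$ are all translates of one fixed circle. The natural route is to import the rigidity behind the Guth--Katz resolution of the distinct-distances problem: parametrize by the group of planar rigid motions, encode a unit distance as a constraint surface in this parameter space, apply polynomial partitioning to control the incidences between these surfaces, and hope to force any near-extremal $\Lambda$ into strong arithmetic structure (a near-lattice, matching Erd\H{o}s's conjectured extremizers). I expect this last step to be the decisive obstacle: unlike the distinct-distances count, the unit-distance count is not governed by an incidence bound that current polynomial-method technology can reach, and no known mechanism separates congruent circles from arbitrary two-rich circle families strongly enough to break the $N^{4/3}$ barrier --- which is precisely why this remains a conjecture.
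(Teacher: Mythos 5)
The statement you are asked to prove is the Unit Distance Conjecture, which is a famous open problem of Erd\H{o}s; the paper does not prove it and does not claim to. It is stated explicitly as a conjecture, the surrounding text says that it is ``thought to be rather hard, and only partial results are known,'' and its only role in the paper is as a conditional hypothesis: in Section \ref{s8} it is observed that a positive answer to Question \ref{qqqq1} would \emph{follow} from the Unit Distance Conjecture, via exactly the kind of point--unit-circle incidence count you describe. So there is no ``paper proof'' to compare against, and any complete proof you produced would be a major new theorem rather than a reconstruction.

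Your write-up is internally honest about this, and the unconditional part of it is correct: the reduction of unit distances to incidences between $\Lambda$ and the family of $N$ unit circles centered at $\Lambda$, the observation that two points lie on at most two unit circles, and the Sz\'ekely crossing-number argument do yield the Spencer--Szemer\'edi--Trotter bound $O(N^{4/3})$. But that is where the proof stops. The final paragraph --- polynomial partitioning in the group of rigid motions \`a la Guth--Katz, forcing near-extremal configurations into lattice-like structure --- is a program, not an argument: no step of it is carried out, and as you yourself note, no known mechanism distinguishes congruent circles from general two-rich circle families sharply enough to beat $N^{4/3}$ (indeed the Guth--Katz method, applied to unit distances, is only known to recover roughly the same exponent). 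The gap between $N^{4/3}$ and $N^{1+\epsilon}$ is precisely the open problem, so the proposal should be read as a correct proof of the partial result $O(N^{4/3})$ together with an accurate explanation of why the conjecture itself remains out of reach --- not as a proof of the stated conjecture.
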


The point-circle and the unit distance conjectures are thought to be rather hard, and only partial results are known.

We now offer a first glimpse into Conjecture \ref{c3} by proving Theorem \ref{thmmmm4}. This will expose the role played by incidence theory in the discrete restriction phenomenon.

Recall $n=3$ and $\Lambda\subset P^2 $. The following parameter encodes the "additive geometry" of $\Lambda$
$$\upsilon:=\min\left\{|\eta_1+\eta_2-\eta_3-\eta_4|:\eta_i\in \Lambda\text{ and }|\eta_1+\eta_2-\eta_3-\eta_4|\not=0\right\}.$$
We show that Theorem \ref{thmmmm4} holds if $R\gtrsim \frac{|\Lambda|^2}{\upsilon}$. Fix such an $R$.

Using restricted type interpolation  it suffices to prove
$$\frac1{|B_R|}\int_{B_R}|\sum_{\eta\in\Lambda'} e(x\cdot\eta)|^{4}dx\lesssim_\epsilon |\Lambda'|^{2+\epsilon} ,$$
for each subset $\Lambda'\subset\Lambda$. See Section 6 in \cite{BD2} for details on this type of approach.

Expanding the $L^4$ norm we need to prove
$$|\sum_{\eta_i\in\Lambda'}\frac1{R^3}\int_{B_R}e((\eta_1+\eta_2-\eta_3-\eta_4)\cdot x)dx|\lesssim_\epsilon |\Lambda'|^{2+\epsilon} .$$
Note that if $A\not=0$
$$|\int_{-R}^{R}e(At)dt|\le A^{-1}.$$
Using this we get that
$$|\sum_{\eta_i\in\Lambda'\atop{|\eta_1+\eta_2-\eta_3-\eta_4|\not=0}}\frac1{R^3}\int_{B_R}e((\eta_1+\eta_2-\eta_3-\eta_4)\cdot x)dx|\le \frac{|\Lambda'|^4}{R\upsilon} \le |\Lambda'|^2.$$

Thus it suffices to prove the following estimate  for the additive energy
\begin{equation}
\label{e1}
\E_2(\Lambda')\lesssim_{\epsilon} |\Lambda'|^{2+\epsilon}.
\end{equation}
Assume
\begin{equation}
\label{EEE1}
\eta_1+\eta_2=\eta_3+\eta_4,
\end{equation}
with $\eta_i:=(\alpha_i,\beta_i,\alpha_i^2+\beta_i^2)$.
It has been observed in \cite{Bo3} that given $A,B,C\in\R$, the equality
$$\eta_1+\eta_2=(A,B,C)$$ implies that for $i\in\{1,2\}$
\begin{equation}
\label{EEE2}
(\alpha_i-\frac{A}{2})^2+(\beta_i-\frac{B}2)^2=\frac{2C-A^2-B^2}{4}.
\end{equation}
Thus the four points $P_i=(\alpha_i,\beta_i)$ corresponding to any additive quadruple \eqref{EEE1} must belong to a  circle. As observed in \cite{Bo3}, this is enough to conclude \eqref{e1} in the lattice case, as circles of radius $M$ contain $\lesssim_\epsilon M^\epsilon$ lattice points. The bound \eqref{e1} also follows immediately if one assumes the circle-point incidence conjecture.

We need however a new observation. Note that if \eqref{EEE1} holds then in fact both $P_1,P_2$ and $P_3,P_4$ are diametrically opposite on the circle \eqref{EEE2}. Thus each additive quadruple  gives rise to a distinct right angle, the one subtended by $P_1,P_2,P_3$ (say).
The estimate \eqref{e1} is then an immediate consequence of the following application of the Szemer\'edi-Trotter Theorem.
\begin{theorem}[Pach, Sharir, \cite{PS}]
\label{P-Sthm}
The number of repetitions of a given angle among $N$ points in the plane is $O(N^2\log N)$.
\end{theorem}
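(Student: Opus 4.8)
The final statement in the excerpt is Theorem \ref{P-Sthm} (Pach--Sharir): the number of repetitions of a given angle among $N$ points in the plane is $O(N^2 \log N)$. Here is how I would prove it.

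Fix the angle $\alpha \in (0,\pi)$, and fix a set $\P$ of $N$ points. We wish to bound the number of ordered triples $(A,B,C) \in \P^3$ with $\angle ABC = \alpha$. The plan is to charge each such triple to the vertex $B$ together with a pair of rays emanating from $B$, and then to reduce the counting to a point--line (or point--curve) incidence problem amenable to Szemer\'edi--Trotter. The key geometric observation is the following: if we fix the vertex $B$ and one of the two ``arms'' $A$, then the locus of admissible $C$ (so that the ray $BC$ makes angle $\alpha$ with the ray $BA$) is the union of \emph{two} rays from $B$ (one on each side). Dually, and more usefully, if we think of directed lines (or ``directions'') through $B$: the map sending a point $P \neq B$ to the direction of $\overrightarrow{BP}$ shows that a triple $\angle ABC = \alpha$ corresponds to a pair of directions at $B$ differing by $\alpha$.

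The cleanest route I would take is the standard one via Szemer\'edi--Trotter applied to a carefully chosen point--line configuration. For each point $B \in \P$, form the multiset of directed lines through $B$ passing through the other $N-1$ points of $\P$. Rotating all the ``incoming'' lines at $B$ by $\alpha$, a triple with vertex $B$ and $\angle ABC = \alpha$ becomes a pair consisting of an original line $\ell$ through $B$ and a rotated line $\ell'$ through $B$ that pass through a common point of $\P \setminus \{B\}$ — equivalently, an incidence structure. Summing the Szemer\'edi--Trotter bound $O(m + n + (mn)^{2/3})$ over all $B$, with $m = n = O(N)$ lines/points at each vertex, gives $O(N \cdot N^{4/3})$... which is too weak. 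So instead one must set this up globally rather than vertex-by-vertex: consider the set of $O(N^2)$ lines determined by pairs of points of $\P$, the set $\P$ itself, and count pairs $(\ell_1, \ell_2)$ of these lines meeting at angle $\alpha$ at a common point of $\P$. Grouping by the common vertex and using that at a fixed vertex the number of incident line-pairs at angle $\alpha$ is at most the number of incidences of the rotated family with $\P$, one applies Szemer\'edi--Trotter once to the rotated global configuration; the logarithmic factor enters from a dyadic decomposition over the number of points per line (or per vertex). This is precisely the argument in \cite{PS}, and I would simply invoke it — indeed the paper does exactly this, citing \cite{PS} as a black box.

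\textbf{Main obstacle.} The genuine difficulty is the dyadic bookkeeping that produces the $\log N$: one must partition vertices (or lines) according to how many points they are incident to, apply Szemer\'edi--Trotter on each dyadic class, and sum, checking that the geometric series of error terms telescopes to $O(N^2 \log N)$ rather than $O(N^{7/3})$. The subtlety is that a naive per-vertex application wastes the global structure; the efficiency comes from the Szemer\'edi--Trotter term $(mn)^{2/3}$ being subadditive in a way that, once the incidence is phrased globally with $m = O(N^2)$ lines and $n = O(N)$ points, yields $O(N^{4/3} \cdot N^{2/3}) = O(N^2)$ up to the logarithm. Since Theorem \ref{P-Sthm} is quoted verbatim from \cite{PS} in the excerpt, for the purposes of this paper no proof is needed — it is used as a cited input, and the contribution of the present section is only the reduction of \eqref{e1} to it via the right-angle observation derived from \eqref{EEE2}.
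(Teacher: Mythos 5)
The paper does not prove this statement at all: it is quoted verbatim as an external result of Pach and Sharir \cite{PS} and used as a black box, which is exactly how you ultimately treat it, so your approach matches the paper's. Your accompanying sketch (reduce angle-repetitions to incidences between the points and the family of $O(N^2)$ rotated lines, with a dyadic decomposition over line multiplicities supplying the $\log N$) is a fair outline of the actual Pach--Sharir argument, though as written it is only a sketch and the paper neither needs nor supplies these details.
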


It has been recognized that the restriction theory for the sphere and the paraboloid are very similar\footnote{A notable difference is the lattice case of the discrete restriction, but that has to do with a rather specialized scenario}. Consequently, one expects not only Theorem \ref{thmmmm4} to be true also for $S^2$, but for a very similar argument to work in that case, too. If that is indeed the case, it does not appear to be obvious. The same argument as above shows that an additive quadruple of points on $S^2$ will belong to a circle on $S^2$,
and moreover the four points will be diametrically opposite in pairs. There will thus be at least $\E_2(\Lambda)$ right angles in $\Lambda$. This is however of no use in this setting, as $\Lambda$ lives in three dimensions. It is proved in \cite{AS} that a set of $N$ points in $\R^3$ has $O(N^{7/3})$ right angles, and moreover this bound is tight in general.

Another idea is to map an additive quadruple to the plane  using the stereographic projection. The resulting four points will again belong to a circle, so the bound on the energy would follow if the circle-point incidence conjecture is proved. Unfortunately, the stereographic  projection does not preserve the property of being diametrically opposite and thus prevents the application of Theorem \ref{P-Sthm}. We thus ask
\begin{question}
Is it true that $\E_2(\Lambda)\lesssim_\epsilon|\Lambda|^{2+\epsilon}$ for each finite $\Lambda\subset S^2$?
\end{question}
One can ask the same question for $P^{n-1}$ and $S^{n-1}$ when $n\ge 4$. At least in the case of $\Lambda\subset P^{n-1}$, it is clear that the best one can hope for (just consider the lattice case, where a lower bound $\E_2(P^{n-1})$ can easily be derived)  is
\begin{equation}
\label{EEE4}
\E_2(\Lambda)\lesssim_\epsilon|\Lambda|^{\frac{3n-5}{n-1}+\epsilon}.
\end{equation}
Interestingly, this follows from the aforementioned result  \cite{AS} when $n=4$, and in fact no $\epsilon$ is needed this time. However, in the same paper \cite{AS} it is proved that this argument fails in  dimensions five and higher: there is a set with $N$ points in $\R^4$ which determines $\gtrsim N^3$ right angles. We point out that Theorem \ref{thmmmm2} (and its analogue for the sphere) implies \eqref{EEE4} for subsets of $P^{n-1}$ (and $S^{n-1}$) when $n\ge 4$, in the case when the points $\Lambda$ are $\sim |\Lambda|^{-1}$ separated.
\bigskip

It is also natural to investigate the two dimensional phenomenon. The approach outlined above for $n=3$  shows that Conjecture \ref{c3} is now equivalent with a positive answer to the following
\begin{question}
Is this true that for each $\delta$- separated $\Lambda\subset S$
$$\E_3(\Lambda)\lesssim_\epsilon\delta^{-\epsilon}|\Lambda|^{3}$$
for either $S=P^1$ or $S=S^1$?
\end{question}
One can be more bold and ask the more general question
\begin{question}
\label{qqqq1}
Is it true that for each $\Lambda\subset S$
$$\E_3(\Lambda)\lesssim_\epsilon|\Lambda|^{3+\epsilon}?$$
\end{question}
Surprisingly, these questions seem to be harder than their three dimensional counterparts solved above. The best that can be said with topological methods seems to be the following
\begin{proposition}For each $\Lambda\subset S$
$$\E_3(\Lambda)\lesssim_\epsilon|\Lambda|^{\frac72+\epsilon}.$$
\end{proposition}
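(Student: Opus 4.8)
The plan is to analyze $\E_3(\Lambda)=\sum_v r_3(v)^2$, where $r_3(v):=|\{(\lambda_1,\lambda_2,\lambda_3)\in\Lambda^3:\;\lambda_1+\lambda_2+\lambda_3=v\}|$, by a dyadic decomposition in the size of $r_3(v)$ together with the Szemer\'edi-Trotter theorem. The only fact about $S$ that will be used is this consequence of its curvature: for every $w$ there are at most two ordered pairs $(\lambda,\lambda')\in S^2$ with $\lambda+\lambda'=w$. (When $S=P^1$ this holds because $\alpha_1+\alpha_2$ and $\alpha_1^2+\alpha_2^2$ determine $\alpha_1\alpha_2$, hence the unordered pair $\{\alpha_1,\alpha_2\}$; when $S=S^1$ a chord of the circle is determined by its midpoint.) Two consequences: writing $\sigma(\pi):=\lambda+\lambda'$ for a two-element subset $\pi=\{\lambda,\lambda'\}\subset\Lambda$, the map $\pi\mapsto\sigma(\pi)$ is injective; and fixing $\lambda_1$ in a triple summing to $v$ forces $\{\lambda_2,\lambda_3\}$ to one of $\le 2$ possibilities, so $r_3(v)\le 2|\Lambda|$ for every $v$. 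Hence, with $V_t:=\{v:\;r_3(v)\ge t\}$, we have $\E_3(\Lambda)\lesssim\sum_{1\le t\le 2|\Lambda|,\ t\text{ dyadic}}t^2|V_t|$, a sum of $O(\log|\Lambda|)$ terms, and it suffices to prove $t^2|V_t|\lesssim|\Lambda|^{7/2}$ for each dyadic $t$.

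For $t\le|\Lambda|^{1/2}$ this is immediate from $\sum_v r_3(v)=|\Lambda|^3$, which gives $|V_t|\le|\Lambda|^3/t$ and hence $t^2|V_t|\le t|\Lambda|^3\le|\Lambda|^{7/2}$. For $t>|\Lambda|^{1/2}$ I set up an incidence problem. To each two-element subset $\pi\subset\Lambda$ associate the translated curve $C_\pi:=\sigma(\pi)+S$; by injectivity of $\sigma$ the $\binom{|\Lambda|}2$ curves $C_\pi$ are pairwise distinct. If $v\in V_t$, then among the $\ge t$ ordered triples $(\lambda_1,\lambda_2,\lambda_3)$ summing to $v$, the assignment $(\lambda_1,\lambda_2,\lambda_3)\mapsto\{\lambda_1,\lambda_2\}$ is at most $2$-to-$1$ (once $\{\lambda_1,\lambda_2\}$ is fixed, $\lambda_3=v-\sigma(\{\lambda_1,\lambda_2\})$ is determined), so these triples yield at least $t/2$ distinct two-element subsets $\pi$, each satisfying $v=\sigma(\pi)+\lambda_3\in C_\pi$; thus $v$ lies on at least $t/2$ of the distinct curves $C_\pi$. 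The family $\{C_\pi\}$ is, up to constants, a system of pseudolines: any two distinct members meet in $O(1)$ points, since two distinct translates of a fixed conic intersect in at most two points; and any two distinct points $v,v'$ lie on $O(1)$ common members, since $v,v'\in C_\pi$ forces $\sigma(\pi)\in(v-S)\cap(v'-S)$, a set of at most two elements, each of which equals $\sigma(\pi)$ for at most one $\pi$. For such curve families the Szemer\'edi-Trotter bound $I\lesssim(|P|\,|\mathcal C|)^{2/3}+|P|+|\mathcal C|$ holds (the crossing-number method applies verbatim), so with $P=V_t$, $\mathcal C=\{C_\pi\}$, $|\mathcal C|\sim|\Lambda|^2$, and $I\ge\tfrac t2|V_t|$, we obtain $|V_t|\lesssim|\Lambda|^4/t^3+|\Lambda|^2/t$, whence $t^2|V_t|\lesssim|\Lambda|^4/t+t|\Lambda|^2\le|\Lambda|^{7/2}+|\Lambda|^3$.

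Combining the two ranges gives $t^2|V_t|\lesssim|\Lambda|^{7/2}$ for every dyadic $t$, and summing over the $O(\log|\Lambda|)$ scales yields $\E_3(\Lambda)\lesssim|\Lambda|^{7/2}\log|\Lambda|\lesssim_\epsilon|\Lambda|^{7/2+\epsilon}$. The step I expect to require the most care is the verification that $\{C_\pi\}$ genuinely satisfies the ``two points lie on $O(1)$ curves'' axiom, since it is exactly here (and in the injectivity of $\sigma$) that the curvature of $S$ enters and that one is entitled to the clean exponent $2/3$ rather than the weaker bound available for arbitrary circles; for $S=S^1$ the curves $C_\pi$ are genuine equal-radius circles, for $S=P^1$ they are translates of a fixed parabola, and in both cases the pseudoline estimates are classical. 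Finally, the exponent $7/2$ is forced as the balance point of the trivial estimate $|V_t|\le|\Lambda|^3/t$ (good for $t\le|\Lambda|^{1/2}$) against the incidence estimate $|V_t|\lesssim|\Lambda|^4/t^3$ (good for $t\ge|\Lambda|^{1/2}$), which is presumably why this is the natural barrier for the purely topological approach.
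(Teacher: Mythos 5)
Your argument is correct and reaches the exponent $\frac72$ from the same two ingredients as the paper --- the Szemer\'edi-Trotter theorem for families of curves satisfying the two axioms (two curves meet in $O(1)$ points, two points lie on $O(1)$ common curves; Theorem 8.10 in \cite{TV}) combined with a trivial linear count and a dyadic pigeonholing --- but you set up the incidence problem in the dual way. The paper treats the \emph{pairs} as points (after the change of variables $(x_1,x_2)\mapsto(3(x_1+x_2),\sqrt{3}(x_1-x_2))$, which places all pairs contributing to a fixed sum $(n,j)$ on a common circle centered on the $x$-axis) and the \emph{sums} as curves, pigeonholing in the number of pairs per circle; you treat the sums $v$ as points and the pairs as the curves $C_\pi=\sigma(\pi)+S$, pigeonholing in $r_3(v)$. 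The numerology is identical ($\sim N^2$ objects on one side, total incidence mass $N^3$ from the trivial bound, the $2/3$ exponent from Szemer\'edi-Trotter, balance at richness $N^{1/2}$), but your version has two genuine advantages: it requires no change of variables, and it handles $P^1$ and $S^1$ uniformly --- for $S^1$ your curves are unit circles, so, exactly as the paper observes for its own formulation, the Unit Distance Conjecture would upgrade your bound to $|\Lambda|^{3+\epsilon}$. One degenerate case you should dispatch explicitly: a triple with $\lambda_1=\lambda_2$ produces a singleton rather than a two-element subset and hence contributes no curve $C_\pi$. This is harmless, because the same curvature computation you already use shows that for each $v$ there are at most two $\lambda\in S$ with $v-2\lambda\in S$ (for $P^1$ one solves the quadratic $3x^2-2v_1x+\frac{v_1^2-v_2}{2}=0$; for $S^1$ one intersects circles of radii $1$ and $\frac12$), so each $v\in V_t$ has at most two diagonal triples and still lies on at least $(t-2)/2$ distinct curves; alternatively their total contribution to $\E_3$ is $O(|\Lambda|^3)$ and can be discarded at the outset.
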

\begin{proof}
This was observed independently by Bombieri-Bourgain \cite{BoBo} when $S=S^1$ and by the author when $S=P^1$. The proofs are very similar, we briefly sketch the details for $S=P^1$. Let $N$ be the cardinality of $\Lambda$.
It goes back to \cite{Bo3} that if
\begin{equation}
\label{EEE5}
(x_1,x_1^2)+(x_2,x_2^2)+(x_3,x_3^2)=(n,j),
\end{equation}
then the point $(3(x_1+x_2),\sqrt{3}(x_1-x_2))$ belongs to the circle centered at $(2n,0)$ and of radius squared equal to $6j-2n^2$.  Note that there are $N^2$ such points with $(x_i,x_i^2)\in \Lambda$, call this set of points $T$. Assume we have $M_n$ such circles containing roughly $2^n$ points  $(3(x_1+x_2),\sqrt{3}(x_1-x_2))\in T$ in such a way that
\eqref{EEE5} is satisfied for some $x_3\in \Lambda$. Then clearly
$$\E_3(S)\lesssim \sum_{2^n\le N}M_n2^{2n}.$$

It is easy to see that
\begin{equation}
\label{EEE6}
M_n2^n\lesssim N^3,
\end{equation}
as each point in $T$ can belong to at most $N$ circles.

The nontrivial estimate is
\begin{equation}
\label{EEE7}
M_n2^{3n}\lesssim N^4,
\end{equation}
which is an immediate consequence of the Szemer\'edi-Trotter Theorem for curves  satisfying the following two fundamental axioms: two curves intersect in $O(1)$ points, and there are $O(1)$ curves passing through any two given points. The number of incidences between such curves and points is the same as in the case of lines and points, see for example Theorem 8.10 in \cite{TV}. Note that since our circles have centers on the $x$ axis, any two points in $T$ sitting in the upper (or lower) half plane determine a unique circle. Combining the two inequalities we get for each $n$
$$M_n2^{2n}\lesssim N^{\frac72}.$$
\end{proof}
\bigskip

In the case when $\Lambda\subset S^1$, the same argument leads to incidences between unit circles and points. The outcome is the same, since for any two points there are two unit circles passing through them. An interesting observation is the fact that Question \ref{qqqq1} has a positive answer if the Unit Distance Conjecture is assumed. Indeed, the argument above presents us with a collection $T$ of $N^2$ points and a collection  of $\lesssim N^3$ unit circles. For $2^n\lesssim N$ let $M_n$ be the number of such circles  with $\sim 2^n$ points. There will be at least $M_n2^n$ unit distances among the $N^2$ points and the $M_n$ centers. The Unit Distances Conjecture forces $M_n2^n\lesssim_\epsilon(M_n+N^2)^{1+\epsilon}$. Since $M_n\lesssim N^3$, it immediately follows that $M_n2^{2n}\lesssim_\epsilon N^{3+\epsilon}$ which gives the desired bound on the energy.

It seems likely that in order to achieve the conjectured bound on $\E_3(\Lambda)$, the structure of $T$ must be exploited, paving the way to algebraic methods. One possibility is to make use of the fact that $T$ has sumset structure.   Another interesting angle for the parabola is the following. Recall that whenever \eqref{EEE5} holds, the three points $(3(x_i+x_j),\sqrt{3}(x_i-x_j))$, $(i,j)\in\{(1,2),(2,3),(3,1)\}$, belong to the circle centered at $(2n,0)$ and of radius squared equal to $6j-2n^2$. One can easily check that if fact they form an equilateral triangle! This potentially opens up the new toolbox of symmetries, since, for example, the rotation by $\pi/3$ about the center of any such circle $C$ will preserve $C\cap T$.
\bigskip

Another interesting question is whether there is a soft(er) argument for Theorem \ref{thmmmm2}, one that does not go through the hard analysis of Theorem \ref{thmmm1}. In particular, it seems tempting to search for  a proof that relies mainly on incidence theory of some of the consequences of Theorem \ref{thmmmm2} when $p$ is an even integer. For example,  when $n=2$ and $p=8$ Theorem \ref{thmmmm2} shows that
$$\E_4(\Lambda)\lesssim_{\epsilon} N^{5+\epsilon},$$
for each $\frac1N$- separated set on $P^1$. An argument like the one in Proposition \ref{qqqq1} relying on two applications of Szemer\'edi-Trotter produces the upper bound $N^{5\frac14+\epsilon}$ without any assumption on separation.

We close by mentioning that there are similar interesting questions for the cone, but we have decided to not explore them here.


\begin{thebibliography}{99}
\bibitem{AS} Appelbaum, R. and  Sharir, M. {\em  Repeated angles in three and four dimensions} SIAM J. Discrete Math. 19 (2005), no. 2, 294-300 (electronic)
\bibitem{BoBo}   Bombieri, E. and Bourgain, J. {\em A problem on sums of two squares}, to appear in IMRN
\bibitem{Bo1} Bourgain, J. {\em Eigenfunction bounds for the Laplacian on the n-torus}, Internat. Math. Res. Notices (1993), no. 3, 61-66.
\bibitem{Bo4} Bourgain, J. {\em On Strichartz’s inequalities and the nonlinear Schr\"odinger equation on irrational tori}, Mathematical aspects of nonlinear dispersive equations, 1-20, Ann. of Math. Stud., 163, Princeton Univ. Press, Princeton, NJ, 2007
\bibitem{Bo3} Bourgain, J. {\em   Fourier transform restriction phenomena for certain lattice subsets and applications to nonlinear evolution equations. I. Schr\"odinger equations}, Geom. Funct. Anal. 3 (1993), no. 2, 107-156
\bibitem{Bo2} Bourgain, J. {\em Moment inequalities for trigonometric polynomials with spectrum in curved hypersurfaces},  Israel J. Math. 193 (2013), no. 1, 441-458.
\bibitem{BD1} Bourgain, J. and Demeter, C. {\em Improved estimates for the  discrete Fourier restriction to the higher dimensional sphere}, preprint (2012) available on arvix, to appear in Illinois Journal of Mathematics.
\bibitem{BD2} Bourgain, J. and Demeter, C. {\em New bounds for the  discrete Fourier restriction to the sphere in four and five dimensions}, preprint
\bibitem{CWW} Catoire, F. and  Wang, W.-M. {\em Bounds on Sobolev norms for the defocusing nonlinear Schr\"odinger equation on general flat tori}, Commun. Pure Appl. Anal. 9 (2010), 483-491.
\bibitem{demirb} Demirbas, S. {\em Local well-posedness for 2-d Schr\"odinger equation on irrational tori and bounds on Sobolev norms}, arXiv:1307.0051
\bibitem{GSS} Garrig\'os, G., Schlag, W. and Seeger, A. {\em Improvements in Wolff's inequality for decompositions of cone multipliers}, preprint available online
\bibitem{GOW} Guo, Z., Oh, T., Wang, Y. {\em Strichartz estimates for Schödinger equations on irrational tori},  arXiv:1306.4973
\bibitem{LW} $\L$aba, I. and  Wolff, T. {\em A local smoothing estimate in higher dimensions}, Dedicated to the memory of Tom Wolff. J. Anal. Math. 88 (2002), 149–171
\bibitem{PS} J. Pach and M. Sharir, Repeated angles in the plane and related problems, J. Combin. Theory
Ser. A, 59 (1992), pp. 12-22.
\bibitem{ST} Szemer\'edi, E. and Trotter, W. T. Jr. {\em Extremal problems in discrete geometry}, Combinatorica 3 (1983), no. 3-4, 381-392
\bibitem{TV} Tao, T. and Vu, V. {\em Additive combinatorics}, Cambridge Studies in Advanced Mathematics, 105. Cambridge University Press, Cambridge, 2006. xviii+512 pp.
\bibitem{TWol} Wolff, T. {\em Local smoothing type estimates on $L^p$ for large $p$}, Geom. Funct. Anal. 10 (2000), no. 5, 1237-1288
\end{thebibliography}
\end{document}